\newcommand\org@maketitle{}
\newcommand\@authors{}
\let\org@maketitle\maketitle
\def\maketitle{%
	\let\@authors\authors
	\nxandlist{; }{ and }{; }\@authors
	\hypersetup{
		linktocpage=true,
		pdftitle={\@title},
                pdfauthor={\@authors},
                pdfsubject={\subjclassname. \@subjclass},
		pdfkeywords={\@keywords}
	}%
	\org@maketitle
}
\renewcommand{\PrintDOI}[1]{\doi{#1}}
\numberwithin{equation}{section}
\newtheorem{theorem}{Theorem}[section]
\newtheorem{lemma}[theorem]{Lemma}
\newtheorem{corollary}[theorem]{Corollary}
\newtheorem{proposition}[theorem]{Proposition}
\theoremstyle{definition}
\newtheorem{definition}[theorem]{Definition}
\newtheorem{example}[theorem]{Example}
\newtheorem{remark}[theorem]{Remark}
\newcommand{\Mb}{\mathbb{M}}
\newcommand{\R}{\mathbb{R}}
\newcommand{\eps}{\varepsilon}
\def\XXint#1#2#3{{\setbox0=\hbox{$#1{#2#3}{\int}$}
    \vcenter{\hbox{$#2#3$}}\kern-.5\wd0}}
\newcommand{\beq}{\begin{equation}}
\newcommand{\eeq}{\end{equation}}
\mathchardef\ordinarycolon\mathcode`\:
\author{Alessandra De Luca}
\address{Alessandra De Luca\newline\indent
Dipartimento di Matematica e Applicazioni
\newline\indent
Universit\`a degli Studi di Milano-Bicocca
\newline\indent
Via Cozzi 55, 20125, Milano, Italy   }
\email{alessandra.deluca@unimib.it}
\author{Matteo Muratori}
\address{Matteo Muratori\newline\indent
Dipartimento di Matematica 
\newline\indent
Politecnico di Milano
\newline\indent
Piazza Leonardo da
Vinci 32, 20133, Milano, Italy  }
\email{matteo.muratori@polimi.it}
\author{Nicola Soave}
\address{Nicola Soave\newline\indent
Dipartimento di Matematica ``Giuseppe Peano''
\newline\indent
Universit\`a degli Studi di Torino
\newline\indent
Via Carlo Alberto 10, 10123, Torino, Italy   }
\email{nicola.soave@unito.it}
    \subjclass[2020]{Primary: 35A01, 35A15, 35A24, 35J61. Secondary: 34A12, 34A34, 34C40, 34D05, 58J05, 58J70.}
\keywords{Lane-Emden equation; subcritical Sobolev exponent; non-existence results; model manifolds; negative curvature; polynomial volume growth.}
\date{}
\title[]{On the subcritical Lane-Emden equation \\ on Riemannian models with polynomial volume growth}
\begin{document}

\begin{abstract}
    We focus on the problems of existence and non-existence of positive solutions for the Sobolev-subcritical Lane–Emden equation on certain Riemannian manifolds (mainly models) with asymptotically negative curvature, which, from the viewpoint of the volume growth of geodesic balls, can be regarded as intermediate settings between the Euclidean and the hyperbolic spaces.
    
    A number of interesting phenomena arise: the subcritical regime naturally divides into three further ranges, characterized by existence phenomena (slightly subcritical), non-existence phenomena (strongly subcritical), and by a mixed behavior where existence and non-existence strongly depend on additional assumptions on the manifold (intermediate). In the intermediate regime, we further show that the radial homogeneous Dirichlet problem in geodesics balls may admit multiple positive solutions, thereby revealing substantial differences with respect to both the Euclidean and the hyperbolic settings.
\end{abstract}

\maketitle

\section{Introduction}
We consider positive solutions of the \emph{Lane-Emden equation} 
\beq\label{LE}
-\Delta u = u^{q}, \quad u>0 \qquad \text{on $\Mb^n$},
\eeq
where $q>1$ and $\Mb^n$ is a complete, noncompact, $n$-dimensional Riemannian manifold, and $\Delta$ denotes the corresponding Laplace-Beltrami operator. The Lane-Emden equation serves as a fundamental prototype for semilinear elliptic equations, with significant applications both in geometry and physics. Its study has played a crucial role in the development of various techniques in nonlinear analysis.

The Euclidean case is by now well understood: when $\Mb^n \equiv \R^n$ (with the standard Euclidean flat metric) and the problem is \emph{subcritical}, \emph{i.e.}\ $1<q<2^*-1$, Gidas and Spruck proved in \cite{GiSp} that \eqref{LE} has no positive solutions (see also \cite{CheLi} for an alternative proof). We recall that $2^*$ stands for the critical exponent for the Sobolev embeddings, that is $2^*=2n/(n-2)$ for $n \ge 3$ and $2^*=\infty$ for $n=2$. On the other hand, in the critical case $q=2^*-1$ the equation has an $(n+1)$-parameter family of explicit solutions, the celebrated \emph{Aubin-Talenti functions}. These functions arise as extremals of the Sobolev inequality associated with the continuous embedding $D^{1,2}(\R^n) \hookrightarrow L^{2^*}(\R^n)$ and constitute the only positive solutions of the equation, as established by Caffarelli, Gidas, and Spruck \cite{CaGiSp} (see again \cite{CheLi} for alternative proofs). In the supercritical regime $q>2^*-1$, positive radial solutions still exist, although they do not belong to the natural energy space $D^{1,2}(\R^n)$: see \cite{RuSt}, \cite[Chapter I.9]{QS}, and references therein. We recall that $D^{1,2}(\R^n)$ denotes the closure of $C^1_c(\R^n)$ with respect to the norm 
\[
\| u \|_{D^{1,2}(\R^n)} := \left(\int_{\R^n} |\nabla u|^2\,dx \right)^\frac12 = \|\nabla u\|_{L^2(\R^n)} \, .
\]
This definition naturally extends to a Riemannian manifolds.

Regarding equation \eqref{LE} in non-flat geometric settings, we preliminarily observe that the sign of the curvature plays a significant role. In fact, on any complete noncompact Riemannian manifold $\Mb^n$ with \emph{non-negative Ricci curvature}, the situation is reminiscent of the Euclidean case: in the subcritical case \eqref{LE} does not have positive solutions \cite{GiSp} and, if the critical equation on $\Mb^n$ has a positive solution $u$ (satisfying mild additional assumptions, such as $u \in D^{1,2}(\Mb^n$), or $u$ not growing too fast at infinity), then $\Mb^n$ is necessarily isometric to $\R^n$, and $u$ is an Aubin-Talenti function. Namely, we have full rigidity for both the manifold and the solution. We refer to the recent paper \cite{CaMo} for the precise statements. See also \cite{BaKr, CiFaPo, FoMaMa} for related results.

In contrast, the problem reveals new features on negatively-curved manifolds. For the sake of simplicity, in this discussion we focus in particular on \emph{Cartan-Hadamard manifolds}, which are complete, noncompact, and simply connected Riemannian manifolds with nonpositive sectional curvatures at every point. A prototypical example is the hyperbolic space $\mathbb{H}^n$, where sectional curvatures are negative and constant. It is well known that any Cartan-Hadamard manifold $ \Mb^n $ is diffeomorphic to $\R^n$, and the exponential map centered at any point is a global diffeomorphism  (see Subsection \ref{bkg} ahead). This allows one to introduce \emph{polar coordinates} centered at any point, and makes the search for \emph{radial solutions} to \eqref{LE} especially meaningful. However, we point out that most of the results we will present are valid in a more general setting, and the Cartan-Hadamard structure will be used rarely.

Among the first significant results in the negatively curved setting, Mancini and Sandeep \cite{MaSa} proved that the subcritical Lane-Emden equation on the hyperbolic space admits solutions. Specifically, there exists a unique positive radial solution in $H^1(\mathbb{H}^n)$. Furthermore, there exist infinitely-many positive radial solutions with infinite energy, see \cite[Theorem 2.3]{BoGaGrVa}. The existence of a finite-energy solution is strictly connected with the validity of a \emph{Poincar\'e inequality} in the whole space $\mathbb{H}^n$, that is, the presence of a \emph{spectral gap} for the Laplacian spectrum. This contrasts sharply with the Euclidean case, where it is well known that \( \lambda_1(-\Delta,\mathbb{R}^n) = 0 \), and reflects the exponential volume growth of balls in \(\mathbb{H}^n\) as opposed to the polynomial growth in \(\mathbb{R}^n\). A generalization of such results to a broader class of negatively-curved manifolds was provided in \cite[Theorem 2.5]{BeFeGr}: roughly speaking, if \(\Mb^n\) is a \emph{model manifold} (not necessarily of Cartan-Hadamard type) with \(\lambda_1(-\Delta,\Mb^n) > 0\), then the subcritical Lane-Emden equation on \(\Mb^n\) admits a positive (radial) solution in $H^1(\Mb^n)$. The precise definition of a model manifold will be recalled in the forthcoming Subsection \ref{bkg}.

With regards to the critical and supercritical regimes, it turns out that the Lane-Emden equation admits infinitely-many positive radial solutions on any Cartan-Hadamard model manifold \cite{BeFeGr, MuSo}. However, if a positive radial solution $u$ exists with \(\nabla u \in L^2(\Mb^n)\), then \(\Mb^n\) must be isometric to \(\mathbb{R}^n\), $q=2^*-1$, and \(u\) must be an Aubin-Talenti function \cite{MuSo}, which restores a form of rigidity as in the non-negatively curved case. A challenging open problem consists in proving (or disproving) that, for the critical and supercritical cases, any positive solution is necessarily radial. This is known only under additional assumptions, such as the validity of the \emph{Cartan-Hadamard conjecture} (currently settled for \(n \leq 4\)), and when \(u\) is also an extremal of the Sobolev inequality, see \cite[Theorem 1.1]{MuSo}. Successful techniques from the Euclidean case, such as the moving planes method, are generally inapplicable to manifolds, except under restrictive geometric conditions, as explored in \cite{AlDaGe}. Moreover, recent integral estimates developed in \cite{CaMo, CiFaPo, FoMaMa} do not seem to be suitable for handling negative curvatures (at least without strong restrictions). We refer to \cite[Remark 1.6 (vi)]{CiFaPo} for a rigidity result on manifolds with possibly nonpositive curvature somewhere; note, however, that such an example does not cover Cartan-Hadamard manifolds and appears to be very peculiar.

In the present paper, we focus on the subcritical case and address a question that, to our knowledge, remains open in its full generality: what happens when \(\Mb^n\) has nonpositive curvature and \(\lambda_1(-\Delta,\Mb^n) = 0\)? Such a geometric setting can be seen, to some extent, as an interpolation between the Euclidean and the hyperbolic cases, and, because of the lack of a spectral gap, it may be tempting to expect a Euclidean-type scenario (for instance, this was suggested in \cite{BeFeGr}). On the contrary, we will demonstrate that entirely new phenomena emerge, including the appearance of two additional critical exponents beyond the classical Sobolev one, which turn out to be very sensitive to the asymptotic volume growth of geodesic balls (assumed to be polynomial -- see the comments below Remark \ref{rem: reg M}).

\subsection{Geometric background and main results}\label{bkg}

In order to state the main results of the paper, we first need to set some notation, recall basic definitions, and introduce our geometric framework. 

Given a complete and noncompact Riemannian manifold $ \Mb^n $, we let $ dV $ denote its Riemannian volume measure and $ \mathrm{d}(\cdot,\cdot) $ the geodesic distance on $ \Mb^n $. For some fixed $ o \in \Mb^n $, which will often play the role of a \emph{pole}, and $R>0$, we set
$$
B_R(o) := \left\{ x \in \Mb^n : \ \mathrm{d}(x,o) < R \right\} ,
$$
that is, the (open) geodesic ball of radius $R$ centered at $o$. When no ambiguity occurs, we will simply write $ B_R $. Also, we let $ \partial B_R(o) $ (or $ \partial B_R $) denote the boundary of $ B_R(o) $ or, equivalently, the set of points at distance $ R $ from $o$. The function 
\beq\label{polrad}
r(x) := \mathrm{d}(x,o) \qquad \forall x \in \Mb^n \setminus \{o\}
\eeq 
is always $1$-Lipschitz regular. Moreover, it is smooth in the set $ \Mb^n \setminus \mathrm{cut}(o) \setminus \{o\} $, where $ \mathrm{cut}(o) $ is the \emph{cut locus} of $ o $, namely the set of points $ x \in \Mb^n \setminus \{ o \} $ at which the geodesic connecting $ o $ to $ x $ ceases to be uniquely minimizing; remarkably, thanks to a well-known result, we have that $ V\!\left( \mathrm{cut}(o) \right) = 0 $ (for more details see \emph{e.g.}~\cite[Subsection 1.2.2]{MRS} and references therein). 

Upon denoting by $ T_o \Mb^n $ the \emph{tangent space} of $ \Mb^n $ at $o$, we recall that for every $ \mathsf{v} \in T_o \Mb^n $ the \emph{exponential map} $ \exp_o (\mathsf{v}) $ is defined as $ \gamma_\mathsf{v}(1) $, where $\gamma_\mathsf{v}$ is the unique geodesic such that $ \gamma_\mathsf{v}(0)=o $ and $ \gamma_\mathsf{v}'(0)=\mathsf{v} $. The exponential map is always well defined locally, that is, for vectors $\mathsf{v}$ with a small norm, and it is actually a \emph{local diffeomorphism}. In general, however, it fails to be a global diffeomorphism: this is precisely related to the fact that the cut locus of $ o $ may not be empty. Nevertheless, there are some important classes of manifolds for which it is indeed a global diffeomorphism; if that happens for a specific $ o \in \Mb^n $, then $ \Mb^n $ is called a \emph{manifold with a pole}, the pole being precisely $ o $ (we refer to the celebrated monograph \cite{GrWu}). On any such a manifold, we can therefore introduce the (global) \emph{polar coordinates} about the pole by defining, for every $x\in \Mb^n\setminus \{o\}$, the corresponding polar radius $r \equiv r(x) $ as the geodesic distance of $x$ from $o$ (recall \eqref{polrad}), and the polar angle $\theta \equiv \theta(x)\in \mathbb{S}^{n-1}$ representing the starting direction, in the tangent space $T_o \Mb^n$, of the (unique) geodesic, emanating from $o$, that realizes such a distance. In particular, we can always write the metric $ \mathsf{g} $ of $ \Mb^n $ as 
\beq \label{metr-pole}
\mathsf{g} \equiv dr^2 + \mathsf{A}_r \, ,
\eeq
where $ dr $ stands for the radial component of a vector in $ T_x \Mb^n $, namely the one in the direction of $\nabla r(x)$, and $ \mathsf{A}_r $ is a suitable metric on the unit sphere $ \mathbb{S}^{n-1} $ or, equivalently, on the geodesic sphere $ \partial B_r $. Moreover, in this special coordinate frame, the \emph{Laplace-Beltrami} operator (Laplacian for short) reads 
\begin{equation}\label{L-B}
    \Delta = \frac{\partial^2}{\partial r^2}+ m(r,\theta) \, \frac{\partial}{\partial r}+ \Delta_{\partial B_r} \, ,
\end{equation}
where $\Delta_{\partial B_r}$ is the Laplace-Beltrami operator on  $\partial B_r$ and $m(r,\theta)$ is a smooth function on $(0,+\infty)\times \mathbb{S} ^{n-1}$ that can explicitly be related to $A_r$, and in fact coincides with $ \Delta r $. 

Regarding curvatures, we will only need to work with \emph{radial} Ricci and sectional curvatures. Specifically, the \emph{radial Ricci curvature} at $ x \in \Mb^n \setminus \{ o \} $, denoted by $ \mathrm{Ric}_o(x) $, is the Ricci curvature at $x$ evaluated in the radial direction $ \nabla r(x) $. Similarly, the \emph{radial sectional curvature}, denoted by $ \mathrm{Sec}_{\omega}(x) $, is the sectional curvature of $ \Mb^n $ at $ x \in \Mb^n \setminus \{ o \} $ corresponding to any $2$-plane $\omega \subset T_x \Mb^n $ that contains the radial direction. In particular, $ \mathrm{Ric}_o(x) $ is the sum of $ n-1 $ radial sectional curvatures associated with mutually orthogonal $2$-planes. Note that, even if $ \Mb^n $ is not a manifold with a pole, radial curvatures are still well defined \emph{outside} of the cut locus.

A remarkable class of Riemannian manifolds with a pole is formed by \emph{Cartan-Hadamard manifolds}, \emph{i.e.}, complete and simply connected Riemannian manifolds with everywhere nonpositive sectional curvatures. Another important class of manifolds with a pole comprises (noncompact)  \emph{Riemannian models} (or \emph{model manifolds}), which are at the core of the present paper. Prior to recalling what a model manifold is, it is convenient to introduce the following set of real functions. 

\begin{definition}\label{def:mf}
   We say that $ \psi : [0,+\infty) \to [0,+\infty) $  is a \emph{model function} if $\psi \in C^\infty\!\left( [0,+\infty) \right) $, $ \psi>0 $ in $ (0,+\infty) $,
   \begin{equation}\label{hp 1 psi}
\psi(0)=0 \qquad \text{and} \qquad\psi'(0)=1 \, ,
\end{equation}
 and 
\beq\label{hp 1 psi'}\psi^{(2k)}(0)=0 \qquad \forall k \in \mathbb{N} \, .
\eeq   
\end{definition}
In most of the paper we restrict to the case of (noncompact) \emph{model manifolds}, that is, special manifolds with a pole $o \in \Mb^n$ such that the metric is given, in polar (or spherical) global coordinates about $o$, by
\[
\mathsf{g}=dr^2 + \psi^2(r) \, \mathsf{g}_{\mathbb{S}^{n-1}} \, ,
\]
for some model function $\psi$ complying with Definition \ref{def:mf}, where $ \mathsf{g}_{\mathbb{S}^{n-1}}$ stands for the usual round metric on the unit sphere $\mathbb{S}^{n-1}$ (see \emph{e.g.}\ \cite[Chapter 2]{GrWu}). In other words, the metric $ \mathsf{A}_r $ in \eqref{metr-pole} is conformal to the round metric on $ \mathbb{S}^{n-1} $ for every $ r>0 $.  It is worth mentioning that the choice $\psi(r)=r$ corresponds to the Euclidean space $\mathbb{R}^n$, while $\psi(r)=\sinh r$ is the model function associated with the hyperbolic space $\mathbb{H}^n$. Also, the radial sectional and Ricci curvatures read
\beq\label{sec-rad}
\mathrm{Sec}_\omega (x)  = -\frac{\psi''(r)}{\psi(r)} \qquad \forall x \equiv (r,\theta) \setminus \{ o \} 
\eeq
and 
$$
 \mathrm{Ric}_o(x) = -(n-1) \, \frac{\psi''(r)}{\psi(r)} \qquad \forall x \equiv (r,\theta) \setminus \{ o \} \, ,
$$
respectively. In particular, from \eqref{sec-rad} (and the expression for non-radial curvatures -- see \cite[Subsection 2.2]{GMV} and references therein) we can infer that a noncompact model manifold is Cartan-Hadamard if and only if $ \psi $ is \emph{convex}. Moreover, the function $ m(r,\theta) $ in \eqref{L-B} is purely radial and explicitly related to $\psi$ by the formula
$$
m(r,\theta)=(n-1) \, \frac{\psi'(r)}{\psi(r)} \, ,
$$
whereas
$$
\Delta_{S_r} = \frac{1}{\psi^2(r)} \, \Delta_{\mathbb{S}^{n-1}} \, .
$$
Hence, in the special case when $ f \equiv f(r) $ is a purely \emph{radial function}, it holds
\begin{equation}\label{lap-rad}
  \Delta f  = f'' + (n-1) \, \frac{\psi'}{\psi} \, f' \, .
\end{equation}
We will often deal with radial functions: whenever no ambiguity occurs, we will interpret them both as functions of $ x \in \Mb^n$ via $r(x)$ and as one-variable functions of $ r \in [0,+\infty) $.

\begin{remark}\label{rem: reg M}
Most of the analysis we will carry out actually works under the weaker assumptions that $ \psi \in C^1([0,+\infty)) \cap  C^\infty ((0,+\infty)) $ and satisfies \eqref{hp 1 psi}. Such conditions ensure that the corresponding model manifold $\Mb^n$ is $C^1$ near $ o $. More in general, $ \Mb^n $ is $C^m$ near $ o $ if $\psi \in C^m([0,+\infty))$ and \eqref{hp 1 psi'} (along with \eqref{hp 1 psi}) holds for all $ k \in \mathbb{N} $ with $ 2k \le m  $, namely derivatives of even order up to $m$ must vanish at $r=0$.
\end{remark}

We have already anticipated that in \cite{BeFeGr} the authors considered the Lane-Emden equation on certain model manifolds, and proved existence of an $H^1$ positive solution under the condition $\lambda_1\!\left(-\Delta, \Mb^n\right)>0$. The precise set of hypotheses made in \cite{BeFeGr} is the following: in addition to \eqref{hp 1 psi}, the authors required \eqref{hp 1 psi'} for $k=1$, 
\[
\psi'>0 \quad \text{in $[0,+\infty)$} \qquad \text{and} \qquad \liminf_{r \to +\infty} \frac{\psi'(r)}{\psi(r)} \in (0,+\infty] \, .
\]
Such assumptions allow them to show that the first Laplacian eigenvalue on $\Mb^n$ is indeed positive \cite[Lemma 4.1]{BeFeGr}, which, together with the compactness of the embedding $H^1_{\mathrm{rad}}(\Mb^n) \hookrightarrow L^{p}(\Mb^n)$ for every $2<p<2^*$, ensures existence of (radial) solutions to \eqref{LE} for every $ q \in (1,2^*-1)$ via classical variational methods.

In what follows, we focus instead on cases when 
\beq\label{hp 2 psi}
\lim_{r \to +\infty} \frac{\psi'(r)}{\psi(r)}=0 \, ,
\eeq
which, in particular, implies that $\lambda_1\!\left(-\Delta,\Mb^n\right)=0$ (see again \cite[Lemma 4.1]{BeFeGr}). As we will see, under \eqref{hp 2 psi}, the existence or non-existence of solutions to \eqref{LE} is very sensitive to the rate at which $ \psi $ grows at infinity. For that reason, we will often assume that at least one of the following extra conditions holds:
\beq\label{hp 00 psi}
\exists \, \alpha >1 \, , \ \kappa_1,\kappa_2 >0: \quad \kappa_1 \, r^{\alpha} \le \psi(r) \le \kappa_2 \, r^{\alpha} \qquad \text{for all $r$ large enough} \, ,
\eeq
\beq\label{hp 0 psi}
\exists \, \alpha >1 \, , \ \kappa >0: \quad \psi(r) \sim \kappa \, r^{\alpha} \qquad \text{as $r \to +\infty$} \, ,
\eeq
\beq\label{hp 3 psi}
\exists \, \alpha >1 \, , \ \kappa>0: \quad \psi'(r) \sim \kappa \alpha  \,  r^{\alpha-1} \qquad \text{as $r \to +\infty$} \, ,
\eeq
where $f(r) \sim g(r)$ means that $f(r)/g(r) \to 1$ as $r \to +\infty$. Clearly, the above conditions are increasingly more restrictive, and  under \eqref{hp 3 psi} we have that \eqref{hp 2 psi} holds with a specific rate:
\begin{equation}\label{psi'psi}
  \frac{\psi'(r)}{\psi(r)} \sim \frac{\alpha}{r} \rightarrow 0 \qquad \text{as $r \to +\infty$} \, .  
\end{equation}
Moreover, any of them entails that the volume of geodesics balls centered at the pole $o$ (or at any other point) satisfies
\[
V(B_R) = O\big(R^{\alpha(n-1)+1}\big) \qquad \text{as $R \to +\infty$} \, ,
\]
that is, an intermediate growth between the Euclidean one $O(R^n)$ and the hyperbolic one $O\big(e^{(n-1)R}\big)$.

Somehow surprisingly, it is indeed necessary to prescribe such a behavior, since for any value of $\alpha>1$ we can identify three different regimes for the existence or non-existence of solutions to \eqref{LE}, which strongly depend on $\alpha$. To this end, it is convenient to introduce the following \emph{new critical exponents}, in addition to $2^*$:
\[
\tilde 2_\alpha := \frac{\alpha(n-1)+1}{\alpha(n-1)-1} \qquad \text{and} \qquad
2_\alpha^* := \frac{2(\alpha(n-1)+1)}{\alpha(n-1)-1} = 2\cdot \tilde 2_\alpha \, .
\]
Here and throughout the rest of the paper, we will implicitly assume that $n \ge 2$ (note that both $ \tilde 2_\alpha $ and $ 2^*_\alpha $ are finite numbers).

\smallskip

We are now in a position to state all of our main results. For the reader's convenience, we split them into three paragraphs, according to the following denomination of the regimes depending on $q$:
$$
\begin{cases}
q \in \left( 1 , \tilde 2_\alpha \right] & \Rightarrow \ \  \text{\it strongly subcritical} \, , \\
q \in \left(2^*_\alpha-1 , 2^*-1 \right) & \Rightarrow \ \  \text{\it sightly subcritical} \, , \\ 
q \in \left( \tilde 2_\alpha , 2^*_\alpha-1 \right] & \Rightarrow \ \ \text{\it intermediate} \, .
\end{cases}
$$
Note that the borderline case $q=2^*_\alpha-1$ has been formally assigned to the intermediate regime, but at some points it will be convenient to treat it as an element of the slightly subcritical regime as well.

\subsection*{The \emph{strongly subcritical} regime} If $q$ is less than or equal to $ \tilde 2_\alpha $, then not only \eqref{LE} does not have positive radial solutions, but even positive supersolutions, \emph{not necessarily radial}, do not exist.

\begin{theorem}\label{thm: supersol}
Let $ \alpha>1 $. Let $ \Mb^n $ be a model manifold associated with a model function $\psi$ satisfying
\begin{equation}\label{bound-lap 0}
\frac{\psi'(r)}{\psi(r)} \le \frac{K}{r}  \qquad \forall r>0
\end{equation}
and
\begin{equation}\label{bound-psi}
\psi(r) \le K \, r^\alpha \qquad \forall r \ge r_0 \, ,
\end{equation}
for some $K,r_0>0$. Then, for every $q \in \left(1,\tilde 2_\alpha \right]$, the Lane-Emden equation \eqref{LE} does not have nonnegative nontrivial supersolutions; namely, if $u \ge 0$ satisfies
\begin{equation}\label{distrib}
-\Delta u \ge u^q \qquad \text{on $\Mb^n$}
\end{equation}
in the sense of distributions, then $u \equiv 0$.
\end{theorem}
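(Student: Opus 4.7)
The result is a Liouville-type non-existence theorem in the spirit of Gidas--Spruck and Mitidieri--Pohozaev, and the natural tool is the classical test-function method adapted to our Riemannian setting. The key heuristic is that, under \eqref{bound-lap 0}--\eqref{bound-psi}, geodesic annuli on $\Mb^n$ have volume $O(R^N)$ and the Laplacian acts on radial cutoffs exactly as on $\mathbb{R}^N$, with ``effective dimension'' $N := \alpha(n-1)+1$; the exponent $\tilde 2_\alpha = N/(N-2)$ then emerges as the Serrin critical exponent in dimension $N$.

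\textbf{Main step.} Fix a non-increasing $\eta \in C^\infty_c([0,+\infty))$ with $\eta\equiv 1$ on $[0,1]$ and $\eta \equiv 0$ on $[2,+\infty)$, set $\eta_R(x) := \eta(r(x)/R)$, choose the sharp exponent $\beta := 2q/(q-1)$, and plug the nonnegative $C^\infty_c$ test function $\eta_R^\beta$ into \eqref{distrib}. Denoting by $\mathcal{A}_R := B_{2R}\setminus B_R$ the support of $\nabla \eta_R^\beta$, the formula \eqref{lap-rad} together with \eqref{bound-lap 0} yields the pointwise bound $|\Delta \eta_R^\beta| \le C \, \eta_R^{\beta - 2} R^{-2}$ on $\mathcal{A}_R$. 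The choice $\beta = 2q/(q-1)$ is calibrated so that an application of H\"older's inequality with conjugate exponents $(q, q')$, $q' := q/(q-1)$, produces a perfect cancellation of the powers of $\eta_R$; combined with the volume bound $V(\mathcal{A}_R) \le C R^N$ from \eqref{bound-psi}, this gives the key estimate
\[
\int_{\Mb^n} u^q \, \eta_R^\beta \, dV \;\le\; C\, R^{\,-2 + N/q'} \Big(\int_{\mathcal{A}_R} u^q \, \eta_R^\beta \, dV\Big)^{1/q} \qquad \forall R > 0.
\]
Trivially absorbing the right-hand factor into the left and rearranging yields
\[
\int_{\Mb^n} u^q \, \eta_R^\beta \, dV \;\le\; C \, R^{\,N - 2q/(q-1)} \qquad \forall R > 0.
\]

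\textbf{Conclusion and main obstacle.} Since $2q/(q-1) > N$ precisely when $q < N/(N-2) = \tilde 2_\alpha$, in the strictly subcritical regime the exponent of $R$ above is negative and sending $R\to+\infty$ (via monotone convergence) forces $u \equiv 0$. The delicate case is the borderline $q = \tilde 2_\alpha$, which is the main obstacle: here the displayed inequality only yields $u \in L^q(\Mb^n)$. To close the argument one uses the sharper form of the key estimate \emph{without} absorbing, which in the borderline reduces to
\[
\int_{B_R} u^q \, dV \;\le\; C\, \Big(\int_{\mathcal{A}_R} u^q \, dV\Big)^{1/q},
\]
and observes that the right-hand side is the tail of an $L^1$ function, hence vanishes as $R\to+\infty$. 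Finally, one small technical point is that \eqref{distrib} requires implicitly $u \in L^q_{\loc}(\Mb^n)$ in order for $u^q$ to pair with $\eta_R^\beta$, which follows from standard weak Harnack-type estimates for nonnegative distributional supersolutions.
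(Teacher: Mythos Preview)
Your proposal is correct and follows essentially the same route as the paper's proof: the same radial cutoff $\eta^\beta(r/R)$ with $\beta=2q/(q-1)$, the same Laplacian estimate via \eqref{lap-rad} and \eqref{bound-lap 0}, the same H\"older step together with the volume bound from \eqref{bound-psi}, and the same tail argument at the borderline $q=\tilde 2_\alpha$. One tiny imprecision: only the one-sided bound $-\Delta\eta_R^\beta \le C\,\eta_R^{\beta-2}R^{-2}$ is available (and needed) from \eqref{bound-lap 0}, not the two-sided $|\Delta\eta_R^\beta|$ bound, and the volume estimate holds only for $R$ large (since \eqref{bound-psi} is assumed for $r\ge r_0$); neither affects the argument.
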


We stress again that in the above statement $u$ \emph{need not be radial}, although the geometric setting is. This non-existence result has a natural counterpart in the Euclidean setting, where it is known that the Lane-Emden equation has no nonnegative nontrivial supersolutions for every $q \in \left(1,\tilde 2\right]$, where $\tilde 2=n/(n-2)$ for $n \ge 3$ and $\tilde 2=\infty$, with $ q \in (1,\infty) $, for $n=2$ (note that $\tilde 2=\tilde 2_1$). For more details, we refer to \cite[Theorem 8.4]{QS} (see also Remark 8.5 therein for further related results). 

Also, we also point out that the range of exponents $q$ for which non-existence of supersolutions is ensured is sharp, see Theorems \ref{soprasol-modelli} and \ref{prop: str count} below. This already justifies the fact that the sole vanishing condition \eqref{hp 2 psi} would not suffice in order to obtain results of this type: indeed, the exponent $\tilde 2_\alpha$ depends on the choice of $\alpha$, whereas \eqref{hp 2 psi} does not. Specifically, if assumption \eqref{hp 3 psi} is in force, then both \eqref{bound-lap 0} and \eqref{bound-psi} hold, therefore different choices of $\alpha$ give different (sharp) ranges for existence or non-existence of nonnegative nontrivial supersolutions. 

The proof of Theorem \ref{thm: supersol} is inspired by the Euclidean case. In fact, the strategy (suitably refined) even works in more general geometric contexts (not necessarily Riemannian models), as the next result shows.

\begin{theorem}\label{thm: non ex str gen}
Let $ \alpha>1 $ and $ \Mb^n $ be a complete noncompact Riemannian manifold. Assume that, for some $ o \in \Mb^n $, either there exist $ Q>0 $ and $ r_o>0 $ such that
\begin{equation}\label{ricci-bound-1}
    \mathrm{Ric}_o (x) \ge - (n-1) \, \frac{Q}{r^2} \quad \forall x \in B_{r_o}^c(o) \setminus \mathrm{cut}(o)  \qquad \text{and} \qquad \limsup_{R \to +\infty} \frac{V\!\left(B_R(o)\right)}{R^{\alpha(n-1)+1}} < +\infty \, ,
\end{equation}
or there exists $ r_o >0 $ such that 
\begin{equation}\label{ricci-bound-2}
 \mathrm{Ric}_o (x) \ge - (n-1) \, \frac{\alpha(\alpha-1)}{r^2} \qquad \forall x \in B_{r_o}^c(o) \setminus \mathrm{cut}(o) \, .
\end{equation}
Then, for every $q \in \left(1,\tilde 2_\alpha\right]$, the Lane-Emden equation \eqref{LE} does not have nonnegative nontrivial supersolutions.
\end{theorem}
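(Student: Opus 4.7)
\textbf{Proof proposal for Theorem \ref{thm: non ex str gen}.}

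The plan is to use a capacitary test-function argument in the spirit of Mitidieri--Pohozaev and Gidas--Spruck, feeding into \eqref{distrib} a suitable family of radial cutoffs built from the distance $r(x)=\mathrm{d}(x,o)$. Fix $\eta\in C^\infty_c([0,2))$ with $\eta\equiv 1$ on $[0,1]$, $0\le\eta\le 1$, and pick an exponent $\beta\ge 2q/(q-1)$. For each $R\gg r_o$ set
\[
\varphi_R(x):=\eta\!\left(\tfrac{r(x)}{R}\right)^{\!\beta},
\]
which is a (Lipschitz) nonnegative radial cutoff supported in $B_{2R}(o)$ and equal to $1$ on $B_R(o)$. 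Testing \eqref{distrib} against $\varphi_R$ gives
\[
\int_{\Mb^n} u^q \, \varphi_R \, dV \le \int_{\Mb^n} u \, |\Delta\varphi_R| \, dV .
\]
The cut locus has measure zero and the Laplacian comparison below can be read in the sense of distributions (Calabi's trick), so this integration by parts is justified after a standard smoothing of $\varphi_R$.

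The second step is an upper bound on $|\Delta\varphi_R|$. Writing $\phi=\eta^\beta$, the chain rule and $|\nabla r|=1$ give
\[
\Delta\varphi_R = \frac{1}{R^2}\,\phi''\!\left(\tfrac{r}{R}\right) + \frac{\Delta r}{R}\,\phi'\!\left(\tfrac{r}{R}\right).
\]
Under \eqref{ricci-bound-2}, comparison with the model ODE $h''= \alpha(\alpha-1)r^{-2}h$, whose positive solution vanishing at $0$ is $h(r)=r^\alpha$, yields
\[
\Delta r(x) \le (n-1)\,\frac{\alpha}{r(x)} \qquad\text{for } r(x)>r_o,
\]
outside $\mathrm{cut}(o)$ (with an asymptotically negligible additive correction accounting for initial data at $r_o$). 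Under \eqref{ricci-bound-1}, the same reasoning with $\alpha$ replaced by $\alpha':=(1+\sqrt{1+4Q})/2$ still furnishes a bound of the form $\Delta r\le C/r$ far from $o$; this constant is all we need for the $R$-scaling. On the annulus $\{R\le r\le 2R\}$, where $\eta'$ is supported, we thus obtain
\[
|\Delta\varphi_R| \le \frac{C}{R^{2}}\,\eta\!\left(\tfrac{r}{R}\right)^{\!\beta-2},
\]
the factor $\eta^{\beta-2}$ being tailored to Hölder's inequality.

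The third step is to apply Hölder with exponents $q$ and $q':=q/(q-1)$, writing $u|\Delta\varphi_R| = (u\,\varphi_R^{1/q})\cdot(|\Delta\varphi_R|\,\varphi_R^{-1/q})$. This gives
\[
\int_{\Mb^n} u^q \, \varphi_R \, dV \le \left(\int_{\Mb^n} u^q \, \varphi_R \, dV\right)^{\!1/q} \left(\int_{\Mb^n}|\Delta\varphi_R|^{q'} \varphi_R^{-q'/q} \, dV\right)^{\!1/q'}.
\]
The choice $\beta\ge 2q'$ guarantees that the integrand in the second factor is bounded on its support $B_{2R}\setminus B_R$, so that factor is controlled by $C R^{-2q'} V(B_{2R}(o))$. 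In case \eqref{ricci-bound-1} the volume bound is explicit; in case \eqref{ricci-bound-2}, Bishop--Gromov volume comparison against the radial model with $\psi(r)=r^\alpha$ yields $V(B_{2R}(o))\le C R^{\alpha(n-1)+1}$. Inserting this and simplifying,
\[
\left(\int_{\Mb^n} u^q \, \varphi_R \, dV\right)^{\!(q-1)/q} \le C \, R^{\,[\alpha(n-1)+1]/q'-2}.
\]
A direct computation shows that the exponent $[\alpha(n-1)+1]/q'-2$ is nonpositive if and only if $q\le \tilde 2_\alpha$, and strictly negative for $q<\tilde 2_\alpha$. In the strict range, letting $R\to+\infty$ forces $\int_{\Mb^n} u^q \, dV = 0$ and hence $u\equiv 0$. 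For the borderline $q=\tilde 2_\alpha$, the inequality only yields $\int_{\Mb^n} u^q \, dV<\infty$; one then repeats the Hölder step localizing the first factor to the annulus $B_{2R}\setminus B_R$ (where $\Delta\varphi_R\not\equiv 0$) to obtain
\[
\int_{\Mb^n} u^q \, \varphi_R \, dV \le C \left(\int_{B_{2R}(o)\setminus B_R(o)} u^q \, dV\right)^{\!1/q},
\]
and the right-hand side vanishes as $R\to+\infty$ by absolute continuity of the integral, concluding $u\equiv 0$ also at the critical exponent.

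The main obstacle is the interplay between the non-smoothness of $r$ on $\mathrm{cut}(o)$ and the integration by parts: it must be ensured that the upper Laplacian comparison survives in the distributional sense globally. This is a well-established consequence of Calabi's barrier construction (see e.g.\ \cite{GrWu}), so it presents no genuine obstruction; the delicate point is really the sharp matching, under each of the two alternative assumptions, between the polynomial bound on $\Delta r$ coming from the Ricci lower bound and the polynomial volume growth, which together pin down the exponent $\tilde 2_\alpha$.
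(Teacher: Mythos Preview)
Your argument follows the same strategy as the paper's: radial cutoffs $\varphi_R=\eta(r/R)^\beta$ built from the distance function, a Laplacian comparison to bound $\Delta r\le C/r$ far from $o$, H\"older's inequality with exponent $q'$, and the polynomial volume bound (assumed in \eqref{ricci-bound-1}, and derived from Bishop--Gromov under \eqref{ricci-bound-2}). The exponent bookkeeping and the borderline argument at $q=\tilde 2_\alpha$ are correct, and your remark that only the bound $\Delta r\le C/r$ (not the sharp constant) enters the $R$-scaling is exactly the point.

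The one place where the paper is substantially more careful, and where your sketch is too quick, is the pairing $\int u\,(-\Delta\varphi_R)$. Since $r$ is only Lipschitz, $\Delta\varphi_R$ is a signed Radon measure, and $u$ is \emph{a priori} just a distributional supersolution; integrating $u$ against a measure is not automatic. Your fix ``smooth $\varphi_R$'' does not obviously close the gap: after mollification you lose the structure $\varphi_R=\phi(r/R)$, so you cannot directly invoke the comparison bound on $\Delta\varphi_R^\epsilon$, and passing to the limit still lands you back at pairing a measure with a non-smooth $u$. The paper instead approximates on the other side: by critical elliptic regularity, $u\in W^{1,p}_{\mathrm{loc}}$, and one builds smooth $u_k\to u$ in $L^1_{\mathrm{loc}}$ solving $-\Delta u_k=\mu_k$ with $\mu_k\ge 0$ converging to $-\Delta u$ as measures (Lemma~\ref{lemma-approx-loc}). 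One then tests the \emph{measure inequality} $-\Delta\varphi_R\le CR^{-2}\varphi_R^{1/q}\chi_{B_{2R}\setminus B_R}$ against the smooth $u_k$ and passes to the limit. This is a genuine technical ingredient, not merely Calabi's trick, and is the only real gap in your proposal.
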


As already anticipated, we can construct counterexamples to the previous statements for $q>\tilde 2_\alpha$. 


\begin{theorem}\label{soprasol-modelli}
Let $ \alpha>1 $. Let $ \Mb^n $ be a model manifold associated with a model function $\psi$ satisfying
\begin{equation}\label{bound-lap}
\psi' \ge 0 \qquad \text{and} \qquad \liminf_{r \to +\infty} \frac{\psi'(r)}{\psi(r)} \, r  \ge \alpha \, .
\end{equation}
Then, for every $q > \tilde 2_\alpha$, the Lane-Emden equation \eqref{LE} has a positive, radial, and smooth supersolution.
\end{theorem}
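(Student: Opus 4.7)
The natural starting point is the classical Euclidean construction valid for $q > n/(n-2) = \tilde 2_1$, which uses the ansatz $u(x) = A(1+|x|^2)^{-1/(q-1)}$. On $\Mb^n$ I would therefore try the radial trial function
\[
u(r) = A\left(1 + \lambda r^2\right)^{-\gamma/2}, \qquad \gamma := \frac{2}{q-1},
\]
with $A,\lambda>0$ to be tuned. Observe that the condition $q > \tilde 2_\alpha$ rewrites as $\gamma < \alpha(n-1)-1$, so I can fix $\alpha' \in \bigl((\gamma+1)/(n-1),\alpha\bigr)$; by the hypothesis $\liminf_{r \to +\infty}(\psi'/\psi)\,r \ge \alpha$, there then exists $R_*>0$ such that $(\psi'(r)/\psi(r))\, r \ge \alpha'$ for every $r\ge R_*$.

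A direct computation using \eqref{lap-rad}, which takes advantage of the specific value $\gamma = 2/(q-1)$ to match the $(1+\lambda r^2)$-powers on the two sides of the Lane--Emden inequality, yields
\[
\frac{-\Delta u}{u^q} = \frac{\gamma\lambda}{A^{q-1}} \cdot \frac{B_\lambda(r)}{1+\lambda r^2}, \qquad B_\lambda(r) := 1 - \lambda(\gamma+1)\, r^2 + (n-1)\,\frac{\psi'(r)}{\psi(r)}\, r\,(1+\lambda r^2).
\]
The crux of the proof is then to show that $B_\lambda(r)/(1+\lambda r^2) \ge c_0 > 0$ uniformly in $r\in[0,+\infty)$, for a suitable choice of $\lambda$; once this is accomplished, choosing $A$ small enough will make $-\Delta u \ge u^q$.

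For $r \ge R_*$, inserting $(\psi'/\psi)\,r \ge \alpha'$ gives $B_\lambda(r) \ge 1 + (n-1)\alpha' + \lambda\bigl[(n-1)\alpha' - (\gamma+1)\bigr] r^2$, with both summands positive by the choice of $\alpha'$; a short elementary estimate (noticing that $(a+bs)/(1+cs)$ is monotone on $s\ge 0$) then produces a positive lower bound for $B_\lambda(r)/(1+\lambda r^2)$ on $[R_*,+\infty)$. For $r\in[0,R_*]$ there is no quantitative information on $(\psi'/\psi)\,r$ coming from the asymptotic hypothesis, but the standing assumption $\psi'\ge 0$ forces the last summand in $B_\lambda(r)$ to be non-negative; hence $B_\lambda(r) \ge 1 - \lambda(\gamma+1)\, R_*^2$, which is bounded below by $1/2$ as soon as $\lambda \le 1/\bigl[2(\gamma+1)R_*^2\bigr]$. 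Combining the two regimes delivers the desired uniform lower bound $c_0>0$.

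Hence $-\Delta u \ge \gamma\lambda c_0 A^{1-q}\, u^q$, which exceeds $u^q$ once $A$ is chosen sufficiently small; the resulting $u$ is strictly positive by construction and smooth on $\Mb^n$, since it depends smoothly on $r^2$ through $1+\lambda r^2$ (so all odd derivatives in $r$ vanish at the pole). The main, and essentially only, non-routine point is the intermediate regime $r\in[0,R_*]$: the hypothesis is purely asymptotic and provides no pointwise control on $(\psi'/\psi)\,r$ there, and it is precisely the role of the scaling parameter $\lambda$ to dominate the indefinite summand $-\lambda(\gamma+1)r^2$ using only the weak sign condition $\psi'\ge 0$.
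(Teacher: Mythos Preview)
Your proof is correct and essentially identical to the paper's: the ansatz $A(1+\lambda r^2)^{-1/(q-1)}$ is the paper's $A/(B+r^2)^{1/(q-1)}$ reparametrized via $B=1/\lambda$, and both arguments split into an outer region (where the asymptotic bound $(\psi'/\psi)r\ge \alpha'$ is available) and an inner region (where only $\psi'\ge 0$ is used, and the parameter $\lambda\sim 1/B$ is taken small to absorb the bad term), then finish by choosing $A$ small. The only cosmetic difference is that you phrase the key estimate as a uniform lower bound on $B_\lambda(r)/(1+\lambda r^2)$, while the paper works with the inequalities \eqref{choice-A}--\eqref{choice-C} directly.
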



\begin{theorem}\label{prop: str count} Let $\alpha>1$ and $\Mb^n$ be a Cartan-Hadamard manifold. Assume that, for some $ o \in \Mb^n $, for every $\delta \in (0 , \alpha-1) $ there exists $r_\delta>0$ such that 
\begin{equation}\label{curvaturasezpiccola}
  \mathrm{Sect}_{\omega}(x) \le - \frac{(\alpha-\delta)(\alpha-\delta -1)}{r^2} \qquad \forall x \in \Mb^n \setminus B_{r_\delta}(o) \, .
\end{equation}
Then, for every $q > \tilde 2_\alpha$, the Lane-Emden equation \eqref{LE} has a positive, radial, and smooth supersolution.
\end{theorem}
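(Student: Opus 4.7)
The plan is to construct an explicit radial supersolution on $\Mb^n$, using the curvature hypothesis \eqref{curvaturasezpiccola} as the basis for a Hessian/Laplacian comparison, in the spirit of the model-manifold construction underpinning Theorem~\ref{soprasol-modelli}. Since $q > \tilde 2_\alpha$ and the map $\alpha' \mapsto \tilde 2_{\alpha'} = 1 + 2/(\alpha'(n-1)-1)$ is continuous and strictly decreasing in $\alpha' > 1$, I first fix $\delta \in (0, \alpha-1)$ small enough that $q > \tilde 2_{\alpha-\delta}$, and then pick an exponent $\gamma$ satisfying
\[
\frac{1}{q-1} \;<\; \gamma \;<\; \frac{(n-1)(\alpha-\delta) - 1}{2},
\]
this interval being nonempty precisely because of the strict subcriticality of $q$ with respect to $\tilde 2_{\alpha-\delta}$.

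The key geometric input is a lower bound on the distance Laplacian. Since the function $\phi(r) := r^{\alpha-\delta}$ satisfies $\phi''(r) = \frac{(\alpha-\delta)(\alpha-\delta-1)}{r^2}\,\phi(r)$, the hypothesis \eqref{curvaturasezpiccola} matches the sectional profile of a model whose radial Laplacian is $(n-1)(\alpha-\delta)/r$. Applying the Hessian comparison theorem along radial geodesics (via the Riccati inequality for $\Delta r$, of which $r \mapsto (n-1)(\alpha-\delta)/r$ is the critical solution), I obtain
\[
\Delta r(x) \;\ge\; (n-1)\,\frac{\alpha-\delta}{r} \qquad \text{for all } r \ge R_0,
\]
for some $R_0 \ge r_\delta$; the technical loss inherent to Riccati comparison with non-matching initial data at $r=r_\delta$ can be absorbed by applying the hypothesis with a slightly smaller value of $\delta$ at the outset. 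Meanwhile, the Cartan--Hadamard assumption alone already gives the Euclidean-type bound $\Delta r \ge (n-1)/r$ for every $r>0$.

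I then consider the radial ansatz
\[
V(x) \;:=\; \big(a + r(x)^2\big)^{-\gamma},
\]
with $a > 0$ to be chosen large. Since $V$ is a smooth function of $r^2$, which in turn is globally smooth on a Cartan--Hadamard manifold (through the global diffeomorphism $\exp_o$), $V \in C^\infty(\Mb^n)$ and $V>0$. Setting $v(r) := (a+r^2)^{-\gamma}$, so that $v' \le 0$, a direct computation yields
\[
-\Delta V \;=\; 2\gamma\,(a+r^2)^{-\gamma-2}\left[\,a - (2\gamma+1)r^2 + r(a+r^2)\,\Delta r\,\right].
\]
Plugging in the lower bound on $\Delta r$ from the previous step shows that, on $\{r \ge R_0\}$, both the $a$-coefficient $1 + (n-1)(\alpha-\delta)$ and the $r^2$-coefficient $(n-1)(\alpha-\delta) - 2\gamma - 1$ of the resulting expression are strictly positive, by the upper bound on $\gamma$; on $\{r \le R_0\}$ the analogous expression uses only $\Delta r \ge (n-1)/r$. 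Comparing decay rates against $V^q = (a+r^2)^{-\gamma q}$ and using that $\gamma(q-1) > 1$, the supersolution inequality $-\Delta V \ge V^q$ closes for $a$ sufficiently large.

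The principal obstacle I foresee is patching the two regimes across $r = R_0$, since the sharper Laplacian comparison is only asymptotic and the function $r^{\alpha-\delta}$ does not satisfy standard model-function conditions at the origin, preventing a global comparison on $[0,\infty)$. The resolution is the one-parameter freedom in $a$: as $a \to +\infty$, $V^q$ decays at rate $a^{-\gamma q}$, strictly faster than the lower bound of order $a^{-\gamma-1}$ on $-\Delta V$ provided by the curvature comparison, precisely because $\gamma(q-1)>1$. All residual error terms supported on the compact radial interval $[0, R_0]$ are thus absorbed for $a$ large enough, delivering the required smooth, positive, radial supersolution on the entire Cartan--Hadamard manifold.
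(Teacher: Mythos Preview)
Your proof is correct and follows essentially the same route as the paper: Laplacian/Hessian comparison (Proposition~\ref{LC-ch}) to obtain $\Delta r \ge (n-1)(\alpha-\delta)/r$ for large $r$, combined with an explicit radial ansatz $(a+r^2)^{-\gamma}$ and a splitting into inner and outer regions. The only notable variation is that you take $\gamma$ strictly larger than $1/(q-1)$, which lets a single large parameter $a$ absorb all error terms, whereas the paper fixes $\gamma=1/(q-1)$ and instead tunes two parameters (a small amplitude $A$ and a large shift $B$); both choices work and the underlying mechanism is identical.
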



Regarding Theorem \ref{soprasol-modelli}, it is clear that the result is valid for any nondecreasing model function $\psi$ satisfying \eqref{hp 3 psi}.

\subsection*{The \emph{slightly subcritical} regime} We now focus on the case when $q$ is ``close'' to $2^*-1$ (or $q$ is sufficiently large for $n=2$), that is, $ q \in \left[2^*_\alpha-1,2^*-1\right) $. In this regime we can prove the existence, under mild assumptions on $\psi$, of a radial \emph{Sobolev minimizer}, which is in particular a positive \emph{finite-energy} solution.

\begin{theorem}\label{da capire}
Let $ \alpha>1 $. Let $ \Mb^n $ be a model manifold associated with a model function $\psi$ satisfying
\begin{equation}\label{eq-MR}
  \frac{\psi'(r)}{\psi(r)} \ge \frac{\sigma'(r)}{\sigma(r)} \qquad \forall r > 0 \, ,
\end{equation}
where $ \sigma $ is another model function complying with condition \eqref{hp 00 psi}.
Then, for every $q \in \left(2^*_\alpha-1,2^*-1\right)$, the Lane-Emden equation \eqref{LE} has a positive radial solution in $D^{1,2}_{\mathrm{rad}}(\Mb^n)$, which is also a Sobolev minimizer. 
\end{theorem}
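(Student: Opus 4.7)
\medskip
\noindent\textbf{Proof plan.} The plan is to realize the solution as a radial Sobolev minimizer, exploiting the fact that the range $q+1\in(2^*_\alpha,2^*)$ lies strictly above the natural \emph{radial} critical exponent $2^*_\alpha$ associated with the effective dimension $N:=\alpha(n-1)+1$ of the weighted 1D problem. Concretely, I would consider
\[
S_q:=\inf\left\{\int_{\Mb^n}|\nabla u|^2\,dV : u\in D^{1,2}_{\mathrm{rad}}(\Mb^n),\ \int_{\Mb^n}|u|^{q+1}\,dV=1\right\}
\]
and prove that the infimum is attained via the direct method, showing compactness of the radial Sobolev embedding.

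The first key step is a \emph{Strauss-type decay estimate} for radial $u\in D^{1,2}_{\mathrm{rad}}(\Mb^n)$. From the assumption \eqref{eq-MR}, integrating $(\log\psi)'\ge(\log\sigma)'$ and using \eqref{hp 00 psi} for $\sigma$ yields $\psi(r)\ge c\,r^\alpha$ for all $r$ large enough. Writing $u(r)=-\int_r^\infty u'(s)\,ds$ and applying Cauchy--Schwarz against the weight $\psi^{n-1}$,
\[
|u(r)|^2\le \frac{\|\nabla u\|_{L^2}^2}{\omega_{n-1}}\int_r^\infty \psi(s)^{-(n-1)}\,ds\le C\,\|\nabla u\|_{L^2}^2\,r^{-(\alpha(n-1)-1)}\qquad\text{for }r\ge r_0,
\]
since $\alpha(n-1)>1$. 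Inserting this pointwise bound into $\int_{B_R^c}|u|^{q+1}\psi^{n-1}\,dr$ gives a tail integrand $\sim r^{-(q+1)(\alpha(n-1)-1)/2+\alpha(n-1)}$, which is integrable and produces an arbitrarily small tail as $R\to\infty$ \emph{precisely because} $q+1>2^*_\alpha$.

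The second step is compactness of $D^{1,2}_{\mathrm{rad}}(\Mb^n)\hookrightarrow L^{q+1}(\Mb^n)$. For a bounded sequence $u_k$ in $D^{1,2}_{\mathrm{rad}}(\Mb^n)$, the Strauss estimate yields uniform decay at infinity, hence uniformly small tails in $L^{q+1}$. On any fixed ball $B_R$, standard local elliptic/Sobolev theory (the model is smooth and Euclidean-like near $o$, since $\psi\sim r$ at the origin) gives local boundedness in $H^1(B_R)$ and then strong convergence in $L^{q+1}(B_R)$ via Rellich--Kondrachov, for $q+1<2^*$. Splitting $\Mb^n=B_R\cup B_R^c$ and combining the two estimates upgrades weak convergence in $D^{1,2}_{\mathrm{rad}}$ to strong convergence in $L^{q+1}(\Mb^n)$. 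I expect this compactness step to be the main obstacle, as both tails (at infinity and the local part) need to be treated simultaneously, and the dual role of the exponents $2^*_\alpha$ (from the polynomial growth at infinity) and $2^*$ (from the Euclidean-type behaviour near $o$) is crucial.

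Once compactness is in hand, the direct method supplies a minimizer $u_\infty\in D^{1,2}_{\mathrm{rad}}$ with $\|u_\infty\|_{L^{q+1}}=1$ and $\|\nabla u_\infty\|_{L^2}^2=S_q>0$. Replacing $u_\infty$ by $|u_\infty|$ preserves all norms, so we may assume $u_\infty\ge 0$. The Lagrange multiplier identity reads $-\Delta u_\infty=S_q\,u_\infty^{\,q}$ in the radial sense, and by Palais' principle of symmetric criticality (the model has full $O(n)$ isometry around $o$) it holds in the sense of distributions on $\Mb^n$. The rescaling $u:=S_q^{1/(q-1)}u_\infty$ then solves \eqref{LE} in the distributional sense. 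Standard elliptic bootstrap promotes $u$ to a classical smooth solution, and positivity follows from the strong maximum principle applied to the superharmonic function $u\ge 0$ (if $u$ vanished at an interior point it would be identically $0$, contradicting $\|u\|_{L^{q+1}}>0$). This delivers the positive radial smooth solution in $D^{1,2}_{\mathrm{rad}}(\Mb^n)$, which by construction is a Sobolev minimizer.
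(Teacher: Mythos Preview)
Your overall strategy---prove compactness of the radial embedding $D^{1,2}_{\mathrm{rad}}(\Mb^n)\hookrightarrow L^{q+1}(\Mb^n)$ and then run the direct method---coincides with the paper's. The gap is in the tail estimate. You write that inserting the Strauss bound into $\int_{B_R^c}|u|^{q+1}\psi^{n-1}\,dr$ produces an integrand $\sim r^{-(q+1)(\alpha(n-1)-1)/2+\alpha(n-1)}$, but this presupposes $\psi^{n-1}(r)\lesssim r^{\alpha(n-1)}$, i.e.\ an \emph{upper} bound on $\psi$. The hypothesis \eqref{eq-MR} gives only a \emph{lower} bound: integrating $(\log\psi)'\ge(\log\sigma)'$ yields $\psi\ge\sigma\ge c\,r^\alpha$, nothing more. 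The theorem is deliberately stated so as to allow $\psi$ to grow arbitrarily faster than $r^\alpha$ (the paper gives explicit examples such as $\psi(r)=r^\alpha\log r\,e^{\sin r}$), and for such $\psi$ your tail integral $\int_R^\infty r^{-(q+1)(\alpha(n-1)-1)/2}\psi^{n-1}(r)\,dr$ can diverge.

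The paper circumvents this by invoking the Opic--Kufner characterization of the one-dimensional weighted embedding (Proposition~\ref{KO}): both continuity and compactness are equivalent to explicit conditions on the single quantity $\big(\int_0^r\psi^{n-1}\big)^{1/p}\big(\int_r^\infty\psi^{-(n-1)}\big)^{1/2}$. The crucial step is then the comparison inequality
\[
\left(\int_0^r\psi^{n-1}\right)^{\!1/p}\!\left(\int_r^\infty\psi^{-(n-1)}\right)^{\!1/2}\le\left(\int_0^r\sigma^{n-1}\right)^{\!1/p}\!\left(\int_r^\infty\sigma^{-(n-1)}\right)^{\!1/2},
\]
whose proof uses not merely $\psi\ge\sigma$ but the finer monotonicity $\psi(r)/\psi(s)\ge\sigma(r)/\sigma(s)$ for $r>s$ (also a consequence of \eqref{eq-MR}). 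This lets one rewrite each integral in terms of ratios $\psi(s)/\psi(r)$ and compare to $\sigma(s)/\sigma(r)$, so that the growth of $\psi$ cancels and the condition reduces to that for $\sigma$, where \eqref{hp 00 psi} makes it directly checkable. Your Strauss decay is correct and would suffice \emph{once} some embedding $D^{1,2}_{\mathrm{rad}}\hookrightarrow L^{p_0}$ with $p_0\le q+1$ is in hand (split $|u|^{q+1}=|u|^{q+1-p_0}|u|^{p_0}$ and bound the first factor by $G(R)^{(q+1-p_0)/2}$), but establishing that embedding under \eqref{eq-MR} alone is precisely the missing piece.
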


In the statement, we denoted by $D^{1,2}_{\mathrm{rad}}(\Mb^n)$ the closure of the space of $ C^1_c $ radial functions $f$ with respect to the weighted norm
\[
 \left(\int_{\Mb^n} \left| \nabla f \right|^2 dV \right)^{\frac 1 2} = \left(\int_0^{+\infty} \left| f' \right|^2 \psi^{n-1} \, dr \right)^{\frac 1 2} .
\]
By a \emph{Sobolev minimizer}, we mean a function $ f \in D^{1,2}_{\mathrm{rad}}(\Mb^n)$ achieving
\beq\label{SobolevFunct}
I_{\psi,q}:=\inf_{f \in D^{1,2}_{\mathrm{rad}}(\Mb^n) \setminus \{0\}} R_{\psi,q}(f) \, , \qquad \text{where} \quad R_{\psi,q}(f) := \frac{\left(\int_0^{+\infty} \left|f'\right|^2 \psi^{n-1} \, dr \right)^\frac{1}{2}}{\left(\int_0^{+\infty} |f|^{q+1} \, \psi^{n-1}\,dr \right)^\frac{1}{q+1}} \, .
\eeq
In general, $D^{1,2}_{\mathrm{rad}}(\Mb^n)$ is not a function space, as in the closure operation constants may pop up and therefore need to be removed by means of a quotient. This occurs, for instance, if $ n=2 $ and $\alpha=1$. However, if a Sobolev-type inequality holds, which will always be the case for us since $\alpha>1$, it is indeed a function space with norm $ \| \nabla f \|_2 $.
In fact, in proving Theorem \ref{da capire}, crucial tools are given by the sharp one-dimensional inequalities provided by \cite[Theorems 6.2 and 7.4]{OK}, which will allow us to establish the continuity of the embedding $D^{1,2}_{\mathrm{rad}}(\Mb^n) \hookrightarrow L^p(\Mb^n)$ for every $p \in \left[2^*_\alpha, 2^*\right]$, along with its compactness for $ p \in \left(2^*_\alpha , 2^* \right)$, under \eqref{hp 00 psi} or, more in general, under assumption \eqref{eq-MR}. For the reader's convenience, the corresponding statements are recalled in Section \ref{sec: slight}, see Proposition \ref{KO} and Corollary \ref{KO-cor} there. In this way, we can use a direct variational argument to prove existence of a solution to \eqref{LE} as a Sobolev minimizer. As mentioned above, it is hopeless to be able to obtain such a kind of results assuming only \eqref{hp 2 psi}, since the specific asymptotic behavior of $ \psi$ (in terms of $\alpha$) plays a major role here.

The borderline case $ q=2^*_\alpha-1 $ is much subtler, due to lack of compactness. We can still obtain existence of Sobolev minimizers, but under more restrictive assumptions on $\psi$ (this is however consistent with the non-existence results we will present in the next paragraph).

\begin{theorem}\label{exi-crit}
    Let $\alpha>1$. Let $ \Mb^n $ be a model manifold associated with a model function $\psi$ satisfying \eqref{hp 0 psi}. Suppose that the following inequality holds between optimal Sobolev quotients:
\beq\label{asym-sobo}
I_{\psi,q} < I_\alpha \qquad \text{for } q=2^*_\alpha-1 \, , 
\eeq
where
\[
\quad I_\alpha:= \inf_{\varphi \in C^1_c([0,+\infty)) \setminus \{ 0 \} } \frac{\left(\int_0^{+\infty} \left| \varphi'(r) \right|^2 \kappa^{n-1} \, r^{\alpha(n-1)} \, dr \right)^{\frac 12}}{\left(\int_0^{+\infty} \left|\varphi(r)\right|^{2^*_\alpha} \kappa^{n-1} \,r^{\alpha(n-1)} \, dr \right)^{\frac{1}{2^*_\alpha}}} \, .
\]
    Then the Lane-Emden equation \eqref{LE} has a positive radial solution in $D^{1,2}_{\mathrm{rad}}(\Mb^n)$ for $q=2^\ast_\alpha-1$, which is also a Sobolev minimizer.
\end{theorem}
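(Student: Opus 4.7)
This is a concentration-compactness scheme in the spirit of Brezis-Nirenberg. At the critical exponent $p := 2^*_\alpha$, the embedding $D^{1,2}_{\mathrm{rad}}(\Mb^n) \hookrightarrow L^p(\psi^{n-1}\,dr)$ provided by Proposition \ref{KO} is only continuous, not compact, and minimizing sequences may lose mass at infinity, where, by \eqref{hp 0 psi}, the weight $\psi^{n-1}$ is asymptotic to the scale-invariant pure-power $\kappa^{n-1} r^{\alpha(n-1)}$; the sharp ``Sobolev constant at infinity'' is precisely $I_\alpha$, and the assumption \eqref{asym-sobo} is tailored to rule out this scenario. Concretely, fix a minimizing sequence $\{f_k\} \subset D^{1,2}_{\mathrm{rad}}(\Mb^n)$ normalised by $\int_0^{\iy} |f_k|^p \, \psi^{n-1}\,dr = 1$ with $\int_0^{\iy} |f_k'|^2 \, \psi^{n-1}\,dr \to I_{\psi,q}^2$. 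Up to a subsequence $f_k \rightharpoonup f$ weakly in $D^{1,2}_{\mathrm{rad}}(\Mb^n)$ and, since $p < 2^*$ (as $\alpha > 1$), strongly in $L^s_{\mathrm{loc}}(\Mb^n)$ for every $s \in [2, 2^*)$. Setting $g_k := f_k - f$, the Brezis-Lieb lemma for the $L^p$-norm and weak convergence of gradients in $L^2(\psi^{n-1}\,dr)$ yield
\begin{equation*}
\int_0^{\iy} |f_k'|^2 \, \psi^{n-1}\,dr = \int_0^{\iy} |f'|^2 \, \psi^{n-1}\,dr + \int_0^{\iy} |g_k'|^2 \, \psi^{n-1}\,dr + o(1),
\end{equation*}
\begin{equation*}
1 = \int_0^{\iy} |f|^p \, \psi^{n-1}\,dr + \int_0^{\iy} |g_k|^p \, \psi^{n-1}\,dr + o(1).
\end{equation*}

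The central step is an \emph{asymptotic Sobolev inequality}: for every $\eps > 0$ there exists $R_\eps > 0$ such that every $\varphi \in D^{1,2}_{\mathrm{rad}}(\Mb^n)$ with $\mathrm{supp}\,\varphi \subset [R_\eps,+\iy)$ satisfies
\begin{equation*}
\int_0^{\iy} |\varphi'|^2 \, \psi^{n-1}\,dr \ge (1-\eps) \, I_\alpha^2 \left( \int_0^{\iy} |\varphi|^p \, \psi^{n-1}\,dr \right)^{2/p}.
\end{equation*}
Its proof combines \eqref{hp 0 psi}, which pinches $\psi^{n-1}$ between $(1 \mp \tilde\eps) \kappa^{n-1} r^{\alpha(n-1)}$ for $r \ge R_\eps$, with the \emph{scale invariance of the $I_\alpha$-quotient}, available precisely because $p = 2^*_\alpha$ is critical for the weight $\kappa^{n-1} r^{\alpha(n-1)}$: by rescaling test functions (together with a harmless approximation argument at $r=0$, where the weight vanishes), $I_\alpha$ is also the infimum over functions supported in any half-line $[R,+\iy)$. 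Applying the asymptotic inequality to $\eta_{R_\eps} g_k$ for a smooth cutoff $\eta_{R_\eps}$ vanishing on $[0,R_\eps]$ and equal to $1$ on $[R_\eps+1,+\iy)$, the cross terms produced by $\eta_{R_\eps}'$ are supported in $[R_\eps, R_\eps+1]$, where $g_k \to 0$ strongly in $L^2(\psi^{n-1}\,dr)$ by local Rellich-Kondrachov; sending $k\to\iy$ and then $\eps\to 0$ yields
\begin{equation*}
\liminf_{k\to\iy} \int_0^{\iy} |g_k'|^2 \, \psi^{n-1}\,dr \ge I_\alpha^2 \left( \liminf_{k\to\iy} \int_0^{\iy} |g_k|^p \, \psi^{n-1}\,dr \right)^{2/p}.
\end{equation*}

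Set $B_f := \int_0^{\iy} |f|^p \, \psi^{n-1}\,dr$ and $B_g := \liminf_k \int_0^{\iy} |g_k|^p \, \psi^{n-1}\,dr$, so that $B_f + B_g = 1$. The two decompositions above, combined with $\int |f'|^2 \psi^{n-1}\,dr \ge I_{\psi,q}^2 B_f^{2/p}$ from the definition of $I_{\psi,q}$ and with the asymptotic bound, give $I_{\psi,q}^2 \ge I_{\psi,q}^2 B_f^{2/p} + I_\alpha^2 B_g^{2/p}$. Since $p > 2$, $x^{2/p} \ge x$ on $[0,1]$, so if $B_g > 0$ we would derive, using $I_\alpha^2 > I_{\psi,q}^2$ from \eqref{asym-sobo},
\begin{equation*}
I_{\psi,q}^2 > I_{\psi,q}^2 (B_f^{2/p} + B_g^{2/p}) \ge I_{\psi,q}^2 (B_f + B_g) = I_{\psi,q}^2,
\end{equation*}
a contradiction. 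Hence $B_g = 0$, so $f_k \to f$ strongly in $L^p(\psi^{n-1}\,dr)$, $B_f = 1$, and weak lower semicontinuity forces $\int |f'|^2 \psi^{n-1}\,dr = I_{\psi,q}^2$: in other words, $f$ is a nontrivial radial Sobolev minimizer. Replacing $f$ with $|f|$, its Euler-Lagrange equation reads $-\Delta f = \mu f^q$ for some $\mu > 0$, which becomes \eqref{LE} after the rescaling $u := \mu^{1/(q-1)} f$; elliptic regularity gives $u \in C^\iy$, and the strong maximum principle gives $u > 0$. The main difficulty is establishing the asymptotic Sobolev inequality with the sharp constant $I_\alpha$, which requires exploiting in tandem the precise asymptotic profile \eqref{hp 0 psi} of $\psi$ and the scale invariance of $I_\alpha$ (a feature available only at the exponent $p = 2^*_\alpha$); once this is in place, the rest is a clean Brezis-Nirenberg-type dichotomy.
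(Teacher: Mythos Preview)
Your proof is correct and follows essentially the same concentration-compactness scheme as the paper: both rule out mass escape to infinity by comparing $I_\psi$ with the asymptotic constant $I_\alpha$ and invoking strict subadditivity of $x\mapsto x^{2/p}$ under \eqref{asym-sobo}. The only cosmetic difference is that you organize the splitting via the Brezis--Lieb decomposition $f_k=f+g_k$ and then cut off $g_k$, whereas the paper applies a spatial cutoff directly to the minimizing sequence; incidentally, the scale-invariance of $I_\alpha$ you invoke, while true, is not strictly needed for the inequality direction you use---extension by zero to $[0,\infty)$ already gives the lower bound, which is how the paper proceeds.
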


\begin{remark}
If $\psi$ satisfies \eqref{hp 00 psi}, then the assumptions of Theorem \ref{da capire} are clearly met. On the other hand, the theorem is more flexible than that, since one can easily construct examples of model functions $\psi$ for which \eqref{hp 00 psi} does not hold (with $\psi$ growing faster than $r^\alpha$), while \eqref{eq-MR} is still satisfied. In this regard, we refer to Subsection \ref{sub: examples exi-crit} below for some explicit examples.
Therein, we also provide examples of functions $\psi$ fulfilling \eqref{hp 0 psi} and \eqref{asym-sobo}.
\end{remark}

\subsection*{The  \emph{intermediate} regime} It remains to discuss what happens when $q \in \left(\tilde 2_\alpha, 2^*_\alpha-1\right]$. This case turns out to be particularly delicate. We firstly present a non-existence result.

\begin{theorem}\label{thm: non-ex int}
Let $\alpha>1$. Let $\Mb^n$ be a model manifold associated with a model function $\psi$ satisfying \eqref{hp 00 psi}. Let $q \in \left(\tilde 2_\alpha, 2^*_\alpha-1 \right]$, where in the critical case $ q=2^*_\alpha-1 $ we additionally require \eqref{hp 3 psi} to hold. Suppose moreover that
\beq\label{hp 4 psi}
(n-1) \, \frac{\psi'(r)}{\psi^{n}(r)} \int_0^r \psi^{n-1} \,ds \le \frac12+\frac{1}{q+1} \qquad \forall r>0 \, .
\eeq
Then the Lane-Emden equation \eqref{LE} has no positive radial solution.
\end{theorem}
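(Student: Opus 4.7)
The plan is to argue by contradiction: assume a positive radial solution $u=u(r)$ exists and derive a Pohozaev-type identity whose bulk integrand matches exactly the signed deficit in \eqref{hp 4 psi}. The radial form of \eqref{LE} is $(\psi^{n-1}u')'+\psi^{n-1}u^q=0$; integrating once gives $\psi^{n-1}(r)u'(r)=-\int_0^r \psi^{n-1}u^q\,ds<0$ for $r>0$, so $u$ is strictly decreasing, and a routine argument forces $u(r)\to 0$ as $r\to\infty$. Let $\Psi(r):=\int_0^r\psi^{n-1}\,ds$ and $h(r):=\Psi(r)/\psi^{n-1}(r)$. I would multiply the ODE by $h\,u'$, integrate by parts on $[0,R]$ (boundary terms at $r=0$ vanish since $\Psi(0)=\psi^{n-1}(0)=0$), and combine with the ``multiply by $u$'' identity $\int_0^R\psi^{n-1}(u')^2\,dr = \int_0^R\psi^{n-1}u^{q+1}\,dr + \psi^{n-1}(R)u(R)u'(R)$ in order to eliminate $\int_0^R\psi^{n-1}u^{q+1}\,dr$. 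The outcome is
\[
\int_0^R\psi^{n-1}(r)\left[\frac12+\frac1{q+1}-(n-1)\,\frac{\psi'(r)\,\Psi(r)}{\psi^n(r)}\right](u'(r))^2\,dr \;=\; B(R),
\]
with $B(R):=\frac{\Psi(R) u(R)^{q+1}}{q+1}+\frac{\Psi(R) (u'(R))^2}{2}+\frac{\psi^{n-1}(R) u'(R) u(R)}{q+1}$. The point of the multiplier $h$ is precisely that the bulk bracket equals the deficit in \eqref{hp 4 psi}.

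By \eqref{hp 4 psi} this bracket is pointwise $\ge 0$. Moreover the smooth-model conditions \eqref{hp 1 psi}--\eqref{hp 1 psi'} give $\psi(r)\sim r$ and $\Psi(r)\sim r^n/n$ as $r\to 0^+$, so $(n-1)\psi'\Psi/\psi^n\to (n-1)/n$; since $q\le 2^*_\alpha-1<2^*-1$ (which uses $\alpha>1$), an elementary computation shows $(n-1)/n<\frac12+\frac1{q+1}$, hence the bracket is \emph{strictly} positive on a neighbourhood of $0$. Combined with $(u')^2>0$ for all $r>0$, this makes the left-hand side strictly positive and nondecreasing in $R$: in particular $\mathrm{LHS}(R)\ge c_0>0$ for all $R$ large. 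Therefore it suffices to show $\limsup_{R\to\infty} B(R)\le 0$.

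For this last step I would invoke the standard asymptotic dichotomy for positive decaying solutions of Emden--Fowler-type radial ODEs: either (i) $\psi^{n-1}u'\to -L\in(-\infty,0)$ (``linear'' decay), which under \eqref{hp 00 psi} yields $u(r)\sim C\,r^{-(\alpha(n-1)-1)}$; or (ii) $\psi^{n-1}u'\to -\infty$ and a dominant-balance analysis of the ODE (available because $q>\tilde 2_\alpha$) gives the ``singular'' decay $u(r)\sim c\,r^{-2/(q-1)}$, where $c^{q-1}=\frac{2}{q-1}\bigl(\alpha(n-1)-1-\frac{2}{q-1}\bigr)$. In case (i) each of the three summands in $B(R)$ is a strictly negative power of $r$ (using $q+1>\tilde 2_\alpha$ and $\alpha(n-1)>1$), so $B(R)\to 0$, contradicting $\mathrm{LHS}(R)\ge c_0>0$. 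The same conclusion is reached in case (ii) as long as $q<2^*_\alpha-1$. The principal obstacle is the borderline case $q=2^*_\alpha-1$: now all three summands in $B(R)$ contribute at the same order $r^0$ and one must control their signed sum, which is exactly where \eqref{hp 3 psi} enters, pinning down $\psi(r)\sim \kappa r^\alpha$ sharply enough that an explicit arithmetic calculation (using $q+1=2^*_\alpha$, $q-1=4/(\alpha(n-1)-1)$, $c^{q+1}=c^2(\alpha(n-1)-1)^2/4$) yields
\[
\lim_{R\to\infty} B(R) \;=\; -\frac{\kappa^{n-1}\,c^2\,(\alpha(n-1)-1)^2}{4\,(\alpha(n-1)+1)^2} \;<\; 0,
\]
once again contradicting $\mathrm{LHS}(R)\ge c_0>0$.
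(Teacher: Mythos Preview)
Your function $B(R)$ is exactly the paper's Pohozaev function $P_u(R)$, and your displayed identity is the integrated form of the derivative formula $P_u'(r)=\psi^{n-1}(r)\bigl[\tfrac12+\tfrac1{q+1}-(n-1)\tfrac{\psi'\Psi}{\psi^n}\bigr](u')^2$; so the strategy coincides with the paper's. For $q<2^*_\alpha-1$ your argument is correct, though the (i)/(ii) dichotomy is unnecessary: a direct integration of the ODE yields the \emph{upper} bounds $u(r)\le C(r+1)^{-2/(q-1)}$ and $|u'(r)|\le C(r+1)^{-(q+1)/(q-1)}$ for every positive radial solution, and these alone force each summand of $B(R)$ to vanish (no precise asymptotic $u\sim cr^{-2/(q-1)}$ is needed here).

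The genuine gap is at the borderline $q=2^*_\alpha-1$. Your case~(i) is fine (it yields $B(R)\to 0$, already a contradiction), but in case~(ii) the phrase ``dominant-balance analysis'' is not a proof that $r^{2/(q-1)}u(r)$ has a limit, let alone that the limit equals the specific constant $c$. In the paper this step is the heart of the critical case and occupies three separate lemmas: one first shows (using $P_u\to\ell>0$ and the upper bounds) that $\liminf_{r\to\infty} r^{2/(q-1)}u(r)>0$; then one proves the limit actually \emph{exists}, via a second-order analysis at hypothetical local minima of $r\mapsto r^{2/(q-1)}u(r)$ combined with an explicit evaluation of $P_u(+\infty)$ along that subsequence (this already exploits \eqref{hp 3 psi}); and finally one identifies the limit by multiplying the ODE by $u'$, integrating on $(r,\infty)$, and passing to the limit. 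Only after all this is your final arithmetic for $\lim_{R\to\infty}B(R)$ legitimate. So your endgame is right, but the claim that case~(ii) automatically produces the exact singular profile is the missing substantive step.
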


\begin{remark}\label{rem: on hp 4}
Regarding assumption \eqref{hp 4 psi} we observe that, for $q \le 2^*_\alpha-1$, the strict inequality is always valid for any model function $\psi$ for \emph{small} $r>0$ (by \eqref{hp 1 psi}). Indeed, 
\[
\frac{\psi'(r)}{\psi^{n}(r)} \int_0^r \psi^{n-1} \,ds \to \frac{1}{n} \qquad \text{as $r \to 0$} \, ,
\]
hence the validity of the strict inequality in \eqref{hp 4 psi} reduces to 
\[
\frac12+\frac1{q+1}-\frac{n-1}{n}>0 \qquad  \iff \qquad q+1<2^* \, ,
\]
which is plainly satisfied for all $q \le 2^*_\alpha-1$.
\end{remark}

\begin{remark}
If $q < 2^*_\alpha - 1$ and \eqref{hp 3 psi} is in force in the place of \eqref{hp 00 psi}, then \eqref{hp 4 psi} is also fulfilled for sufficiently large values of $r > 0$ (with strict inequality). Indeed, in this case
\[
\frac{\psi'(r)}{\psi^{n}(r)} \int_0^r \psi^{n-1} \,ds \rightarrow \frac{\alpha}{\alpha(n-1)+1} \qquad \text{as $r \to +\infty$} \, ,
\]
and
\[
\frac12+\frac1{q+1} -\frac{\alpha(n-1)}{\alpha(n-1)+1}>0 \qquad \iff \qquad q+1<2^*_\alpha \, .
\]
Roughly speaking, this means that, at least under the more restrictive condition \eqref{hp 3 psi}, assumption \eqref{hp 4 psi} is not automatically granted only in a certain interval of radii $[r_0,r_1]$, with $0<r_0<r_1<+\infty$ (depending on $\psi$).
\end{remark}

We point out that assumption \eqref{hp 4 psi} is used in order to ensure the monotonicity of a \emph{Pohozaev-type function}, which in turn is the key ingredient in the non-existence result (see the beginning of Subsection \ref{nonex} below). The connection between monotonicity properties of a Pohozaev-type function and the structure of positive radial solutions of semilinear problems was explored, in different contexts than ours, in \cite{BeFeGr, BoGaGrVa, GaItQu, MuSo, MuSoSys}. 

We will provide several examples of model functions $\psi$ satisfying \eqref{hp 3 psi} and \eqref{hp 4 psi} in Subsection \ref{sub: examples non-ex} ahead. It is natural to wonder whether an assumption such as \eqref{hp 4 psi} is necessary or not. This is, in general, an open problem, but we can show that some further restrictions on $\psi$ (beyond \eqref{hp 00 psi} or \eqref{hp 3 psi}) are indeed necessary. That is, we have the following existence result.

\begin{theorem}\label{thm: ex int}
Let $\alpha>1$ and $q \in \left(\tilde 2_\alpha, 2^*_\alpha-1\right)$. Then there exists a Cartan-Hadamard model manifold $\Mb^n$ associated with a model function $\psi$, satisfying \eqref{hp 3 psi}, such that the Lane-Emden equation \eqref{LE} has a positive radial solution $ u \in D^{1,2}_{\mathrm{rad}}\!\left( \Mb^n \right) $.
\end{theorem}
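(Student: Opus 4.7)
The plan is to prove existence by \emph{reverse-engineering}: I would prescribe an explicit candidate positive radial function $u$, recover the model function $\psi$ algebraically from the radial Lane-Emden equation, and verify that the resulting $\psi$ defines a valid Cartan-Hadamard model satisfying \eqref{hp 3 psi}.

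Specifically, setting $\gamma := ((n-1)\alpha - 1)/2$ and $A := [n((n-1)\alpha-1)]^{1/(q-1)}$, I would take as candidate
\[
u(r) := A \, (1+r^2)^{-\gamma},
\]
and, from the identity $(n-1) \, \psi'/\psi = (u^q + u'')/(-u')$, obtain after direct computation
\[
\frac{\psi'(r)}{\psi(r)} = g(r) := \frac{1}{(n-1) \, r}\!\left[\,n(1+r^2)^{1-\gamma(q-1)} + (2\gamma+1) - \frac{2\gamma+2}{1+r^2}\,\right] .
\]
The specific value of $A$ is forced by demanding that $g(r) \sim 1/r$ as $r \to 0^+$, which is required for smooth matching of the metric at the pole. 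Then $\psi$ is defined by integrating $\psi'/\psi = g$ with the normalization $\psi(r) \sim r$ near the origin, explicitly $\psi(r) = r \, \exp\!\left(\int_0^r (g(s) - 1/s)\,ds\right)$.

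Next, I would verify four key properties of the pair $(\psi, u)$. (i) $\psi$ is a model function per Definition~\ref{def:mf}: the integrand $g(s) - 1/s$ is smooth at $s = 0$ and \emph{odd} in $s$, so the exponent is an even smooth function, making $\psi$ odd in $r$ and hence $\psi^{(2k)}(0) = 0$ for every $k$. (ii) The condition \eqref{hp 3 psi}: since $q > \tilde 2_\alpha$ is equivalent to $1 - \gamma(q-1) < 0$, the first term in $g$ decays at infinity, whence $g(r) = \alpha/r + o(1/r)$, which integrates to $\psi(r) \sim \kappa r^\alpha$ and $\psi'(r) \sim \kappa \alpha r^{\alpha-1}$. (iii) Convexity (Cartan-Hadamard): the condition $\psi'' \geq 0$ is equivalent to $g^2 + g' \geq 0$; near $r = 0$ the leading coefficient reduces to positivity of $C := (n - n\gamma(q-1) + 2\gamma + 2)/(n-1)$, and a short algebraic calculation shows that the hypothesis $q < 2^*_\alpha - 1$ forces $C > 0$, while at infinity $g^2 + g' \to \alpha(\alpha - 1)/r^2 > 0$ because $\alpha > 1$. (iv) Finite energy: the integrand $(u')^2 \, \psi^{n-1}$ behaves like $r^{-(n-1)\alpha}$ at infinity, which is integrable since $(n-1)\alpha > 1$, and standard cutoff arguments give $u \in D^{1,2}_{\mathrm{rad}}(\Mb^n)$.

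The main obstacle will be the \emph{global} verification of convexity in (iii): even with positive endpoint behavior, proving $g^2 + g' \geq 0$ throughout $(0,+\infty)$ requires a careful algebraic analysis exploiting the specific rational structure of $g$ and the fact that $\sigma := 1 - \gamma(q-1)$ lies strictly in $(-1, 0)$ in the intermediate regime. The delicate cancellation between the three summands defining $g$ must be tracked precisely to close the argument.
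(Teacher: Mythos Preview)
Your approach differs substantially from the paper's. The paper builds $\psi$ and $u$ by \emph{gluing}: near the pole it takes $\psi=\sinh r$ and $u$ the radial Dirichlet solution on the unit hyperbolic ball; far away it takes $\psi=\kappa(r-\tilde r_0)^\alpha$ and $u=c(r-\tilde r_0)^{-2/(q-1)}$. These are matched at a tangency point, producing a Lipschitz $\psi$ with a $C^1$ solution $u$; the junction is then smoothed in two steps (Lemmas \ref{lemmaLip}--\ref{lemmaintermedio}), and convexity survives because it is trivially true on each piece and the smoothing is confined to a shrinking interval where one controls $\bar\psi''/\bar\psi$ explicitly.

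Your single-formula ansatz $u=A(1+r^2)^{-\gamma}$ is more direct, and your verifications of (i), (ii), (iv) are correct: the oddness of $g(s)-1/s$ gives smoothness at the pole, $q>\tilde 2_\alpha$ forces $\sigma:=1-\gamma(q-1)<0$ and hence $\psi'/\psi\sim\alpha/r$, and $(u')^2\psi^{n-1}\sim r^{-(n-1)\alpha}$ is integrable. The genuine gap is exactly where you flag it: the \emph{global} inequality $g^2+g'\ge 0$. Your endpoint analysis is right---the limit $3C>0$ at $r=0$ (using $\sigma>-1$, equivalently $q<2^*_\alpha-1$) and $\alpha(\alpha-1)/r^2>0$ at infinity---but this does not rule out a sign change in between. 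Writing $h(r):=(n-1)r\,g(r)$, the condition becomes $rh'-h+h^2/(n-1)\ge 0$; one can check that $h$ is increasing on an initial interval $[1,s^*]$ (in the variable $s=1+r^2$) where the inequality is automatic, but on $(s^*,\infty)$ it requires bounding $|rh'|$ against $h(h-(n-1))/(n-1)$, and the presence of the non-polynomial term $s^\sigma$ with $\sigma\in(-1,0)$ means no simple monotonicity or concavity argument closes this for all $(n,\alpha,q)$ simultaneously. You have named the obstacle rather than removed it; as written the proof is incomplete, and it is not evident \emph{a priori} that this particular $u$ yields a convex $\psi$ throughout the full parameter range. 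The paper's gluing strategy buys precisely this: convexity of the building blocks is immediate, so all the work is local to the junction. Your route, if the global inequality can be established, would give $\psi$ in closed form---a cleaner outcome---but the hard step remains to be done.
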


Theorems \ref{thm: non-ex int} and \ref{thm: ex int} reveal that in the (strict) intermediate regime $q \in \left(\tilde 2_\alpha, 2^*_\alpha-1\right)$ existence and non-existence depend in a subtle way on the specific structure of $\psi$, and that knowing only its asymptotic behavior as $r \to +\infty$ is not enough. The same phenomenon also occurs for the critical case $q=2^*_\alpha-1$ (cf.\ Theorems \ref{thm: non-ex int} and \ref{exi-crit}). We emphasize that in Theorem \ref{thm: ex int} we construct an example of a $C^\infty$ \emph{Cartan-Hadamard} model manifold: thus, existence or non-existence is not even a matter of regularity or convexity. This marks a stark difference with the rigidity results, on Cartan-Hadamard models, obtained in \cite{MuSo, MuSoSys} for the \emph{supercritical} case.

Interestingly, the existence of a global positive radial solution in the intermediate regime implies a \emph{non-uniqueness} phenomenon for local homogeneous Dirichlet problems in balls, which is in sharp contrast with the Euclidean and hyperbolic cases, see \cite[Theorem 1.2]{NiNu}, \cite[Proposition 4.4]{MaSa}, and \cite[Lemma 4.3]{BeFeGr} for an extension to more general model manifolds.

\begin{theorem}\label{non-uniq-loc}
Let $\alpha>1$. Let $ \Mb^n $ be a model manifold associated with a model function $\psi$ satisfying \eqref{hp 00 psi}. Suppose that the Lane-Emden equation \eqref{LE} has a positive radial solution for some $q \in \left( \tilde{2}_\alpha , 2^*_\alpha-1 \right) $. Then there exists some $R>0$ such that the Dirichlet problem
\begin{equation}\label{loc-dir}
\begin{cases}
-\Delta u = u^q \, , \ u>0  & \text{in } B_R \, , \\ 
u = 0  & \text{on } \partial B_R \, ,
\end{cases}
\end{equation}
admits at least two radial solutions.
\end{theorem}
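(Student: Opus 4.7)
The plan is to argue by a continuous shooting method on the radial ODE. For each $a>0$, let $U_a$ denote the maximal solution of the Cauchy problem
\[
U_a'' + (n-1) \frac{\psi'(r)}{\psi(r)} U_a' + |U_a|^{q-1} U_a = 0 \, , \qquad U_a(0) = a \, , \quad U_a'(0) = 0 \, ,
\]
which, through \eqref{lap-rad}, is the radial profile of positive solutions of $-\Delta u = u^q$ on $\Mb^n$. Let $R(a) \in (0, +\infty]$ denote the first positive zero of $U_a$, with $R(a) = +\infty$ signifying that $U_a$ remains strictly positive on $[0, +\infty)$ (in which case $U_a$ is a global positive radial solution of \eqref{LE}). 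By the hypothesis, $R(a_0) = +\infty$ where $a_0 := u_\infty(0) > 0$. Standard continuous dependence on the initial datum, together with the transversality of any first zero of a positive solution (otherwise ODE uniqueness applied to the trivial solution would force $U_a \equiv 0$), shows that $R$ is continuous as a map $(0, +\infty) \to (0, +\infty]$ endowed with its one-point compactification; equivalently, $G := R^{-1}(\{+\infty\})$ is closed and $R(a_n) \to +\infty$ whenever $a_n \to a \in G$.

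The central step is to show that $R(a) < +\infty$ for every sufficiently small $a > 0$ and that $R(a) \to +\infty$ as $a \to 0^+$. The natural rescaling $V_a(s) := a^{-1} U_a(s\, a^{-(q-1)/2})$ satisfies $V_a(0) = 1$, $V_a'(0) = 0$ and a transformed ODE whose drift coefficient, evaluated at $r = s a^{-(q-1)/2}$, is (in an averaged sense) comparable under \eqref{hp 00 psi} to the Euclidean one $(N-1)/s$ with $N := \alpha(n-1) + 1$. Since $q < 2^*_\alpha - 1$ is precisely the Sobolev-subcritical threshold for dimension $N$, the Gidas--Spruck/Ni--Nussbaum theory applied to the limit equation $V'' + \tfrac{N-1}{s} V' + V^q = 0$ provides a finite first zero $\rho_\alpha > 0$ for its $V(0) = 1$, $V'(0) = 0$ solution and, more importantly, excludes global positive solutions. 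Consequently a sequence $a_n \to 0^+$ with $R(a_n) = +\infty$ would produce in the limit a global positive radial solution in Euclidean $N$-dimensional space, which is impossible. An ODE stability and comparison argument, carried out on the integrated form $(\psi^{n-1} U_a')' = -\psi^{n-1} U_a^q$ and exploiting the sandwich $\kappa_1 r^\alpha \le \psi(r) \le \kappa_2 r^\alpha$ from \eqref{hp 00 psi}, then upgrades the heuristic to the quantitative asymptotic $R(a) \sim \rho_\alpha a^{-(q-1)/2}$ as $a \to 0^+$.

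With these ingredients the conclusion follows by an elementary intermediate value argument. Since $G$ is closed, bounded away from $0$, and contains $a_0$, the quantity $a_1 := \min\bigl(G \cap (0, a_0]\bigr)$ is well-defined and positive. The restriction of $R$ to the open interval $(0, a_1)$ is continuous and finite, with $R(a) \to +\infty$ both as $a \to 0^+$ (by the previous step) and as $a \to a_1^-$ (by continuity of $R$ into $(0, +\infty]$). Consequently $R$ attains a positive minimum $R_*$ at some interior point $a_* \in (0, a_1)$, and for every $R > R_*$ the intermediate value theorem applied separately on $(0, a_*)$ and $(a_*, a_1)$ produces two distinct values $a^{(1)}, a^{(2)} \in (0, a_1)$ with $R(a^{(1)}) = R(a^{(2)}) = R$; the restrictions of $U_{a^{(1)}}$ and $U_{a^{(2)}}$ to $[0, R]$ are then two distinct positive radial solutions of the Dirichlet problem \eqref{loc-dir}.

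The main obstacle is the scaling limit of the second paragraph: under the weak hypothesis \eqref{hp 00 psi} the log-derivative $\psi'/\psi$ can oscillate and need not converge pointwise to $\alpha/r$, so the rescaled ODE does \emph{not} converge locally uniformly to the Euclidean $N$-dimensional model, and a direct continuous-dependence argument on the coefficients is unavailable. The plan to circumvent this is to work with the integrated form of the equation (in which only $\psi^{n-1}$, and not its log-derivative, appears) and to sandwich $U_a$ between solutions of the two pure-power model equations associated with $\psi_i(r) = \kappa_i r^\alpha$, for each of which the classical Euclidean $N$-dimensional Lane--Emden theory applies directly.
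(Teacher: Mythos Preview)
Your overall shooting strategy is sound and genuinely different from the paper's argument. The paper proceeds by contradiction: assuming uniqueness of the radial Dirichlet solution $u_R$ for every $R>0$, it shows variationally that each $u_R$ is the (unique) Sobolev minimizer with a specific mass, that $I_R\to 0$ as $R\to+\infty$ (because \eqref{hp 00 psi} and $q+1<2^*_\alpha$ force the global embedding $D^{1,2}_{\mathrm{rad}}(\Mb^n)\hookrightarrow L^{q+1}(\Mb^n)$ to \emph{fail}), and then that $R\mapsto u_R(0)$ is a continuous bijection of $(0,+\infty)$ onto itself. This forces every $a>0$ to be the central value of some $u_R$, contradicting the existence of a global positive solution with $u(0)=a_0$. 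The paper thus never needs a rescaling limit or any pointwise information on $\psi'/\psi$.

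Your proposal, by contrast, hinges on the claim that $R(a)<+\infty$ for small $a$ with $R(a)\to+\infty$, and this is where there is a real gap. You correctly identify that under \eqref{hp 00 psi} alone the rescaled drift does not converge to $(N-1)/s$, so the direct limit argument is unavailable. The proposed fix---sandwiching $U_a$ between solutions of the pure-power models $\psi_i(r)=\kappa_i r^\alpha$ via the integrated equation---is not a standard comparison principle and, as stated, does not work. In the integrated form
\[
U'(r)=-\frac{1}{\psi^{n-1}(r)}\int_0^r \psi^{n-1}(s)\,U^q(s)\,ds,
\]
the relevant kernel is the ratio $\bigl(\psi(s)/\psi(r)\bigr)^{n-1}$ for $s<r$, and a two-sided bound $\kappa_1 r^\alpha\le\psi(r)\le\kappa_2 r^\alpha$ does \emph{not} order this ratio relative to $(s/r)^{\alpha(n-1)}$: an oscillating $\psi$ can make the ratio either larger or smaller. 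Consequently there is no monotonicity of the fixed-point map in $\psi$, and one cannot conclude that $U_a$ lies between the two pure-power solutions, let alone that its first zero is sandwiched between theirs. A further (minor) issue is that \eqref{hp 00 psi} holds only for large $r$, so the comparison near the origin would need separate treatment.

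If you want to salvage the ODE route, you would need a genuinely new monotonicity or energy argument that bypasses pointwise control of $\psi'/\psi$; the paper's variational detour through the failure of the Sobolev embedding is precisely what makes this unnecessary.
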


Note that, by combining Theorems \ref{thm: ex int} and \ref{non-uniq-loc}, we can assert that there exist Cartan-Hadamard model manifolds for which \eqref{loc-dir} admits multiple solutions for some $ q \in (1,2^*-1) $ and $R>0$.

\begin{remark}
    In fact, we could have replaced the assumption that $\psi$ satisfies \eqref{hp 00 psi} with the weaker requirement 
    \[
    \frac{\psi'(r)}{\psi(r)} \le \frac{\sigma'(r)}{\sigma(r)} \qquad \forall r >0\,,
    \]
    for some model function $\sigma$ complying with \eqref{hp 00 psi} (compare with Theorem \ref{da capire}). In the statement, we decided to ask directly \eqref{hp 00 psi} for the sake of simplicity.
\end{remark}

\begin{remark}
    Reviewing the proof of Theorem \ref{non-uniq-loc} (see Subsection \ref{sub:nonuniq} ahead), one realizes that the thesis is even stronger: that is, there exists some $ R>0 $ such that problem \eqref{loc-dir} admits two different radial solutions, which are in addition \emph{Sobolev minimizers} of the quotient 
    $$
    \inf_{ f \in H^1_{0,\mathrm{rad}}(B_R) \setminus \{ 0 \} } \frac{\left(\int_0^{R} \left| f' \right|^2 \psi^{n-1} \, dr \right)^{\frac 12}}{\left(\int_0^{R} \left|f\right|^{q+1} \psi^{n-1} \, dr \right)^{\frac{1}{q+1}}} \, .
    $$
\end{remark}

\begin{remark}
    An interesting open problem concerns the structure of the set of all positive (radial) solutions in the cases where existence is known. On the hyperbolic space, we have already recalled that this was settled in \cite{MaSa} and \cite{BoGaGrVa}. In that framework, a key tool seems to be the \emph{uniqueness} of positive finite-energy radial solutions, which is in turn strictly related to the uniqueness of positive radial solutions of the \emph{homogeneous Dirichlet problem} posed in geodesic balls. However, for general $\psi$ such a local uniqueness may fail, as shown in Theorem \ref{non-uniq-loc}. This result, although concerned with a special range of exponents, suggests that the classification of solutions for a general $\psi$ is subtle and challenging.
\end{remark}

To conclude, for the sake of clarity, we present a summary table of the results obtained in the different regimes, considering a generic model manifold $ \Mb^n $ whose model function $\psi$ satisfies \eqref{hp 3 psi} for some $\alpha>1$: 

 \setlength{\tabcolsep}{0.1cm}
\setlength{\cellspacetoplimit}{4pt}
\setlength{\cellspacebottomlimit}{4pt}
\begin{table}[h!]
\centering
\begin{tabular}{|Sc|Sc|Sc|Sc|Sc|}
\hline
Range & \makecell{Existence of nonnegative \\ nontrivial supersolutions}  & \makecell{Existence of positive \\ radial solutions}  & \makecell{Uniqueness of positive \\ radial solutions in balls}  \\ 
\hline 
 $q \in \left(1,\tilde 2_\alpha\right]$      & \it NO & \it NO & \it unknown \\
 $q \in \left(2^*_\alpha-1, 2^*-1\right)$
     & \it YES & \it YES  & \it unknown \\
     $ q=2^*_\alpha-1 $  & \it YES &  \it both YES and NO depending $\psi$ & \it unknown  \\ 
$q \in \left(\tilde 2_\alpha, 2^*_\alpha-1\right)$  & \it YES & \it both YES and NO depending $\psi$ & \it it fails for some $\psi$ \\
\hline
\end{tabular}
\end{table}

\subsection*{Structure of the paper} In Section \ref{sec: strong sub} we focus on the strongly subcritical regime, proving Theorems \ref{thm: supersol}, \ref{thm: non ex str gen}, \ref{soprasol-modelli}, and \ref{prop: str count}. Section \ref{sec: slight} is concerned with the slightly subcritical regime, proving Theorems \ref{da capire} and \ref{exi-crit}. Finally, in Section \ref{sec: int} we prove all of the results regarding the intermediate regime, that is, Theorems \ref{thm: non-ex int}, \ref{thm: ex int}, and \ref{non-uniq-loc}. Examples of model manifolds fulfilling the assumptions of the main theorems in the slightly subcritical and intermediate regimes are provided in Subsections \ref{sub: examples exi-crit} and \ref{sub: examples non-ex}, respectively.


\section{The {strongly subcritical} regime}\label{sec: strong sub}

In the first part of the section we prove the non-existence results, that is, Theorems \ref{thm: supersol} and \ref{thm: non ex str gen}. In the second part, we show the existence of nonnegative nontrivial supersolutions for $q>\tilde 2_\alpha$, namely Theorems \ref{soprasol-modelli} and \ref{prop: str count}.

\subsection{Non-existence of nonnegative nontrivial supersolutions for $q \le \tilde 2_\alpha$}
Although Theorem \ref{thm: supersol} can be deduced from Theorem \ref{thm: non ex str gen}, we provide here a short and direct proof, presented for the sake of clarity. The method is a generalization of the one employed in \cite{QS}, in the proof of Theorem 8.4 therein, within the Riemannian setting.

\begin{proof}[Proof of Theorem \ref{thm: supersol}]
Let $\varphi\in C^\infty_c([0,+\infty))$ be such that 
$$
0\leq \varphi\leq 1\, , \qquad  \varphi'\leq 0 \, , \qquad \varphi\equiv 1 \quad \text{in } [0,1] \, , \qquad \varphi\equiv 0 \quad \text{in } [2,+\infty) \, .
$$
For any $R>r_0$, we introduce the cutoff function 
\begin{equation}\label{v-cutoff}
\phi_R(x) := \varphi^m\!\left(\frac{r(x)}{R}\right) \qquad \forall x \in \Mb^n \, , 
\end{equation}
for some $ m \ge 2 $ to be determined. As is customary, in the sequel, when no ambiguity occurs we will also interpret $\phi_R$ as a purely real function of the variable $r$, without relabeling it. Hence, by explicit computations, recalling \eqref{lap-rad} and using \eqref{bound-lap 0} in combination with the decreasing monotonicity of $\varphi$, we have that for every $r\in [R,2R]$
\begin{equation}\label{phi-R-deriv}
\begin{aligned}
\Delta \phi_R(r) =\,& \phi_R''(r)+(n-1) \, \frac{\psi'(r)}{\psi(r)} \, \phi_R'(r) \\
\geq &\, \frac{m}{R^2} \, \varphi^{m-1}\!\left(\tfrac{r}{R}\right)\varphi''\!\left(\tfrac{r}{R}\right) + \frac{m(m-1)}{R^{2}} \, \varphi^{m-2}\left(\tfrac{r}{R}\right) \left[\varphi'\!\left(\tfrac{r}{R}\right)\right]^2 + m(n-1) \, \frac{K}{rR} \, \varphi^{m-1}\!\left(\tfrac{r}{R}\right)\varphi'\!\left(\tfrac{r}{R}\right).
\end{aligned}
\end{equation}
Taking advantage of the fact that $ \varphi \le 1 $ and $ \varphi',\varphi'' $ are supported in $ [1,2] $, we can deduce from \eqref{phi-R-deriv} that 
\begin{equation*}
- \Delta \phi_R(r) \leq \frac{C}{R^2} \, \varphi
^{m-2} \!\left(\tfrac{r}{R}\right) \chi_{[R,2R]}(r) \, ,
\end{equation*}
for some general positive constant $C>0$ independent of $R$ (which may change below but will not be renamed), where we let $\chi$ denote the characteristic function of a set. Choosing $m= {2q}/{(q-1)}$ in order to have $\varphi^{m-2} = \varphi^{\frac m q}$, we then get 
\begin{equation*}
- \Delta \phi_R(x) \leq \frac{C}{R^2} \, \phi_R^{\frac 1 q}(x) \, \chi_{A_{R,2R}}(x) \, , 
\end{equation*}
where $A_{R,2R}$ stands for the annulus
\begin{equation*}
A_{R,2R}:= \left\{x\in \mathbb{M}^n: \ r(x) \in [R,2R] \right\}.
\end{equation*}
Thanks to this estimate, using $\phi_R$ as a test function in \eqref{distrib} we obtain 
\begin{equation}\label{int-q}
\int _{\mathbb{M}^n} u^q \, \phi_R \, dV \leq -\int _{\mathbb{M}^n} u \, \Delta \phi_R \, dV \leq \frac{C}{R^2} \, \int_{A_{R,2R}} u \, \phi_R^{\frac 1 q} \, dV \, .
\end{equation}
By H\"{o}lder's inequality applied to the rightmost term, and \eqref{bound-psi}, we infer that
\begin{equation}\label{significativa}
\begin{aligned}
\int _{\mathbb{M}^n}u^q \, \phi_R\,dV & \leq \frac{C}{R^2} \left(\int_R^{2R}\psi^{n-1}\,dr\right)^{\frac{q-1}{q}}\left(\int_{A_{R,2R}} u^q \, \phi_R \, dV\right)^{\frac{1}{q}}\\
&\leq C R^{[\alpha(n-1)+1]\frac{q-1}{q}-2} \left(\int_{A_{R,2R}} u^q \, \phi_R\, dV\right)^{\frac{1}{q}} . 
\end{aligned}
\end{equation}
for all $R>0$ sufficiently large. Therefore, it follows that 
\begin{equation}\label{uq-L1}
\int _{\mathbb{M}^n} u^q \, \phi_R \, dV\leq C R^{\alpha(n-1)+1-\frac{2q}{q-1}} \, .
\end{equation}
If $q<\tilde{2}_\alpha$ it is readily seen that $ \alpha(n-1)+1-{2q}/{(q-1)}<0$, so that taking the limit as $R\to + \infty$ gives $u\equiv 0$. If instead $q=\tilde{2}_\alpha$, still from \eqref{uq-L1} we find that $u\in L^q(\mathbb{M}^n)$. Then, going back to \eqref{significativa}, we get
\begin{equation*}
\int _{\mathbb{M}^n} u^q \, \phi_R \, dV \leq C \left(\int_{A_{R,2R}} u^q \, \phi_R\, dV\right)^{\frac{1}{q}} \rightarrow 0 \qquad \text{as $R\to +\infty$} \, .
\end{equation*}
Hence, we can deduce that $u\equiv 0$ in this case as well.
\end{proof}

We now focus on the proof of Theorem \ref{thm: non ex str gen}. To this end, we first recall a classical Laplacian-comparison result, which is key to our strategy.

\begin{proposition}[Theorems 1.11 and 1.13 in \cite{MRS}]\label{LC}
    Let $ \Mb^n $ be a complete noncompact Riemannian manifold, and let $o \in \Mb^n$. Suppose that 
    \[
      \mathrm{Ric}_o(x) \ge -(n-1) \, \frac{\psi''(r)}{\psi(r)} \qquad \forall x \in \Mb^n \setminus \mathrm{cut}(o) \setminus \{o\}
      \]
     for some model function $ \psi \in C^2([0,+\infty)) $ such that $ \psi''(r)/\psi(r) $ can be continuously extended to $r=0$. Then
        \[
      \Delta r \le (n-1) \, \frac{\psi'(r)}{\psi(r)} \qquad \text{in } \Mb^n \, ,
      \]
    in the sense of distributions (or, equivalently, Radon measures), and 
            \[
      V(B_R(o)) \le \omega_{n-1} \, \int_0^R \psi^{n-1} \, dr  \qquad \forall R>0 \, ,
      \]
    where $\omega_{n-1}$ is the Riemannian volume of the $(n-1)$-dimensional unit sphere.
\end{proposition}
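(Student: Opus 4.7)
This is a classical comparison-geometry result, and I would prove it along the standard three-step scheme: derive a pointwise Riccati-type inequality for $\Delta r$ on the smooth locus, extend it distributionally across $\mathrm{cut}(o)$, and then integrate to obtain the volume bound.

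On $\Mb^n \setminus \mathrm{cut}(o) \setminus \{o\}$ the distance function $r$ is smooth with $|\nabla r|=1$. Setting $m(r,\theta) := \Delta r$ along a radial unit-speed geodesic, the Bochner identity applied to $r$ gives $0 = \|\mathrm{Hess}\,r\|^2 + \partial_r m + \mathrm{Ric}(\nabla r,\nabla r)$. Since $|\nabla r|=1$ forces $\mathrm{Hess}\,r(\nabla r,\cdot)=0$, the Hessian acts nontrivially only on the $(n-1)$-dimensional orthogonal complement of $\nabla r$ with trace $m$; hence Cauchy--Schwarz gives $\|\mathrm{Hess}\,r\|^2 \ge m^2/(n-1)$, which together with the Ricci hypothesis yields the Riccati inequality
\[
\partial_r m + \frac{m^2}{n-1} \le -\mathrm{Ric}(\nabla r,\nabla r) \le (n-1) \, \frac{\psi''(r)}{\psi(r)} \, .
\]
A direct computation shows that $m_\psi(r):=(n-1)\psi'(r)/\psi(r)$ turns this into an equality, and the common asymptotics $m(r,\theta), m_\psi(r) \sim (n-1)/r$ as $r \to 0^+$ (from the local Euclidean structure at $o$ together with $\psi'(0)=1$ and $\psi(r)\sim r$) match as initial data. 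A standard Riccati/Gr\"onwall comparison then gives $m(r,\theta) \le m_\psi(r)$ throughout the smooth locus.

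To upgrade this to a distributional inequality on all of $\Mb^n$ I would use the classical Calabi barrier trick: for any $x_0 \in \mathrm{cut}(o)$, any minimizing geodesic from $o$ to $x_0$, extended slightly past $x_0$, is a smooth upper support function for $r$ near $x_0$, and the pointwise bound transfers to it; equivalently, the singular part of $\Delta r$ supported on $\mathrm{cut}(o)$ is a nonpositive Radon measure, so the one-sided inequality survives. This is the step I expect to be the main technical obstacle, though the argument is by now classical. Granted the distributional bound, and denoting by $A(R)$ the area of $\partial B_R(o)$, the coarea formula $V(B_R)=\int_0^R A(s)\,ds$ together with the divergence theorem yields
\[
A'(R) \le \int_{\partial B_R} \Delta r \, d\sigma \le (n-1) \, \frac{\psi'(R)}{\psi(R)} \, A(R)
\]
in the sense of distributions on $(0,+\infty)$. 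Hence $A(R)/\psi^{n-1}(R)$ is non-increasing; since both $A(R)$ and $\omega_{n-1}\psi^{n-1}(R)$ are asymptotic to $\omega_{n-1} R^{n-1}$ as $R \to 0^+$, the limit of the ratio at $0$ equals $\omega_{n-1}$, whence $A(R)\le \omega_{n-1}\psi^{n-1}(R)$; integrating in $R$ gives the stated volume bound.
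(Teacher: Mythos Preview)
The paper does not give its own proof of this proposition: it is quoted verbatim as Theorems 1.11 and 1.13 of \cite{MRS}, so there is no in-paper argument to compare against. Your outline is the standard Bishop--Gromov/Laplacian-comparison scheme (Riccati inequality from Bochner, extension across $\mathrm{cut}(o)$, then integrate), which is precisely the route taken in \cite{MRS}; in that sense your proposal matches the cited source.

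One imprecision worth flagging: your description of the Calabi trick is garbled. A geodesic ``extended slightly past $x_0$'' is a curve, not an upper support function, and extending past the cut point is not what one does. The correct barrier is built by choosing a point $o_\varepsilon$ on a minimizing geodesic from $o$ to $x_0$ at distance $\varepsilon$ from $o$ and using $r_\varepsilon(x):=\varepsilon + \mathrm{d}(o_\varepsilon,x)$; the triangle inequality gives $r_\varepsilon\ge r$ with equality at $x_0$, and $x_0\notin\mathrm{cut}(o_\varepsilon)$ so $r_\varepsilon$ is smooth there. Your alternative formulation --- that the singular part of $\Delta r$ on $\mathrm{cut}(o)$ is a nonpositive measure --- is correct and is exactly the distributional statement proved in \cite{MRS}, so the gap is cosmetic rather than substantive. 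The final step (monotonicity of $A(R)/\psi^{n-1}(R)$ and integration) is fine.
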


In the next lemma, we assume for simplicity that $ B_\mathsf{R}(o) $ is a smooth domain for all $\mathsf{R}>0$. Clearly, this is not always the case, but there is no loss of generality to our purposes (similar comments to \cite[Proof of Theorem 2.1]{BS} apply). Incidentally, it is always true if $\Mb^n$ is a Cartan-Hadamard manifold.

\begin{lemma}\label{lemma-approx-loc}
   Let $ q>1 $ and let $ \Mb^n $ be a complete noncompact Riemannian manifold. Suppose that $ u \ge 0 $ satisfies \eqref{distrib} in the sense of distributions. Let $o \in \Mb^n$. Then, for every $\mathsf{R}>0$, there exist sequences of nonnegative functions $ u_k , \mu_k \in C^\infty(B_\mathsf{R}(o)) $ such that 
   \[
       -\Delta u_k = \mu_k \qquad \text{in } B_\mathsf{R}(o)  \, ,
       \]
      \begin{equation}\label{approx-k-2}
   \lim_{k \to \infty} \int_{B_\mathsf{R}(o)} (-\Delta u_k) \, \varphi \, dV \ge \int_{B_\mathsf{R}(o)} u^q \, \varphi \, dV   \qquad \forall \varphi \in C_c(B_\mathsf{R}(o)): \ \varphi \ge 0 \, ,
   \end{equation} 
   and
   \begin{equation}\label{approx-k-3}
   \lim_{k \to \infty} u_k = u    \qquad \text{in } L^1_{\mathrm{loc}}(B_\mathsf{R}(o)) \,.
   \end{equation}
\end{lemma}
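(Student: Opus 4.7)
The plan is to build the approximants via the Dirichlet heat semigroup on a slightly larger ball and then to extract $\mu_k = -\Delta u_k$ explicitly through Green's identity, obtaining an interior contribution (a smoothing of $\mu$) plus a manifestly nonnegative boundary contribution. First I would set $\mu := -\Delta u$ in the sense of distributions, which by the hypothesis $-\Delta u \ge u^q \ge 0$ is a nonnegative Radon measure on $\Mb^n$ satisfying $\mu \ge u^q \, dV$ as measures (so, in particular, $u \in L^q_{\mathrm{loc}}(\Mb^n)$). I would then fix $\mathsf{R}' \in (\mathsf{R}, 2\mathsf{R})$ such that $\Omega := B_{\mathsf{R}'}(o)$ is smooth (this is the lemma's standing simplification) and $u$ admits an $L^1$-trace on $\partial \Omega$; by Fubini, this holds for a.e.\ such $\mathsf{R}'$. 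Let $p_t = p^{\Omega}_t(x,y)$ denote the Dirichlet heat kernel on $\Omega$, which is smooth, symmetric, strictly positive for $t > 0$, and vanishes at $\partial\Omega$ in either variable.

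For a sequence $t_k \downarrow 0$, I define
\[
u_k(x) := \int_{\Omega} p_{t_k}(x, y)\, u(y)\, dV(y), \qquad x \in \Omega,
\]
so that $u_k \in C^\infty(\Omega)$ and $u_k \ge 0$. Using the symmetry of $p_t$ together with the heat equation $\Delta_x p_t = \Delta_y p_t$, and Green's identity on $\Omega$ with $p_{t_k}(x,\cdot)|_{\partial\Omega}=0$, I would derive
\[
\mu_k(x) := -\Delta u_k(x) = \int_{\Omega} p_{t_k}(x,y)\, d\mu(y) + \int_{\partial\Omega} u(y)\, \bigl[-\partial_\nu p_{t_k}(x, y)\bigr]\, dS(y),
\]
with $\nu$ the outward unit normal. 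Both summands are smooth and nonnegative, the second because $p_{t_k}$ vanishes on $\partial\Omega$ and is positive inside, so $-\partial_\nu p_{t_k}\ge 0$ is the Poisson kernel. Hence $\mu_k \in C^\infty(\Omega)$, $\mu_k\ge 0$, and $-\Delta u_k = \mu_k$ classically in $\Omega \supset B_\mathsf{R}(o)$.

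The $L^1_{\mathrm{loc}}$-convergence $u_k \to u$ asked in \eqref{approx-k-3} follows from strong continuity of the Dirichlet heat semigroup on $L^1(\Omega)$. To check \eqref{approx-k-2}, take $\varphi \in C_c(B_\mathsf{R}(o))$ with $\varphi \ge 0$ and extend it by zero to $\Omega$; Fubini and the previous formula yield
\[
\int_{B_\mathsf{R}(o)}(-\Delta u_k)\, \varphi \, dV = \int_{\Omega} P_{t_k}\varphi \, d\mu + \int_{\partial\Omega} u\, \bigl(-\partial_\nu P_{t_k}\varphi \bigr) \, dS,
\]
with $P_{t_k}\varphi(y) := \int p_{t_k}(x,y)\varphi(x)\, dV(x)$. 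As $t_k \downarrow 0$, $P_{t_k}\varphi \to \varphi$ uniformly on $\overline{\Omega}$ (since $\varphi$ is continuous and supported away from $\partial\Omega$), so the first term tends to $\int \varphi\, d\mu \ge \int u^q \varphi \, dV$ by local finiteness of $\mu$. Since $\mathrm{supp}(\varphi)$ sits at positive distance $\mathsf{R}' - \mathsf{R}$ from $\partial\Omega$, standard Gaussian upper bounds on the Dirichlet heat kernel and its normal derivative give $|\partial_\nu P_{t_k}\varphi(y)| \le C\, e^{-c/t_k}$ uniformly in $y \in \partial\Omega$, and the $L^1$-trace of $u$ then forces the boundary term to vanish. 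This gives \eqref{approx-k-2}, in fact with the stronger $\lim$ in place of $\liminf$.

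The main obstacle is the rigorous justification of the Green-identity derivation of $\mu_k$: $\mu$ is only a distribution and $p_{t_k}(x, \cdot)$, though smooth on $\overline{\Omega}$, is not compactly supported in $\Omega$, so it cannot immediately be tested against the distributional identity $-\Delta u = \mu$. The remedy is to test against $\chi_\delta \, p_{t_k}(x, \cdot)$ with $\chi_\delta \in C_c^\infty(\Omega)$ a cutoff going to $1$, then send $\delta \to 0$, carefully tracking the terms produced by $\nabla \chi_\delta$ and $\Delta \chi_\delta$; these concentrate on $\partial\Omega$ and, together with the $L^1$-trace of $u$ there, reproduce precisely the Poisson-kernel boundary integral.
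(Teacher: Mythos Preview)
Your approach via the Dirichlet heat semigroup is genuinely different from the paper's, which is purely elliptic: the paper approximates the measure $\mu = -\Delta u$ by smooth $\mu_k \in C_c^\infty(B_\mathsf{R})$, solves the Dirichlet problem $-\Delta u_k = \mu_k$ in $B_\mathsf{R}$ with boundary datum $u|_{\partial B_\mathsf{R}}$ (invoking Boccardo--Gallou\"et regularity so that $u \in W^{1,p}$ for $p<n/(n-2)$ and the trace is well defined), and then uses critical elliptic estimates plus uniqueness of the limiting Dirichlet problem to obtain $u_k \to u$ in $L^1_{\mathrm{loc}}$. Your parabolic smoothing gives the $L^1_{\mathrm{loc}}$-convergence for free via semigroup continuity, at the cost of a more delicate boundary analysis.

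The gap is exactly where you place it, and your remedy is not quite sufficient as stated. The Fubini-slice ``$L^1$-trace'' only says that, for a.e.\ $\mathsf{R}'$, some representative of $u$ restricts to an integrable function on $\partial B_{\mathsf{R}'}$; this does not by itself justify the cutoff passage to the limit, which requires that the averages of $u$ over shrinking $\delta$-collars converge in $L^1(\partial\Omega)$ to that restriction. The standard route is a genuine Sobolev trace, i.e.\ $u \in W^{1,1}$ near $\partial\Omega$, and for that you would end up invoking the same Boccardo--Gallou\"et regularity the paper uses anyway. Alternatively, you can bypass the Green identity entirely: since $u \ge 0$ is superharmonic, it is excessive for the killed process on $\Omega$, so $t \mapsto P_t^\Omega u$ is nonincreasing and hence $\mu_k = -\Delta u_k = -\partial_t P_t^\Omega u\big|_{t=t_k} \ge 0$ directly; then \eqref{approx-k-2} follows for $\varphi \in C_c^\infty$ by integration by parts and $u_k \to u$ in $L^1_{\mathrm{loc}}$, and extends to $\varphi \in C_c$ via the uniform local bound $\int_K \mu_k \le \|\Delta\chi\|_\infty \sup_k\|u_k\|_{L^1(\mathrm{supp}\,\chi)}$ for a suitable cutoff $\chi \equiv 1$ on $K$.
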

\begin{proof}
First of all, we notice that the supersolution inequality \eqref{distrib}, in combination with the non-negativity of $u$, implies that $ -\Delta u $ is a nonnegative distribution on $\Mb^n$, therefore by standard facts in distribution theory it is actually a nonnegative Radon measure, which we call $ \mu $. Hence, by the local regularity of Radon measures, for every $ \mathsf{R}>0 $ we can pick a sequence of nonnegative functions $ \mu_k \in C^\infty_c(B_\mathsf{R}(o)) $ such that 
      \begin{equation}\label{approx-proof}
   \lim_{k \to \infty} \int_{B_\mathsf{R}(o)}  \varphi \, \mu_k \, dV = \int_{B_\mathsf{R}(o)} \varphi \, d\mu   \qquad \forall \varphi \in C_c(B_\mathsf{R}(o)) \, .
   \end{equation} 
Thanks to critical elliptic regularity, see for instance \cite[Theorem 2 in Section III]{BG} (note that this result is concerned with the solution $u_0$ of the homogeneous Dirichlet problem, but since the $u-u_0 $ is harmonic then $u$ inherits the same regularity as $u_0$), we have that $ u \in W^{1,p}(B_\mathsf{R}(o)) $ for all $ 1 \le p< n/(n-2) $. In particular, the trace of $ u $ on $  \partial B_\mathsf{R}(o)$ is always well defined as a nonnegative $ L^p(\partial B_\mathsf{R}(o)) $ function (at least). Let us therefore consider, for every $  k \in \mathbb{N} $, the following elliptic Dirichlet problem:
   \[
   \begin{cases}
       -\Delta u_k = \mu_k & \text{in } B_\mathsf{R}(o)  \, , \\
       u_k = u \vert_{\partial B_\mathsf{R}(o)} & \text{on } \partial B_\mathsf{R}(o) \, .
       \end{cases}
       \]
Interior elliptic estimates ensure that $ u_k $ is smooth in $B_\mathsf{R}(o)$, and the maximum principle guarantees that $ u_k \ge 0 $. Moreover, still from the critical elliptic theory, there exists a constant $ C_{r,\mathsf{R}}>0 $ (independent of $k$) such that 
$$
\left\| u_k \right\|_{W^{1,p}(B_r(o))} \le C_{r,\mathsf{R}} \left( \left\| \mu_k \right\|_{L^1(B_\mathsf{R}(o))} + 1 \right)
$$
for all $ 0<r<\mathsf{R} $. Since the right-hand side is bounded in $k$, we have that $ \{ u_k \} $ converges strongly (up to subsequences) in $ L^p_{\mathrm{loc}}(B_\mathsf{R}(o)) $ to some function $v\ge0$, which satisfies  
   \[
   \begin{cases}
       -\Delta v = \mu & \text{in } B_\mathsf{R}(o)  \, , \\
       v = u \vert_{\partial B_\mathsf{R}(o)} & \text{on } \partial B_\mathsf{R}(o) \, ,
       \end{cases}
       \]
at least in the very weak sense. On the other hand, such a problem admits $u$ as its unique solution, thus $v=u$ and \eqref{approx-k-3} is proved. As for \eqref{approx-k-2}, it is a direct consequence of \eqref{approx-proof}, the definition of $u_k$, and the fact that $ \mu \ge u^q \, V  $ by construction. 
\end{proof}

\begin{proof}[Proof of Theorem \ref{thm: non ex str gen}]
First of all, let us show that the curvature assumptions \eqref{ricci-bound-1} and \eqref{ricci-bound-2} can be written in a more convenient way that will allow us to appeal to Proposition \ref{LC}. To this end, similarly to \cite[Lemma 4.2]{GMV}, consider the following (continuous) function:
$$
G(r) := 
\begin{cases}
  K  & \text{if } r \in [0,r_o] \, ,\\
  \frac{Q}{4r_o^3} \, (r-r_o) + \frac{K}{r_o} \, (2r_o-r) & \text{if } r \in (r_o,2r_o) \, , \\
  \frac{Q}{r^2}  & \text{if } r \ge 2r_o \, ,
\end{cases}
$$
where $Q$ is as in \eqref{ricci-bound-1} and we define 
$$ 
K := \max\left\{- \tfrac{1}{n-1} \min_{x \in B_{r_o}(o) \setminus \mathrm{cut}\{ o \} } \mathrm{Ric}_o(x) \, , \, \frac{Q}{r_o^2} \right\} .
$$
Then, it is readily seen that with these choices it holds
$$
\mathrm{Ric}_o (x) \ge - (n-1) \, \frac{\psi''(r)}{\psi(r)} \qquad \forall x \in \Mb^n \setminus \mathrm{cut}(o) \setminus \{ o \} \, ,
$$
where $ \psi $ is the unique solution of the ODE problem
\begin{equation}\label{ODEg}
\begin{cases}
   \psi''(r) = G(r) \, \psi(r) & \text{for } r>0 \, , \\
   \psi'(0) = 1 \, , &  \\
   \psi(0) = 0 \, . & 
\end{cases}
\end{equation}
Because $G \ge 0$ is a purely reciprocal quadratic function for $ r \ge 2r_o $, and the differential equation in \eqref{ODEg} is linear, as observed in \cite[Subsection 8.1]{GMV}, we can assert that there exist two constants $ a_1>0 $ and $ a_2 \in \R $ such that   
\begin{equation}\label{psi-approx}
   \psi(r) = a_1 \, r^{\frac{1+\sqrt{1+4Q}}{2}} + a_2 \, r^{\frac{1-\sqrt{1+4Q}}{2}}  \qquad \forall r \ge 2r_o \, .
\end{equation}
As a result, it is plain that 
\[
\frac{\psi'(r)}{\psi(r)} \le \frac{c_0}{r} \qquad \forall  r \ge 2 r_o
\]
and
\[
\int_0^r \psi^{n-1} \, ds \le c_1 \, r^{(n-1)\frac{1+\sqrt{1+4Q}}{2}+1} \qquad \forall r \ge 2r_o \, ,
\]
for suitable constants $c_0,c_1>0$. We are therefore in a position to apply Proposition \ref{LC}, which entails 
\begin{equation}\label{LC-1}
  \Delta r  \le \frac{c_0}{r} \qquad \text{in } B^c_{2r_o}(o)
\end{equation}
and
\begin{equation}\label{LC-2}
 V(B_R(o))  \le c_2 \, R^{(n-1)\frac{1+\sqrt{1+4Q}}{2}+1} \qquad \forall R \ge 2r_o \, ,
\end{equation}
for another suitable constant $c_2>0$. We have thus shown that under the leftmost bound in \eqref{ricci-bound-1} estimates \eqref{LC-1}--\eqref{LC-2} hold; since \eqref{ricci-bound-2} is just \eqref{ricci-bound-1} with $Q=\alpha(\alpha-1)$, in this case \eqref{LC-2} reads 
\begin{equation}\label{LC-2-bis}
  V(B_R(o))  \le c_2 \, R^{\alpha(n-1)+1} \qquad \forall R \ge 2r_o \, ,
\end{equation}
which corresponds to the rightmost bound in \eqref{ricci-bound-1}. In summary, we can and will from now on assume that both \eqref{LC-1} and \eqref{LC-2-bis} are satisfied. 

Next, let us go back to the proof of Theorem \ref{thm: supersol}, and consider the smooth radial cutoff function $ \phi_R $ defined as in \eqref{v-cutoff}. The same computation, written in the current more general framework, yields 
\begin{equation}\label{lap-vR}
\Delta \phi_R(r) = \frac{m}{R} \, \varphi'\!\left( \tfrac r R \right) \varphi^{m-1}\!\left( \tfrac r R \right) \Delta r + \frac{m}{R^2} \left[ \varphi'' \varphi + (m-1) (\varphi')^2 \right]\!\left( \tfrac r R \right) \varphi^{m-2}\!\left( \tfrac r R \right) \left| \nabla r \right|^2  .
\end{equation}
Note that the above expression is consistent, since $ m \ge 2 $, $ \Delta r $ is a Radon measure, and $  x \mapsto r(x) $ is Lipschitz with $ |\nabla r(x)| =1 $ almost everywhere. Since $ \varphi $ is smooth and $ \varphi' \le 0 $, also recalling the cutoff properties of $ \phi_R $, from \eqref{LC-1} and \eqref{lap-vR} it follows that
\begin{equation}\label{lap-vR-bis}
- \Delta \phi_R  \le \frac{C}{R^2} \, \phi_R^{\frac{m-2}{m}} \chi_{B_{2R}(o) \setminus B_R(o)} \, ,
\end{equation}
still in the sense of measures, for some $C>0$ independent of $R>2 r_o$. Now, we test \eqref{lap-vR-bis} with the sequence $ \{ u_k \} $ provided by Lemma \ref{lemma-approx-loc} (for some fixed $ \mathsf{R}>2R $), obtaining 
\[
- \int_{B_{\mathsf{R}}(o)}  \Delta u_k \, \phi_R  \, dV \le \frac{C}{R^2} \, \int_{B_{2R}(o) \setminus B_R(o)} u_k \, \phi_R^{\frac{m-2}{m}} \, dV \, ;
\]
passing to the limit as $ k \to \infty $, taking advantage of \eqref{approx-k-2}--\eqref{approx-k-3}, we end up with 
\[
\int_{\Mb^n}  u^q \, \phi_R  \, dV \le \frac{C}{R^2} \, \int_{B_{2R}(o) \setminus B_R(o)} u \, \phi_R^{\frac{m-2}{m}} \, dV \, .
\]
Upon choosing again $ m=2q/(q-1) $, we observe that this is exactly \eqref{int-q}. Hence, interpolating as in \eqref{significativa} and taking advantage of \eqref{LC-2-bis}, we find 
\begin{equation}\label{lap-vR-5}
\begin{aligned}
\int _{\Mb^n} u^q \, \phi_R\,dV \leq & \, \frac{C}{R^2} \left[ V(B_{2R}(o)) \right]^{\frac{q-1}{q}}\left(\int_{B_{2R}(o) \setminus B_R(o)} u^q \, \phi_R \, dV\right)^{\frac{1}{q}} \\
\leq & \, \widehat{C} R^{[\alpha(n-1)+1]\frac{q-1}{q}-2} \left(\int_{B_{2R}(o) \setminus B_R(o)} u^q \, \phi_R \, dV\right)^{\frac{1}{q}} ,
\end{aligned}
\end{equation}
for another $ \widehat{C}>0 $ independent of $R \ge 2r_o$. Having established \eqref{lap-vR-5}, the thesis follows exactly as in the last part of the proof of Theorem \ref{thm: supersol}.
\end{proof}

\subsection{Existence of nonnegative nontrivial supersolutions for $q >\tilde 2_\alpha$} 

\begin{proof}[Proof of Theorem \ref{soprasol-modelli}]
As stated, our candidate supersolution is purely radial, having the form 
     \begin{equation*}
         w(r) = \frac{A}{\left(B+r^2\right)^{\frac{1}{q-1}}}
     \end{equation*}
     for suitable constants $ A,B>0 $ to be determined. Hence, asking that $ w $ is a supersolution to \eqref{LE} amounts to the validity of 
     \begin{equation}\label{rad-supsol}
     -w''(r) - (n-1) \, \frac{\psi'(r)}{\psi(r)} \, w'(r) \ge w^q(r) \qquad \forall r>0 \, .
     \end{equation}
Let us fix $ \varepsilon>0 $ so small that 
\begin{equation}\label{eps-small}
 (\alpha-\varepsilon)(n-1) + 1 -\frac{2q}{q-1} > 0 \, ,
\end{equation}
which is feasible since $ q >\tilde 2_\alpha $. At the same time, owing to \eqref{bound-lap}, we can choose $ r_\varepsilon $ large enough such that 
     \beq\label{bound-lap-eps}
\frac{\psi'(r)}{\psi(r)}  \ge \frac{\alpha - \varepsilon}{r} \qquad \forall r \ge r_\varepsilon \, .
     \eeq
We will now consider \eqref{rad-supsol} in the regions $ \left[r_\varepsilon, +\infty \right) $ and $ \left( 0 , r_\varepsilon \right) $ separately. By direct computations, we have
$$
w'(r) = -\frac{2}{q-1} \cdot \frac{A \, r}{\left(B+r^2\right)^{\frac{q}{q-1}}} \, , \qquad w''(r) = -\frac{2}{q-1} \cdot \frac{A}{\left(B+r^2\right)^{\frac{q}{q-1}}} + \frac{4q}{(q-1)^2} \cdot \frac{A \, r^2}{\left(B+r^2\right)^{\frac{q}{q-1}+1}} \, .
$$
As a result, after some algebraic simplifications, we find that $w$ is a supersolution in $ \left[r_\varepsilon, +\infty \right) $ provided
$$
  \frac{2}{q-1}\left(1 - \frac{2q}{q-1} \cdot \frac{r^2}{B+r^2} + (\alpha - \varepsilon)(n-1)\right) \ge A^{q-1} \qquad \forall r \ge r_\varepsilon \, ,
$$
where we used \eqref{bound-lap-eps} combined with the fact that $ w' \le 0 $; the previous inequality is implied by 
   \beq\label{choice-A}
\frac{2}{q-1} \left[  1 -\frac{2q}{q-1} +(\alpha-\varepsilon)(n-1) \right] \ge  A^{q-1} \, .
\eeq 
By virtue of \eqref{eps-small}, it is possible to select $A$ so small that \eqref{choice-A} holds. Next, let us deal with the inner region $ \left(0,r_\varepsilon\right) $. In order for \eqref{rad-supsol} to be satisfied there, it is enough to ask 
   \beq\label{choice-B}
  \frac{2}{q-1} - \frac{4q}{(q-1)^2} \cdot \frac{r^2}{B+r^2}  \ge A^{q-1} \qquad \forall r \in \left(0,r_\varepsilon\right) ,
\eeq
where we neglected the second term on the left-hand side since it is nonnegative, thanks to $w'\le 0$ and the assumption $ \psi' \ge 0 $. Noticing that \eqref{choice-B} holds if and only if 
   \beq\label{choice-C}
\frac{2}{q-1} - \frac{4q}{(q-1)^2} \cdot \frac{r_\varepsilon^2}{B+r_\varepsilon^2}  \ge A^{q-1} \, ,
\eeq
it is plain that one can finally choose $B$ large enough (depending only on $ q,r_\varepsilon $) and $A$ small enough (depending only on $ n,\alpha,q,\varepsilon $) in such a way that both \eqref{choice-A} and \eqref{choice-C} hold.
\end{proof}

Before proceeding with the proof of Theorem \ref{prop: str count}, we need to recall the following counterpart of Proposition \ref{LC}, dealing with curvature bounds from above (see \emph{e.g.}~\cite[Subsection 2.2]{GMV} and references therein). 
\begin{proposition}[Theorem 1.16 in \cite{MRS} and subsequent Remark]\label{LC-ch}
Let $ \Mb^n $ be a Cartan-Hadamard manifold, and let $o \in \Mb^n$. Suppose that 
\begin{equation}\label{sec-above}
       \mathrm{Sec}_{\omega}(x) \leq - \frac{\psi''(r)}{\psi(r)}\qquad \forall x \in \Mb^n \setminus \{ o \} 
    \end{equation}
   for all planes $ \omega \subset T_x(\Mb^n) $ containing the radial vector $dr$,  for some $ \psi \in C^2([0,+\infty)) $, satisfying \eqref{hp 1 psi}, such that $ \psi''(r)/\psi(r) $ can be continuously extended to $r=0$. Then 
 \begin{equation}\label{sec-above-2}
\Delta r = m(r,\theta) \geq (n-1) \, \frac{\psi'(r)}{\psi(r)} \qquad  \forall x \equiv (r,\theta) \in  (0,+\infty)\times \mathbb{S}^{n-1} \, . 
     \end{equation}
\end{proposition}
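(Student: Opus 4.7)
The plan is to exploit the two features that Cartan–Hadamard manifolds provide for free: the exponential map $\exp_o$ is a global diffeomorphism, so $r(x) = \mathrm{d}(x,o)$ is smooth on $\Mb^n \setminus \{o\}$, and there is no cut locus to worry about. Consequently $\Delta r$ is a genuine smooth function on $\Mb^n \setminus \{o\}$, and it coincides with the trace (over an orthonormal frame of $T_x\Mb^n$) of the Hessian $\nabla^2 r$. Since $\nabla^2 r (\nabla r, \cdot) \equiv 0$ (a standard consequence of $|\nabla r| \equiv 1$), the restriction of $\nabla^2 r$ to $(\nabla r)^\perp$ is precisely the shape operator $S$ of the geodesic sphere $\partial B_r$, and $\Delta r = \operatorname{tr} S$.

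Next, I would pick an arbitrary point $x_0 \equiv (r_0,\theta_0)$ and let $\gamma:[0,r_0]\to\Mb^n$ be the unit-speed minimizing geodesic from $o$ to $x_0$. Along $\gamma$, the shape operator $S(t)$ obeys the matrix Riccati equation
\begin{equation*}
S'(t) + S(t)^2 + R(t) = 0 \, , \qquad S(t) \sim \tfrac{1}{t}\,\mathrm{Id} \ \ \text{as } t \to 0^+ \, ,
\end{equation*}
where $R(t) : (\dot\gamma(t))^\perp \to (\dot\gamma(t))^\perp$ is the curvature endomorphism $v \mapsto R(v,\dot\gamma)\dot\gamma$. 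The eigenvalues of $R(t)$ are exactly the radial sectional curvatures at $\gamma(t)$ of the $2$-planes $\operatorname{span}\{v,\dot\gamma\}$, and hence assumption \eqref{sec-above} gives the operator inequality $R(t) \le -(\psi''(t)/\psi(t))\,\mathrm{Id}$.

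The key comparison step is then scalar rather than matrix. For any unit vector $v_0 \in (\dot\gamma(r_0))^\perp$, I would transport it parallelly along $\gamma$ to obtain a field $V(t)$, set $h(t) := \langle S(t)V(t), V(t)\rangle$, and use $|V| \equiv 1$ together with the Riccati equation to derive
\begin{equation*}
h'(t) + h(t)^2 + \langle R(t)V(t), V(t)\rangle \le 0 \, , \qquad h(t) \sim \tfrac{1}{t} \ \ \text{as } t \to 0^+ \, ,
\end{equation*}
where I have employed $\langle S^2 V,V\rangle \ge \langle SV,V\rangle^2$ (Cauchy–Schwarz applied to the self-adjoint operator $S$, which is positive near $0$ and stays so on a Cartan–Hadamard manifold thanks to $R \le 0$). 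Using the curvature upper bound, $h$ becomes a subsolution of the scalar Riccati ODE satisfied by the model ratio $\rho(t) := \psi'(t)/\psi(t)$, namely $\rho' + \rho^2 = -\psi''/\psi$, with the same singular initial behavior. A Sturm–Liouville/Riccati comparison — most transparently carried out by passing to $u(t)$ with $u'/u = h$ and $\varphi(t) = \psi(t)$ and comparing zeros via the Wronskian $u'\varphi - u\varphi'$ — yields $h(t) \ge \rho(t)$ for all $t \in (0, r_0]$. Evaluating at $t = r_0$ and then summing over an orthonormal basis of $(\dot\gamma(r_0))^\perp$ gives $\Delta r(x_0) = \operatorname{tr} S(r_0) \ge (n-1)\,\psi'(r_0)/\psi(r_0)$, which is exactly \eqref{sec-above-2}.

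The main obstacle is the scalar reduction of the matrix Riccati inequality: the inequality $\langle S^2 V,V\rangle \ge \langle SV,V\rangle^2$ for a unit $V$ requires $S$ to be self-adjoint (which follows from $S = \nabla^2 r|_{(\nabla r)^\perp}$) and the Cauchy–Schwarz step to be applied correctly, while the Sturm-type argument requires verifying that the comparison model function $\psi$ stays positive on $(0,r_0]$ — this is guaranteed by the hypotheses on $\psi$ together with the Cartan–Hadamard assumption, which forbids focal/conjugate points. Once these ingredients are in place, the conclusion is immediate from tracing.
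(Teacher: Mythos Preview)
The paper does not prove this proposition; it is quoted from \cite[Theorem 1.16]{MRS}. Your sketch, however, contains a genuine gap in the scalar reduction step.

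From the matrix Riccati equation $S'+S^2+R=0$ and a parallel unit field $V$, the identity $h'+\langle S^2V,V\rangle+\langle RV,V\rangle=0$ holds with $h=\langle SV,V\rangle$. Your Cauchy--Schwarz step $\langle S^2V,V\rangle\ge h^2$ is correct, but it yields $h'+h^2+\langle RV,V\rangle\le 0$, i.e.\ $h'+h^2\le -\mathrm{Sec}$. The curvature hypothesis gives $-\mathrm{Sec}\ge \psi''/\psi$, so you only obtain
\[
h'+h^2 \ \le\ -\mathrm{Sec} \ \ge\ \psi''/\psi = \rho'+\rho^2 ,
\]
which does \emph{not} chain into any inequality between $h'+h^2$ and $\rho'+\rho^2$. (Note also a sign slip: $\rho'+\rho^2=+\psi''/\psi$, not $-\psi''/\psi$.) For Riccati comparison to yield $h\ge\rho$ you would need the \emph{supersolution} inequality $h'+h^2\ge\rho'+\rho^2$; the Cauchy--Schwarz reduction instead makes $h$ a subsolution of an ODE with a \emph{larger} right-hand side, which says nothing. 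This scalarization trick is exactly what proves the companion result (Ricci lower bound $\Rightarrow$ upper bound on $\Delta r$, Proposition~\ref{LC}), but it goes the wrong way here.

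The standard route to the Hessian lower bound under a sectional curvature \emph{upper} bound is genuinely matrix-level: either the matrix Riccati comparison (if $R_1\le R_2$ as operators and $S_i'+S_i^2+R_i=0$ with the same $t^{-1}I$ asymptotics, then $S_1\ge S_2$; see e.g.\ Eschenburg--Heintze), or equivalently Rauch's first comparison theorem for Jacobi fields. One may also argue with the \emph{smallest} eigenvalue $\lambda_{\min}(S)$, for which the eigenvector identity $SV=\lambda_{\min}V$ turns $\langle S^2V,V\rangle$ into $\lambda_{\min}^2$ with equality and produces the needed supersolution inequality in the viscosity/Dini sense; but this is a different (and more delicate) argument than the one you wrote, which fixes $V$ parallelly and arbitrarily.
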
 
\begin{proof}[Proof of Theorem \ref{prop: str count}]
To start with, let us prove that under the running assumptions on $ \Mb^n $ we have that for every $ \varepsilon>0 $ there exists $ r_\varepsilon>0$ such that
     \begin{equation}\label{m:maggiore}
         m(r,\theta) \geq \frac{(\alpha-\varepsilon)(n-1)}{r} \qquad   \forall (r,\theta) \in [ r_\varepsilon  ,+\infty) \times \mathbb{S}^{n-1}  \, .
     \end{equation}
To this end, we exploit Proposition \ref{LC-ch} using as a comparison function $ \psi $ the solution of the ODE problem \eqref{ODEg}, in this case with the following choice of $G$: 
$$
G(r) := 
\begin{cases}
  0  & \text{if } r \in [0,r_\delta] \, ,\\
  \frac{(\alpha-\delta)(\alpha-\delta-1)}{4r_\delta^3}(r-r_\delta) & \text{if } r \in (r_\delta,2r_\delta) \, , \\
  \frac{(\alpha-\delta)(\alpha-\delta-1)}{r^2}  & \text{if } r \ge 2r_\delta \, .
\end{cases}
$$
Because $ \mathrm{Sec}_\omega \le 0 $ everywhere ($\Mb^n$ is by assumption a Cartan-Hadamard manifold), it is readily seen that  \eqref{curvaturasezpiccola} implies that \eqref{sec-above} holds with such a $\psi$, whence \eqref{sec-above-2}. On the other hand, just like \eqref{psi-approx}, we notice that 
$$
\psi(r) = a_1 \, r^{\alpha-\delta} + a_2 \, r^{-\left( \alpha-\delta-1 \right)}  \qquad \forall r \ge 2r_\delta 
$$
for some $ a_1 > 0 $ and $ a_2 \in \R $, which in particular yields
\beq \label{psi-delta}
\lim_{r \to +\infty} \frac{\psi'(r) \, r}{\psi(r)} = \alpha-\delta \, .
\eeq 
By taking, for instance, $ \delta=\varepsilon/2 $, from \eqref{sec-above-2} and \eqref{psi-delta} we easily obtain \eqref{m:maggiore}. In light of this, now we claim that there exists a suitable choice of $A,B>0$ such that the (radial) function 
     \begin{equation*}
         w(x):=\frac{A}{\left[B+r^2(x) \right]^{\frac{1}{q-1}}}
     \end{equation*}
    is a supersolution to \eqref{LE}. Note that $ w $ is, although in a different geometric setting, the same function introduced in the proof of Theorem \ref{soprasol-modelli}. Recalling formula \eqref{L-B}, we have that $w$ is a supersolution provided
      \begin{equation}\label{supsol-m}
     -w''(r) -  m(r,\theta) \, w'(r) \ge w^q(r) \qquad \forall (r,\theta) \in (0,+\infty) \times \mathbb{S}^{n-1} \, .
    \end{equation}
Again, because $ w' \le 0 $ and the above chosen function $\psi$ is increasing, the validity of \eqref{supsol-m} is in turn implied by 
         \begin{equation*}\label{supsol-m-1}
     -w''(r)  \ge w^q(r) \qquad \forall r \in (0,r_\varepsilon) 
    \end{equation*}
    and  (consequence of \eqref{m:maggiore}) 
      \begin{equation*}\label{supsol-m-2}
       -w''(r) - \frac{(\alpha-\varepsilon)(n-1)}{r} \, w'(r) \ge w^q(r) \qquad \forall r \ge r_\varepsilon \, .
 \end{equation*}
From this point on, the proof is exactly the same as that of Theorem \ref{soprasol-modelli}.
\end{proof}


\section{The slightly subcritical regime}\label{sec: slight}

Firstly, we recall the following statement concerning Sobolev inequalities and related embeddings for one-dimensional weighted spaces. This is a crucial ingredient in the proof of Theorem \ref{da capire}. To this end, we let $ L^p\!\left([0,+\infty);\psi^{n-1}\right) $ denote the  Lebesgue space associated with the weighted measure $ \psi^{n-1} \, dr $ over $ [0,+\infty) $, and $ \left\| \cdot \right\|_{p,\psi^{n-1}} $ the corresponding norm. 
\begin{proposition}[Theorems 6.2 and 7.4 in \cite{OK}]\label{KO}
    Let $ \psi $ be a model function and let $ p \in (2,\infty) $. Then the Sobolev-type inequality
    \[
       \left( \int_0^{+\infty} \left| f \right|^p \psi^{n-1} \, dr \right)^{\frac 1 p} \le C_{n,p,\psi}  \left( \int_0^{+\infty} \left| f' \right|^2 \psi^{n-1} \, dr \right)^{\frac 1 2} \qquad \forall f \in C^1_c\!\left([0,+\infty)\right)
       \]
    holds for some positive constant $ C_{n,p,\psi} $ if and only if 
        \begin{equation}\label{sobo-type-kufner}
       \sup_{r \in (0,+\infty)}  \left( \int_0^r \psi^{n-1} \, ds \right)^{\frac 1 p} \left( \int_r^{+\infty} \frac{1}{\psi^{n-1}} \, ds \right)^{\frac 1 2} < + \infty \, .
    \end{equation}
    Moreover, the embedding of $ C^1_c([0,+\infty)) $, endowed with the norm $ \left\| f' \right\|_{2,\psi^{n-1}}  $, into $ L^p\!\left([0,+\infty);\psi^{n-1}\right) $ is compact if and only if 
            \begin{equation}\label{sobo-type-kufner-cpt}
       \lim_{r \to 0}  \left( \int_0^r \psi^{n-1} \, ds \right)^{\frac 1 p} \left( \int_r^{+\infty} \frac{1}{\psi^{n-1}} \, ds \right)^{\frac 1 2} = \lim_{r \to +\infty}  \left( \int_0^r \psi^{n-1} \, ds \right)^{\frac 1 p} \left( \int_r^{+\infty} \frac{1}{\psi^{n-1}} \, ds \right)^{\frac 1 2}=0 \, .
    \end{equation}
\end{proposition}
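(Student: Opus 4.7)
The statement is essentially a pair of weighted one-dimensional Hardy-type characterizations, and the plan is to reduce it to the classical Muckenhoupt--Bradley theory together with its compactness counterpart à la Opic--Kufner. The starting observation is that, for every $ f \in C^1_c([0,+\infty)) $, one can write $ f(r) = -\int_r^{+\infty} f'(s)\,ds $. Setting $ g := f' $ and $ v(r) = w(r) := \psi^{n-1}(r) $, the Sobolev-type inequality becomes the weighted boundedness of the dual Hardy operator
\[
Tg(r) := \int_r^{+\infty} g(s)\,ds \, , \qquad T: L^2\!\left([0,+\infty); w\,dr\right) \to L^p\!\left([0,+\infty); v\,dr\right) .
\]
This reduction is routine but it is the conceptual step that makes the whole characterization transparent, because it separates the differential structure from the embedding property.

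The next step is to invoke the Muckenhoupt--Bradley characterization for $T$, valid in the range $ 2 < p < \infty $: the operator is bounded if and only if
\[
A := \sup_{r>0} \left( \int_0^r v(s)\,ds \right)^{\!\frac{1}{p}} \left( \int_r^{+\infty} \frac{ds}{w(s)} \right)^{\!\frac{1}{2}} < +\infty \, ,
\]
which, with $ v = w = \psi^{n-1} $, is exactly condition \eqref{sobo-type-kufner}. The sufficiency of $ A<+\infty $ is typically proved via a dyadic (or level-set) decomposition $\{ r : 2^k \le Tg(r) < 2^{k+1} \}$ combined with Hölder's inequality, while the necessity part is obtained by testing the inequality against well-chosen truncations of $ g(s) = w(s)^{-1}\chi_{(r,R)}(s) $ and then optimizing in $r$ and $R$.

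For the second equivalence, one would appeal to the Opic--Kufner compactness criterion for Hardy operators between weighted Lebesgue spaces: compactness of $T$ amounts to the vanishing of the Muckenhoupt functional at the two endpoints, that is, to \eqref{sobo-type-kufner-cpt}. The standard scheme is to approximate $T$ by operators $T_{\varepsilon,R}$ obtained by restricting integration to compact intervals $[\varepsilon, R] \subset (0,+\infty)$; each $T_{\varepsilon,R}$ is compact by a direct Arzelà--Ascoli argument on the integrated kernel, and one shows that $\|T - T_{\varepsilon, R}\|_{L^2(w) \to L^p(v)} \to 0$ as $\varepsilon \to 0^+$ and $R \to +\infty$ precisely under the two limit conditions, using again the Muckenhoupt bound on the truncated tails.

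The main technical obstacle lies in establishing the sharp Muckenhoupt--Bradley inequality in the genuinely super-diagonal regime $ p > 2 $, where the constants and the decomposition arguments are more delicate than in the diagonal case $ p = 2 $. Once that is in place, however, the passage to compactness via the uniform-approximation scheme described above is essentially formal. Since both parts are classical and treated in full detail in \cite{OK}, the proof reduces to verifying that the weights in our geometric setting, $ v = w = \psi^{n-1} $, make the Muckenhoupt expressions collapse to the stated conditions \eqref{sobo-type-kufner} and \eqref{sobo-type-kufner-cpt}.
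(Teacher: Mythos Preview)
Your proposal is a correct sketch of the standard Muckenhoupt--Bradley and Opic--Kufner approach, but note that the paper does not actually prove this proposition at all: it is explicitly stated as a recalled result from \cite{OK} (Theorems 6.2 and 7.4), with no argument given. So there is nothing to compare against; your outline is essentially a summary of the proof one would find in the cited reference, and as such it is appropriate and accurate.
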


From the above result, one can immediately deduce the following.
\begin{corollary}\label{KO-cor}
Let $\alpha>1$. Let $ \Mb^n $ be a model manifold associated with a model function $\psi$ satisfying \eqref{hp 00 psi}. Then \eqref{sobo-type-kufner} holds for all $ p \in \left[ 2^*_\alpha , 2^* \right] $ and \eqref{sobo-type-kufner-cpt} holds for all $ p \in \left( 2^*_\alpha , 2^* \right) $. In particular, the embedding $ D^{1,2}_{\mathrm{rad}}(\Mb^n) \hookrightarrow  L^p(\Mb^n) $ is continuous for all $ p \in \left[2^*_\alpha,2^*\right] $ and compact for all $ p \in \left(2^*_\alpha,2^*\right) $. 
\end{corollary}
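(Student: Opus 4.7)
The plan is to reduce the statement to a direct asymptotic verification of the weighted Muckenhoupt-type condition \eqref{sobo-type-kufner} and its vanishing-at-the-endpoints version \eqref{sobo-type-kufner-cpt}, and then transfer the one-dimensional embeddings given by Proposition \ref{KO} to the radial framework on $\Mb^n$. Define
\[
F_p(r) := \left( \int_0^r \psi^{n-1} \, ds \right)^{\frac 1 p} \left( \int_r^{+\infty} \frac{1}{\psi^{n-1}} \, ds \right)^{\frac 1 2} ,
\]
which is continuous on $(0,+\infty)$. The main point is to show that under \eqref{hp 00 psi} the function $F_p$ is bounded on $(0,+\infty)$ for $p\in[2^*_\alpha,2^*]$, and vanishes at $0$ and at $+\infty$ for $p\in(2^*_\alpha,2^*)$. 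Note that $\int_r^{+\infty}\psi^{-(n-1)}\,ds$ is finite for every $r>0$ thanks to the lower bound $\psi(r)\ge\kappa_1 r^\alpha$ at infinity together with $\alpha(n-1)>1$ (since $\alpha>1$ and $n\ge 2$).

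The asymptotic analysis splits into the two endpoints. As $r\to+\infty$, condition \eqref{hp 00 psi} gives
\[
\int_0^r \psi^{n-1} \, ds \asymp r^{\alpha(n-1)+1} , \qquad \int_r^{+\infty} \frac{1}{\psi^{n-1}} \, ds \asymp r^{1-\alpha(n-1)} ,
\]
so $F_p(r)\asymp r^{\frac{\alpha(n-1)+1}{p}+\frac{1-\alpha(n-1)}{2}}$; the exponent is nonpositive iff $p\ge 2^*_\alpha$, and strictly negative iff $p>2^*_\alpha$. As $r\to 0^+$, the normalization $\psi(0)=0$, $\psi'(0)=1$ yields $\psi(r)\sim r$, hence $\int_0^r\psi^{n-1}\,ds\sim r^n/n$; moreover, splitting the tail integral into a piece near $0$ and a piece away from $0$, the latter being uniformly bounded, we obtain for $n\ge 3$ that $\int_r^{+\infty}\psi^{-(n-1)}\,ds\asymp r^{-(n-2)}$, whence $F_p(r)\asymp r^{\frac{n}{p}-\frac{n-2}{2}}$; the exponent is nonnegative iff $p\le 2^*$ and strictly positive iff $p<2^*$. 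In the limiting case $n=2$ one has $2^*=\infty$, the tail integral behaves like $-\log r$, and the resulting logarithmic factor is absorbed by the polynomial vanishing of $(\int_0^r\psi^{n-1}\,ds)^{1/p}$ for every finite $p>2$. Combining these endpoint bounds with the continuity of $F_p$ on $(0,+\infty)$ yields $\sup_{r>0}F_p(r)<+\infty$ for all $p\in[2^*_\alpha,2^*]$, and both limits in \eqref{sobo-type-kufner-cpt} are zero for all $p\in(2^*_\alpha,2^*)$.

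With the one-dimensional inequalities and compactness provided by Proposition \ref{KO}, it only remains to pass to the radial space $D^{1,2}_{\mathrm{rad}}(\Mb^n)$. For any $u=u(r)\in C^1_c(\Mb^n)$ radial, the volume element in polar coordinates gives
\[
\int_{\Mb^n}|u|^p\,dV = \omega_{n-1}\int_0^{+\infty}|u|^p\,\psi^{n-1}\,dr , \qquad \int_{\Mb^n}|\nabla u|^2\,dV = \omega_{n-1}\int_0^{+\infty}|u'|^2\,\psi^{n-1}\,dr ,
\]
so the norm equivalence is exact, and the continuous (resp.\ compact) embedding of $C^1_c([0,+\infty))$ with the weighted Dirichlet norm into $L^p([0,+\infty);\psi^{n-1})$ lifts verbatim to $D^{1,2}_{\mathrm{rad}}(\Mb^n)\hookrightarrow L^p(\Mb^n)$ for $p\in[2^*_\alpha,2^*]$ (resp.\ $p\in(2^*_\alpha,2^*)$). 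The only genuinely delicate step is the double matching of exponents at the two endpoints $r=0$ and $r=+\infty$; however, the computation above shows that the lower endpoint of the $p$-range is dictated by the growth at infinity (and equals $2^*_\alpha$), while the upper endpoint is dictated by the Euclidean-like behavior near the pole (and equals the classical Sobolev exponent $2^*$), so no additional obstruction arises.
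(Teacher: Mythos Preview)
Your proposal is correct and is exactly the direct verification the paper has in mind; the paper itself omits the proof, stating only that the corollary can be ``immediately deduced'' from Proposition~\ref{KO}, and your asymptotic analysis of $F_p$ at $r\to 0^+$ and $r\to+\infty$ is precisely how one carries this out.
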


For $\alpha>1$, we focus at first on the range $q \in \left(2^*_\alpha-1,2^*-1\right)$.
\begin{proof}[Proof of Theorem \ref{da capire}]
Our first goal is to prove that the embedding $D^{1,2}_{\mathrm{rad}}(\Mb^n) \hookrightarrow L^{p}(\Mb^n)$ is compact for all $ p \in \left( 2^*_\alpha , 2^* \right)  $. To this end, we wish to apply Proposition \ref{KO}, and in turn this means that we need to check the validity of \eqref{sobo-type-kufner} and \eqref{sobo-type-kufner-cpt}. In fact, we go over the proof of \cite[Lemma 4.1]{MuRo}, showing that 
\begin{equation}\label{MR-rev}
 \left( \int_0^r \psi^{n-1} \, ds \right)^{\frac 1 p} \left( \int_r^{+\infty} \frac{1}{\psi^{n-1}} \, ds \right)^{\frac 1 2} \le  \left( \int_0^r \sigma^{n-1} \, ds \right)^{\frac 1 p} \left( \int_r^{+\infty} \frac{1}{\sigma^{n-1}} \, ds \right)^{\frac 1 2} \qquad \forall r>0 \, . 
\end{equation}
We provide details for the reader's convenience. First of all, by integrating \eqref{eq-MR} over $ (s,r) $ we obtain
$$
\log\!\left( \frac{\psi(r)}{\psi(s)} \right) \ge \log\!\left( \frac{\sigma(r)}{\sigma(s)} \right) \qquad \forall r>s>0 \, ,
$$
that is 
\begin{equation}\label{ord-2}
\frac{\psi(r)}{\psi(s)} \ge \frac{\sigma(r)}{\sigma(s)} \qquad \forall r>s>0 \, . 
\end{equation}
Since $ \psi(s) \sim \sigma(s) \sim s $ as $s \to 0$, we deduce that
\begin{equation}\label{ord-1}
    \psi(r) \ge \sigma (r) \qquad \forall r>0 \, .
\end{equation}
Taking advantage of \eqref{ord-1}, \eqref{ord-2}, and $ p>2 $, we then have: 
\begin{equation*}\label{ord-3}
\begin{aligned}
  \left( \int_0^r \psi^{n-1}(s) \, ds \right)^{\frac 1 p} &\left( \int_r^{+\infty} \frac{1}{\psi^{n-1}(s)} \, ds \right)^{\frac 1 2} 
 \\= & \left[\psi(r)\right]^{-\frac{(n-1)(p-2)}{2p}} \left( \int_0^r \left[\frac{\psi(s)}{\psi(r)} \right]^{n-1} ds \right)^{\frac 1 p} \left( \int_r^{+\infty} \left[\frac{\psi(r)}{\psi(s)} \right]^{n-1} ds \right)^{\frac 1 2} \\
 \le & \left[\psi(r)\right]^{-\frac{(n-1)(p-2)}{2p}} \left( \int_0^r \left[\frac{\sigma(s)}{\sigma(r)} \right]^{n-1} ds \right)^{\frac 1 p} \left( \int_r^{+\infty} \left[\frac{\sigma(r)}{\sigma(s)} \right]^{n-1} ds \right)^{\frac 1 2} \\
 = & \left[\frac{\sigma(r)}{\psi(r)}\right]^{\frac{(n-1)(p-2)}{2p}} \left( \int_0^r \sigma^{n-1}(s) \, ds \right)^{\frac 1 p} \left( \int_r^{+\infty} \frac{1}{\sigma^{n-1}(s)} \, ds \right)^{\frac 1 2} \\
 \le &  \left( \int_0^r \sigma^{n-1}(s) \, ds \right)^{\frac 1 p} \left( \int_r^{+\infty} \frac{1}{\sigma^{n-1}(s)} \, ds \right)^{\frac 1 2} ,  
\end{aligned}
\end{equation*}
that is, \eqref{MR-rev}. Given the asymptotic assumption \eqref{hp 00 psi} on $\sigma$, we are therefore in a position to appeal to Proposition \ref{KO} and Corollary \ref{KO-cor}, which in particular ensure the claimed compact embedding.
Now, by means of a classical variational argument, it is a routine argument to show that the Sobolev quotient defined in \eqref{SobolevFunct} admits a nonnegative and nontrivial minimizer for all $ q \in \left(2^*_\alpha-1,2^*-1\right)$. Any such a minimizer $u$, suitably normalized, is a nonnegative solution to the Lane-Emden equation \eqref{LE} on $\Mb^n$; moreover, by the strong maximum principle, we have that $u>0$ on $\Mb^n$, thus it is indeed a solution to \eqref{LE}. More details on this rather standard construction will be provided in the proof of Theorem \ref{non-uniq-loc} (in an analogous local setting), where it is necessary to carefully keep of all of the quantities involved.
\end{proof}

Now we focus on Theorem \ref{exi-crit}, which concerns the critical case $q=2^*_\alpha-1$. Notice that the previous argument fails, since the embedding 
$D^{1,2}_{\mathrm{rad}}(\Mb^n) \hookrightarrow L^{2^*_\alpha}(\Mb^n)$ is not compact.

 \begin{proof}[Proof of Theorem \ref{exi-crit}]
    We begin by taking a minimizing sequence $ \{ \varphi_k \} \subset C_c^1([0,+\infty)) $ of the Sobolev quotient $R_\psi= R_{\psi,2^*_\alpha-1}$, that is 
    \[
    I_k:= R_{\psi}(\varphi_k) = \frac{\|\varphi_k'\|_2}{\|\varphi_k\|_{2^*_\alpha}} \to \inf_{f \in D^{1,2}_{\mathrm{rad}}(\Mb^n) \setminus \{0\}} R_{\psi}(f) = I_\psi,
    \]
    where $I_\psi := I_{\psi,2^*_\alpha-1}$. Without loss of generality, the sequence $\varphi_k$ can be chosen to be nonnegative and satisfying
    \beq \label{norm-grad}
    \int_0^{+\infty} \left| \varphi'_k \right|^2 \psi^{n-1} \, dr = 1 \qquad \forall k \in \mathbb{N} \, ,
    \eeq
so that, in particular,
$$
\left\| \varphi_k \right\|_{L^{2^*_\alpha}(\Mb^n)} \le I_\psi^{-1} \qquad \forall k \in \mathbb{N} \, .
$$
By the compactness of local Sobolev embeddings, we can find a subsequence (that we do not relabel) and a nonnegative function $ u \in D^{1,2}_{\mathrm{rad}}(\Mb^n) $ such that 
\begin{equation}\label{loc-emb-1}
    \lim_{k \to \infty} \varphi_k = u \qquad \text{in } L^{2^*_\alpha}_{\mathrm{loc}}(\Mb^n) \, .
\end{equation}
Our goal is to prove that the above convergence is in fact \emph{global}, that is 
\begin{equation}\label{loc-emb-glob}
    \lim_{k \to \infty} \varphi_k = u \qquad \text{in } L^{2^*_\alpha}(\Mb^n) \, ;
\end{equation}
from \eqref{loc-emb-glob}, the thesis will follow. Indeed, by the definition of $ \{ \varphi_k \} $ we have
\begin{equation}\label{I-k}
1=\left\| \varphi_k' \right\|_{L^2(\Mb^n)} = I_k \left\| \varphi_k \right\|_{L^{2^*_\alpha}(\Mb^n)} \qquad \forall k \in \mathbb{N} \, ,
\end{equation}
where $ I_k $ is a positive sequence converging to the optimal Sobolev quotient $ I_\psi$. Passing to the limit as $k \to \infty$ in the above inequality, using \eqref{loc-emb-glob} and lower semicontinuity on the left-hand side, yields 
$$
\left\| u' \right\|_{L^2(\Mb^n)} \le I_\psi \left\| u \right\|_{L^{2^*_\alpha}(\Mb^n)} ,
$$
which shows that $ u $ is actually a Sobolev optimizer. Since $u \ge 0$ and $ u \not \equiv 0 $ (simple consequence of \eqref{norm-grad}), we can infer as in the last part of the proof of Theorem \ref{da capire} that it is a positive radial solution to \eqref{LE} (up to a multiplication by a positive constant).  

Hence, let us focus on proving \eqref{loc-emb-glob}. Thanks to \eqref{loc-emb-1} and the fact that $u \in L^{2^*_\alpha}(\Mb^n)$, it is not difficult to deduce that for every $ \varepsilon>0 $ there exists $ R_\varepsilon>0 $ large enough such that
\begin{equation}\label{limsup-eps}
\limsup_{k \to \infty} \left\| \varphi_k \right\|_{L^{2^*_\alpha}(B_R \setminus B_{R_\varepsilon})} \le \varepsilon \qquad \forall R \ge R_\varepsilon \, .
\end{equation}
Indeed, $ u \in L^{2^*_\alpha}(\Mb^n) $ ensures that there exists $ R_\varepsilon>0 $ such that
\beq \label{outer-mass}
\left\| u \right\|_{L^{2^*_\alpha}\left(B_{R_\varepsilon}^c\right)} < \varepsilon \, .
\eeq
Now, let $R \ge R_\eps$ be arbitrarily chosen; by combining \eqref{loc-emb-1}, \eqref{outer-mass}, and the triangle inequality, we end up with
$$
\limsup_{k \to \infty} \left\| \varphi_k \right\|_{L^{2^*_\alpha}(B_R \setminus B_{R_\varepsilon})} \le \limsup_{k \to \infty} \left\| \varphi_k - u \right\|_{L^{2^*_\alpha}(B_R \setminus B_{R_\varepsilon})} + \left\| u \right\|_{L^{2^*_\alpha}(B_R \setminus B_{R_\varepsilon})}  \le \varepsilon \, ,
$$
that is, \eqref{limsup-eps}.

Now we argue by contradiction, using a concentration-compactness argument; specifically, we will rule out vanishing and mass splitting. If \eqref{loc-emb-glob} fails, then there exists $ \lambda >0 $ such that (up to subsequences that again we do not relabel)
\begin{equation}\label{vanish}
 \liminf_{k \to \infty} \int_{R}^{+\infty} \varphi_k^{2^*_\alpha} \, \psi^{n-1} \, dr  \ge  \lambda \qquad \forall R>0 \, .
\end{equation}
Without loss of generality, we may assume that $ R_\varepsilon $ in \eqref{limsup-eps} is so large that
\begin{equation}\label{kappa-eps}
\left( \kappa - \varepsilon \right) r^\alpha \le \psi(r) \le \left( \kappa + \varepsilon \right) r^\alpha \qquad \forall r \ge R_\varepsilon \, ,
\end{equation}
and at the same time that $ k $ is sufficiently large that
\begin{equation}\label{k-large}
  \left\| \varphi_k \right\|_{L^{2^*_\alpha}(B_{2 R_\varepsilon} \setminus B_{R_\varepsilon})} \le 2\varepsilon \, .
\end{equation}
Let $ \{ \phi_R \} $ be the same family of cutoff functions as in \eqref{v-cutoff}, with $ m=1 $, and consider the following splitting of $\varphi_k$:
\beq\label{def-split}
\varphi_k = u_k + v_k := \phi_{R_\varepsilon} \, \varphi_k + (1-\phi_{R_\varepsilon}) \, \varphi_k \, . 
\eeq
We have:
\begin{equation}\label{young}
\begin{aligned}
\left(\varphi'_k\right)^2 = \left(u'_k\right)^2 + \left(v'_k\right)^2 + 2 u_k' v_k' = & \left(u'_k\right)^2 + \left(v'_k\right)^2 + 2 \left[  \phi'_{R_\varepsilon}\varphi_k + \phi_{R_\varepsilon}\varphi_k' \right] \left[  -\phi'_{R_\varepsilon}\varphi_k + (1-\phi_{R_\varepsilon})\varphi_k' \right] \\
\ge & \left(u'_k\right)^2 + \left(v'_k\right)^2 -2\left(\phi'_{R_\varepsilon}\right)^2 \varphi_k^2-  2\left|\phi'_{R_\varepsilon} \varphi_k \varphi_k'\right| \\
\ge & \left(u'_k\right)^2 + \left(v'_k\right)^2 - \left(2 + \tfrac 2 \delta \right) \left(\phi'_{R_\varepsilon}\right)^2 \varphi_k^2 - 2\delta \left(\varphi'_k\right)^2 ,
\end{aligned}
\end{equation}
where we have introduced a further free parameter $ \delta>0 $, to be chosen, via Young's inequality. Integrating \eqref{young} over $ \Mb^n $, recalling \eqref{norm-grad} and the usual decay and support properties of $ \phi_{R_\varepsilon} $, we obtain:
$$
1=\int_0^{+\infty} \left( \varphi'_k \right)^2 \psi^{n-1} \, dr \ge  \int_0^{+\infty} \left( u'_k \right)^2 \psi^{n-1} \, dr +  \int_0^{+\infty} \left( v'_k \right)^2 \psi^{n-1} \, dr - \frac{C}{R_\varepsilon^2} \cdot \frac{1+\delta}{\delta} \, \int_{R_\varepsilon}^{2R_\varepsilon} \varphi_k^2 \, \psi^{n-1} \, dr -2\delta \, ,
$$
for some $C>0$ independent of $ k,\varepsilon,\delta $ (whose specific value may change from line to line in the sequel). By H\"older's inequality, we get:
$$
\int_{R_\varepsilon}^{2R_\varepsilon} \varphi_k^2 \, \psi^{n-1} \, dr \le \left[V(B_{2R_\varepsilon})\right]^{1-\frac{2}{2^*_\alpha}} \left(  \int_{R_\varepsilon}^{2R_\varepsilon} \varphi_k^{2^*_\alpha} \, \psi^{n-1} \, dr \right)^{\frac{2}{2^*_\alpha}} ;
$$
recalling the asymptotic properties of $\psi$ and the definition of $ 2^*_\alpha $, we find that the first factor on the right-hand side of behaves exactly like $ R_\varepsilon^2 $, therefore, in view of \eqref{k-large} we may write 
$$
1 \ge  \int_0^{+\infty} \left( u'_k \right)^2 \psi^{n-1} \, dr +  \int_0^{+\infty} \left( v'_k \right)^2 \psi^{n-1} \, dr - C \, \frac{1+\delta}{\delta} \, \varepsilon^2  -2\delta \, ,
$$
Choosing $ \delta=\varepsilon $ gives
\begin{equation*}\label{key-ineq-1}
1 \ge  \int_0^{+\infty} \left( u'_k \right)^2 \psi^{n-1} \, dr +  \int_0^{+\infty} \left( v'_k \right)^2 \psi^{n-1} \, dr - C \varepsilon \, .
\end{equation*}
Using the definitions of $I_\psi$ and $I_\alpha$, also observing that $ v_k $ is supported where \eqref{kappa-eps} holds, we may deduce that 
\begin{equation}\label{key-ineq-2}
1 \ge  I_\psi^2 \left\| u_k \right\|_{L^{2^*_\alpha}(\Mb^n)}^2 +  I_\alpha^2 \left( \frac{\kappa-\varepsilon}{\kappa} \right)^{n-1} \left( \frac{\kappa}{\kappa+\varepsilon} \right)^{\frac{2(n-1)}{2^*_\alpha}} \left\| v_k \right\|_{L^{2^*_\alpha}(\Mb^n)}^2 - C \varepsilon \, .
\end{equation}
Next, by \eqref{def-split}, it follows that 
$$
 u_k^{2^*_\alpha} + v_k^{2^*_\alpha} \le \varphi_k^{2^*_\alpha} \le u_k^{2^*_\alpha} + v_k^{2^*_\alpha} + \chi_{B_{2R_\varepsilon}\setminus B_{R_\varepsilon}} \, \varphi_k^{2^*_\alpha} \, ,
$$
which, upon integration, becomes (using \eqref{I-k} and \eqref{k-large})
$$
I_k^{-2^*_\alpha} = \left\| \varphi_k \right\|_{L^{2^*_\alpha}(\Mb^n)}^{2^*_\alpha} \le \left\| u_k \right\|_{L^{2^*_\alpha}(\Mb^n)}^{2^*_\alpha} + \left\| v_k \right\|_{L^{2^*_\alpha}(\Mb^n)}^{2^*_\alpha} + (2\varepsilon)^{2^*_\alpha}  \, .
$$
Substituting such an inequality into \eqref{key-ineq-2} yields 
\begin{equation}\label{key-ineq-3}
1 \ge  I_\psi^2 \left[I_k^{-2^*_\alpha} - \left\| v_k \right\|_{L^{2^*_\alpha}(\Mb^n)}^{2^*_\alpha} - (2\varepsilon)^{2^*_\alpha} \right]^{\frac{2}{2^*_\alpha}} +  I_\alpha^2 \left( \frac{\kappa-\varepsilon}{\kappa} \right)^{n-1} \left( \frac{\kappa}{\kappa+\varepsilon} \right)^{\frac{2(n-1)}{2^*_\alpha}} \left\| v_k \right\|_{L^{2^*_\alpha}(\Mb^n)}^2 - C \varepsilon \, .
\end{equation}
Thanks to \eqref{vanish}, we infer that (up to subsequences)
$$
\lambda \le \lambda_\varepsilon := \lim_{k \to \infty} \left\| v_k \right\|_{L^{2^*_\alpha}(\Mb^n)}^{2^*_\alpha} ,
$$
so that taking $k \to \infty$ in \eqref{key-ineq-3} gives 
\begin{equation}\label{key-ineq-4}
1 \ge  I_\psi^2 \left[I_\psi^{-2^*_\alpha} - \lambda_\varepsilon - (2\varepsilon)^{2^*_\alpha} \right]^{\frac{2}{2^*_\alpha}} +  I_\alpha^2 \left( \frac{\kappa-\varepsilon}{\kappa} \right)^{n-1} \left( \frac{\kappa}{\kappa+\varepsilon} \right)^{\frac{2(n-1)}{2^*_\alpha}} \lambda_\varepsilon^{\frac{2}{2^*_\alpha}} - C \varepsilon \, .
\end{equation}
Now we may pass to the limit in \eqref{key-ineq-4} as $\varepsilon \to 0$ (along a sequence), noticing that $\{\lambda_\varepsilon\}$ converges to some positive limit, which we keep denoting by $\lambda$, such that $ \lambda \le I^{-2^*_\alpha}_\psi $; this yields, in combination with the assumed strict inequality $ I_\alpha > I_\psi $, 
\begin{equation*}\label{key-ineq-5}
1 >  I_\psi^2 \left[I_\psi^{-2^*_\alpha} - \lambda\right]^{\frac{2}{2^*_\alpha}} +  I_\psi^2 \,\lambda^{\frac{2}{2^*_\alpha}} \, .
\end{equation*}
However, the above inequality is inconsistent with the strict subadditivity of the function $ x \mapsto x^{\frac{2}{2^*_\alpha}} $. A contradiction having been reached, \eqref{loc-emb-glob} holds and therefore the proof is complete.     
\end{proof}

\subsection{Examples of model functions satisfying the assumptions of Theorems \ref{da capire} and \ref{exi-crit}}\label{sub: examples exi-crit}

\begin{example}
    Let us pick $ \psi $ as any model function satisfying
    $$
\frac{\psi'(r)}{\psi(r)} \ge \frac{\alpha}{r}  + G'(r) \qquad \text{for all $r$ large enough} \, ,
    $$
    for some smooth and globally bounded function $G(r)$. Then, it is not difficult to check that one can construct another model function $ \sigma $ such that 
    $$
\frac{\psi'(r)}{\psi(r)} \ge \frac{\sigma'(r)}{\sigma(r)} \quad \forall r>0 \qquad \text{and} \qquad \frac{\sigma'(r)}{\sigma(r)}  =\frac{\alpha}{r} + G'(r) \quad \text{for all $r$ large enough} \, ,
    $$
    so that all of the assumptions of Theorem \ref{da capire} are fulfilled. For instance, a function $\psi$ of the type
    $$
    \psi(r) = r^\alpha \, \log r \, e^{\sin r}  \qquad \text{for all $r$ large enough}
    $$
    will do, even if its asymptotics as $r \to +\infty$ is oscillatory and faster than $ r^\alpha $.
\end{example}

\begin{example}
Given $R>0$, let us consider any model function $ \psi \equiv \psi_R $ having the following properties:
$$
\psi_R(r) = r \quad \forall r \in [0,R] \, , \qquad \psi_R(r)=r^\alpha \quad \forall r > 2R \, .
$$ 
Note that, given the convexity of both $ r \mapsto r $ and $r \mapsto r^\alpha$, one can always construct $\psi_R$ in such a way that it is globally convex, hence $\mathbb{M}^n$ is Cartan-Hadamard. On one side, we have that $I_\alpha>0$, thanks to \cite[Theorem 6.2]{OK}. Furthermore, since $ 2^*_\alpha < 2^* $, it is well known that $D^{1,2}_{\mathrm{rad}}(\R^n)$ does not embed into $L^{2^*_\alpha}(\R^n)$, which is equivalent to 
\[
\lim_{R \to +\infty} \ \inf_{\varphi \in C^1_c([0,R)) \setminus \{ 0 \} } \frac{\left(\int_0^{R} \left| \varphi'(r) \right|^2 r^{n-1} \, dr \right)^{\frac 12}}{\left(\int_0^{R} |\varphi(r)|^{2^*_\alpha} \, r^{n-1} \, dr \right)^{\frac{1}{2^*_\alpha}}} = 0 \, .
\]
Thus, recalling the definitions of $I_\psi$, we can choose $R$ so large that $I_\psi < I_\alpha$, and hence all of the assumptions of Theorem \ref{exi-crit} are met.
\end{example}

\section{The intermediate regime}
\label{sec: int}
We prove Theorem \ref{thm: non-ex int} first. Afterwards, we provide examples of functions $\psi$ satisfying \eqref{hp 4 psi}. Finally, we prove Theorems \ref{thm: ex int} and \ref{non-uniq-loc}.

\subsection{Non-existence of solutions under assumption \eqref{hp 4 psi}}\label{nonex}
Throughout this subsection, we take for granted that $\Mb^n$ is a (noncompact) model manifold associated with a model function $\psi$. We aim at showing that, in such a setting, the Lane-Emden equation \eqref{LE} has no positive radial solutions for any $q \in \left(\tilde 2_\alpha, 2^*_\alpha-1\right]$ provided $\psi$ complies with \eqref{hp 4 psi}. Upon writing the equation in polar coordinates (recall \eqref{lap-rad}), this amounts to showing that for any $a>0$ the \emph{local solution} to the Cauchy problem 
\beq\label{pb Cauchy}
\begin{cases}
\displaystyle -\frac{1}{\psi^{n-1}(r)} \left(\psi^{n-1}(r) \, u'(r)\right)' = \left|u(r)\right|^{q-1}  u(r) & \text{for $r >0$} \, ,\\
 u(0) = a>0 \, , \\
 u'(0) = 0 \, ,
\end{cases}
\eeq
is either not globally defined or sign changing. Note that the existence of a unique local solution follows by standard ODE theory (recall that $\psi(r) \sim r$ as $r\to 0$), so that we can introduce the maximal existence interval $I \equiv [0,R) $ of $u$ (with possibly $R=+\infty$ when $u$ is global).
Now, let us define the \emph{energy function}
\[
F_u(r):= \frac12 \left|u'(r)\right|^2 + \frac{1}{q+1} \left|u(r)\right|^{q+1},
\]
along with the \emph{Pohozaev function}
\begin{equation}\label{poho}
  P_u(r) := \left( \int_0^r \psi^{n-1}\,ds\right) F_u(r) + \frac{\psi^{n-1}(r)}{q+1} \, u(r) \,  u'(r) \, ,
\end{equation} 
for all $ r \in I $. We recall the following basic facts from \cite{MuSo} (see Subsection 3.1 therein):
\begin{itemize}
\item[($i$)] If $u>0$ in $[0,R)$, then $u'<0$ in $(0,R]$; this plainly follows by integrating the differential equation in \eqref{pb Cauchy}.
\item[($ii$)] It holds that
\beq\label{der P u}
P_u'(r) = \psi^{n-1}(r) \left(\frac12+\frac1{q+1}-(n-1)\,\frac{\psi'(r)}{\psi^{n}(r)} \int_0^r \psi^{n-1} \,ds\right) \left|u'(r)\right|^2 .
\eeq
\end{itemize}

From now on, we argue by contradiction and suppose that a global positive solution $u$ to \eqref{pb Cauchy} exists. Note that, under assumption \eqref{hp 4 psi}, the Pohozaev function $P_u$ is nondecreasing by \eqref{der P u}. Moreover, we also claim that it cannot be constant.

\begin{lemma}\label{lem: P_u cresce}
Let $ q \in ( 1 , 2^*-1 ) $. If $u$ is a global positive solution to \eqref{pb Cauchy}, then $P_u$ is non-constant,  hence there exists $\ell \in (0,+\infty]$ such that $P_u(r) \to \ell$ as $r \to +\infty$.
\end{lemma}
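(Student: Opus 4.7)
My plan is to combine the (already noted) monotonicity of $P_u$ with a contradiction argument showing that $P_u$ cannot be identically zero. The task really has two pieces: existence of the limit $\ell$, which I will read off from monotonicity plus boundary behaviour, and strict positivity of $\ell$, which will come from the non-constancy.

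I would start by observing that $P_u(0)=0$. Indeed, in \eqref{poho} the first summand vanishes because $\int_0^0 \psi^{n-1}\,ds =0$, and the second because $\psi^{n-1}(0)=0$ by \eqref{hp 1 psi}, while $F_u(0)$ and $u(0)\,u'(0)=0$ are finite. Since $P_u$ is nondecreasing (by \eqref{der P u} together with \eqref{hp 4 psi}), it follows that $P_u \ge 0$ on $[0,+\infty)$ and the monotone limit $\ell := \lim_{r \to +\infty} P_u(r) \in [0,+\infty]$ exists. Hence the only nontrivial point is to upgrade $\ell$ from $\ge 0$ to $>0$, which is equivalent to showing that $P_u$ is not identically zero.

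To rule out $P_u \equiv 0$, I would argue by contradiction. If this were the case, then $P_u'(r)=0$ for every $r>0$, and by \eqref{der P u}, using $\psi^{n-1}(r)>0$ on $(0,+\infty)$, one would have
\[
\left(\tfrac12+\tfrac{1}{q+1}-(n-1)\,\tfrac{\psi'(r)}{\psi^{n}(r)}\int_0^r \psi^{n-1}\,ds\right)|u'(r)|^2=0 \qquad \forall r>0.
\]
By Remark \ref{rem: on hp 4}, the bracketed factor tends to $\tfrac12+\tfrac{1}{q+1}-\tfrac{n-1}{n}$ as $r \to 0^+$, and this quantity is strictly positive precisely because $q+1<2^*$. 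Therefore the factor is strictly positive on some interval $(0,\delta)$, which forces $u'\equiv 0$ on $(0,\delta)$. But according to property ($i$) recalled before the lemma, a global positive solution satisfies $u'<0$ on all of $(0,+\infty)$, a contradiction. Hence $P_u$ is non-constant, and consequently $\ell>0$.

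The main (really the only) delicate point is upgrading the non-negativity of $P_u'$ to strict positivity on some interval: this is where Remark \ref{rem: on hp 4} is crucial, since it supplies \emph{strict} inequality in \eqref{hp 4 psi} near the origin regardless of the specific model function, and it is combined with the strict negativity of $u'$ away from $r=0$ coming from ($i$). Everything else is bookkeeping.
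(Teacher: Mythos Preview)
Your proof is correct and follows essentially the same approach as the paper: both rely on the fact that the bracketed factor in \eqref{der P u} is strictly positive near $r=0$ (Remark~\ref{rem: on hp 4}), combined with $u'<0$ on $(0,+\infty)$ (property~($i$)), to rule out $P_u'\equiv 0$. Your write-up is slightly more explicit in verifying $P_u(0)=0$ and deducing the existence of the limit $\ell$, whereas the paper leaves those points to the surrounding discussion, but the core argument is the same.
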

\begin{proof}
Since $  |u'(r)|>0 $ for all $ r>0 $, from \eqref{der P u} it is plain than $ P_u $ is constant if and only if  
$$
\frac12+\frac1{q+1}-(n-1)\,\frac{\psi'(r)}{\psi^{n}(r)} \int_0^r \psi^{n-1} \,ds = 0 \qquad \forall r>0 \, .
$$
However, we have already observed in Remark \ref{rem: on hp 4} that this quantity is strictly positive for $r$ small.
\end{proof}

Now the idea is to obtain some crucial decay estimates for global positive solutions. The method of proof is similar to that of \cite[Theorem 1.4]{MuSo}, although no Cartan-Hadamard assumption is needed here.


\begin{lemma}\label{lemma3.2}
Let $q >\tilde 2_\alpha $ and assumption \eqref{hp 00 psi} hold. If $u$ is a global positive solution to \eqref{pb Cauchy}, then 
there exists $C>0$ such that 
\begin{equation}\label{lestime}
\begin{split}
u(r) \leq C \left(r +1 \right)^{-\frac{2}{q-1}} \quad \text{and} \quad  \left| u'(r) \right| \leq C \left(r +1\right)^{-\frac{2}{q-1}-1} \qquad \forall r \ge 0 \, .
\end{split}
\end{equation}
\end{lemma}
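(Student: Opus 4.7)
\smallskip
\noindent The plan is to establish the decay in three stages: show $u(r), u'(r) \to 0$ as $r \to +\infty$; produce an initial polynomial decay rate via an energy/Pohozaev argument; and bootstrap using the integrated form of the equation to reach the sharp rate. Throughout, let $V(r) := \int_0^r \psi^{n-1}(s)\,ds$.

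\emph{Stage 1.} Since $u > 0$ is global, observation (i) above gives $u'(r) < 0$ on $(0, +\infty)$, so $u$ decreases monotonically to some $L \ge 0$. Integrating the divergence form of the equation in \eqref{pb Cauchy} yields
\[
\psi^{n-1}(r)\, u'(r) = -\int_0^r \psi^{n-1}(s)\, u^q(s)\, ds. \qquad (\star)
\]
If $L > 0$, the right-hand side is at most $-(L/2)^q V(r)$ for large $r$, and \eqref{hp 00 psi}, which implies $V(r)/\psi^{n-1}(r) \sim c\, r$ as $r \to +\infty$, gives $-u'(r) \ge c\, r$, forcing $u \to -\infty$, a contradiction. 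Hence $L = 0$, and an analogous argument via $(\star)$ shows $u'(r) \to 0$.

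\emph{Stages 2--3.} Suppose inductively that $u(s) \le C\, s^{-\beta}$ for $s \ge r_0$ with some $\beta > 2/q$. Substituting into $(\star)$ and using \eqref{hp 00 psi} produces $|u'(r)| \le C\, r^{1 - q\beta}$ in the regime $q\beta \le \alpha(n-1)+1$, and $|u'(r)| \le C\, r^{-\alpha(n-1)}$ otherwise. Integrating $|u'|$ backwards from $+\infty$ (possible because $u \to 0$) improves the decay of $u$ to $C\, r^{-(q\beta - 2)}$ in the first case, respectively $C\, r^{-(\alpha(n-1) - 1)}$ in the second. The iteration $\beta \mapsto q\beta - 2$ has $2/(q-1)$ as its fixed point; since $q > \tilde{2}_\alpha$ is equivalent to $\alpha(n-1) - 1 > 2/(q-1)$, once $q\beta$ exceeds $\alpha(n-1)+1$ the bootstrap lands on the stable rate $\alpha(n-1) - 1$, which implies the claimed bound $u(r) \le C(r+1)^{-2/(q-1)}$. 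The estimate on $|u'(r)|$ then follows by inserting this decay back into $(\star)$ and computing the resulting integral.

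\textbf{Main obstacle.} The delicate point is producing an initial polynomial bound $u(r) \le C\, r^{-\beta_0}$ with $\beta_0 > 2/(q-1)$ so as to start the bootstrap: the iteration map $\beta \mapsto q\beta - 2$ has $2/(q-1)$ as a \emph{repelling} fixed point, so naive iteration from the trivial bound $u \le a$ cannot cross this threshold. Following the strategy of \cite{MuSo}, the initial decay will be extracted through a careful analysis of the energy $F_u$ and of the Pohozaev function $P_u$ defined in \eqref{poho}--\eqref{der P u}, exploiting the asymptotic structure of $\psi$ furnished by \eqref{hp 00 psi} and the condition $q > \tilde 2_\alpha$ to control the growth of $V(r) F_u(r)$ and deduce a first polynomial decay rate strictly above $2/(q-1)$.
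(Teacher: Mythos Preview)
Your proposed route has a genuine gap, and the paper's argument is both simpler and avoids it entirely.

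\textbf{The gap.} You correctly identify that the map $\beta \mapsto q\beta - 2$ has $2/(q-1)$ as a repelling fixed point, so the bootstrap needs an initial exponent $\beta_0 > 2/(q-1)$. You then propose to obtain such a $\beta_0$ from a Pohozaev-type analysis. But no such $\beta_0$ exists in general: the rate $r^{-2/(q-1)}$ is \emph{sharp} and is actually attained by solutions. Indeed, for $q = 2^*_\alpha - 1$ the paper later shows (Lemmas \ref{lemma2.4garcia}--\ref{lemma2.5garcia}) that $r^{2/(q-1)} u(r) \to L \in (0,+\infty)$, so $u$ decays exactly like $r^{-2/(q-1)}$ and no faster. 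Consequently your claimed bootstrap conclusion, that the iteration ``lands on the stable rate $\alpha(n-1)-1$'', is strictly stronger than the lemma and is \emph{false}. Any argument pretending to extract an initial $\beta_0 > 2/(q-1)$ must fail.

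\textbf{The paper's one-step argument.} The paper bypasses the bootstrap entirely. In your identity $(\star)$, use the monotonicity $u(s) \ge u(r)$ for $s \le r$ to pull $u^q(r)$ out of the integral:
\[
-u'(r) \ge u^q(r)\,\frac{V(r)}{\psi^{n-1}(r)} \qquad \Longleftrightarrow \qquad \frac{1}{q-1}\big[u^{1-q}(r)\big]' \ge \frac{V(r)}{\psi^{n-1}(r)}\,.
\]
Under \eqref{hp 00 psi} the right-hand side is bounded below by $cr$ for large $r$; integrating once gives $u^{1-q}(r) \ge c\,r^2$, i.e.\ $u(r) \le C\,r^{-2/(q-1)}$, directly at the sharp rate. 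The bound on $|u'|$ then follows by substituting this decay back into $(\star)$ exactly as you describe, using $q > \tilde 2_\alpha$ to ensure the integrand $s^{-2q/(q-1)+\alpha(n-1)}$ has exponent $> -1$. No Pohozaev input is required for this lemma.
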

\begin{proof}
Integrating the equation in \eqref{pb Cauchy} over $(0,r)$, and taking into account that $u'(0)=0 = \psi(0)$, we have
\begin{equation}\label{id-1}
-u'(r) = \frac{1}{\psi^{n-1}(r)} \, \int_0^r u^q \, \psi^{n-1} \, ds \qquad \forall r>0 \, ;
\end{equation}
on the other hand, since $ u $ is decreasing, from \eqref{id-1} it follows that
\begin{equation*}\label{id-2}
-u'(r) \ge \frac{u^q(r)}{\psi^{n-1}(r)} \, \int_0^r\psi^{n-1} \, ds \qquad \forall r>0 \, ,
\end{equation*}
which can be written as
\begin{equation}\label{id-3}
\frac{1}{q-1} \left[ u^{1-q}(r) \right]' \ge \frac{1}{\psi^{n-1}(r)} \, \int_0^r\psi^{n-1} \, ds \qquad \forall r>0 \, .
\end{equation}
Integrating in turn \eqref{id-3} over $(0,r)$, and using \eqref{hp 00 psi} on the right-hand side, it is straightforward to obtain the left estimate in \eqref{lestime}. In order to prove the right estimate as well, it is convenient to go back to \eqref{id-1} and exploit the just proven bound on $u$, which yields
\begin{equation}\label{id-1-bis}
-u'(r) \le \frac{C^q}{\psi^{n-1}(r)} \, \int_0^r \left(s+1 \right)^{-\frac{2q}{q-1}} \psi^{n-1}(s) \, ds \le \frac{C}{(r+1)^{\alpha(n-1)}} \, \int_0^r \left(s+1 \right)^{-\frac{2q}{q-1}+\alpha(n-1)} ds \qquad \forall r>1 \, ,
\end{equation}
where in the last inequality we have taken advantage of \eqref{hp 00 psi}, and $C>0$ is a generic constant as in the statement. Now, we notice that
$$
-\frac{2q}{q-1}+\alpha(n-1) > -1 \qquad \iff \qquad q>\tilde 2_\alpha \, ,
$$
so that the claimed estimate for $|u'|$ follows by a direct integration of the rightmost term in \eqref{id-1-bis}.
\end{proof}

At this point, it is not difficult to complete the proof of Theorem \ref{thm: non-ex int} when $q<2^*_\alpha-1$. To this end, it is convenient to explicitly recall that, by virtue of \eqref{hp 00 psi},
\beq\label{stima int}
\frac{\kappa_1^{n-1} }{\alpha(n-1)+1} \, r^{\alpha(n-1)+1} \le \int_0^r \psi^{n-1}\, ds \le \frac{\kappa_2^{n-1} }{\alpha(n-1)+1} \, r^{\alpha(n-1)+1} \qquad \text{for all $r$ large enough} \, .
\eeq

\begin{proof}[Proof of Theorem \ref{thm: non-ex int} for $q<2^*_\alpha-1$]
Assume by contradiction that a positive radial solution $u$ to \eqref{LE} exists. Under \eqref{hp 4 psi}, we have already observed that the Pohozaev function $P_u$ defined in \eqref{poho} is nondecreasing; moreover, thanks to  Lemma \ref{lem: P_u cresce}, it is non-constant. Thus, on the one hand we have  $0=P_u(0) <P_u(+\infty) = \ell$. 
On the other hand, the decay estimates of Lemma \ref{lemma3.2} and \eqref{stima int} imply that
\[
\begin{split}
\left(\int_0^ r \psi^{n-1}\,ds\right) \left|u'(r)\right|^2 &\le C \, r^{\alpha(n-1)+1-\frac{4}{q-1}-2} \, , \\
\left(\int_0^ r \psi^{n-1}\,ds\right) u^{q+1}(r) & \le C \, r^{\alpha(n-1)+1-\frac{2(q+1)}{q-1}} \, ,\\
\psi^{n-1}(r) \, u(r) \left|u'(r)\right| & \le C \,  r^{\alpha(n-1)-\frac{4}{q-1}-1} \, ,
\end{split}
\] 
for all $r>0$ large enough. The three exponents on the right-hand sides are all equal and negative for $q<2^*_\alpha-1$. Therefore, taking limits as $r \to +\infty$ we obtain  $ \ell=0$, which is the desired contradiction.
\end{proof}

The critical case $q=2^*_\alpha-1$ is much more delicate, and we will need several preliminary lemmas to handle it. Note that the previous argument fails, since the estimates obtained in Lemma \ref{lemma3.2} only allow us to show that the Pohozaev function $P_u$ is \emph{bounded} (in fact each of the three summands forming $ P_u $ is bounded), which is \emph{a priori} compatible with the fact that $0=P_u(0) <P_u(+\infty) = \ell$. 

\begin{lemma}\label{lemma2.4garcia}
Let $q=2^*_\alpha-1$ and assumptions \eqref{hp 3 psi} and \eqref{hp 4 psi} hold. Suppose that $u$ is a global positive solution to \eqref{pb Cauchy} such that the following limit exists:
 \begin{equation}\label{L}
    L:=\lim_{r\to + \infty} r^{\frac{\alpha(n-1)-1}{2}} \, u(r) \in [0,+\infty) \, .
\end{equation}
Then 
 \begin{equation}\label{limiteu'}
      \lim_{r\to +\infty}r^{\frac{\alpha(n-1)+1}{2}} \left|u'(r) \right|= \left(\frac{\alpha(n-1)-1}{2}\right)L \, .
 \end{equation}
\end{lemma}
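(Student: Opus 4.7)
The plan is to exploit an Emden-Fowler-type change of variables. Setting $t = \log r$ and $v(t) := r^{2/(q-1)} u(r)$, the hypothesis reads $v(t) \to L$ as $t \to +\infty$, because $\frac{\alpha(n-1)-1}{2} = \frac{2}{q-1}$ when $q = 2^*_\alpha - 1$. Differentiating the definition of $v$ gives the elementary identity
\begin{equation*}
\dot v(t) = \frac{2}{q-1}\, v(t) + r^{\frac{\alpha(n-1)+1}{2}}\, u'(r);
\end{equation*}
since $u' < 0$, the target conclusion \eqref{limiteu'} is then equivalent to the single claim $\dot v(t) \to 0$ as $t \to +\infty$.

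To achieve this, I would derive the ODE satisfied by $v$ from \eqref{pb Cauchy}. A direct calculation that rests on the asymptotic $r\psi'(r)/\psi(r) \to \alpha$ from \eqref{psi'psi}, together with the arithmetic identity $\alpha(n-1)(q-1) = q+3$ (equivalent to $q = 2^*_\alpha - 1$), yields an equation of the form
\begin{equation*}
\ddot v(t) + A(t)\, \dot v(t) + B(t)\, v(t) + v^q(t) = 0,
\end{equation*}
with $A(t) \to 0$ and $B(t) \to -4/(q-1)^2$ as $t \to +\infty$. In particular, the equation is asymptotically autonomous, with limit $\ddot v - (4/(q-1)^2)\,v + v^q = 0$, a conservative ODE whose finite rest points are $v = 0$ and $v = L_\ast := (4/(q-1)^2)^{1/(q-1)}$.

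Showing $\dot v \to 0$ then proceeds in three short steps. First, Lemma \ref{lemma3.2} (which applies since \eqref{hp 3 psi} implies \eqref{hp 00 psi}) gives $|u'(r)| \le C(r+1)^{-2/(q-1)-1}$, so $r^{(\alpha(n-1)+1)/2}|u'(r)|$ is bounded; in combination with the convergence of $v$, the identity in the opening paragraph implies $\dot v$ is uniformly bounded. Second, passing to the limit in the ODE, using $A(t) \to 0$ and the boundedness of $\dot v$, yields $\ddot v(t) \to (4/(q-1)^2)\,L - L^q$; a nonzero limit of $\ddot v$ would force $\dot v$ to diverge linearly, contradicting its uniform bound, so this limit must vanish (which, as a by-product, forces $L \in \{0, L_\ast\}$). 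Third, if $\dot v$ did not tend to $0$, one could find $\delta > 0$ and $t_n \to +\infty$ with, say, $\dot v(t_n) \ge \delta$; the smallness of $\ddot v$ at infinity then gives $\dot v \ge \delta/2$ on $[t_n, t_n+1]$ for $n$ large, whence $v(t_n+1) - v(t_n) \ge \delta/2$, contradicting $v \to L$. The conclusion \eqref{limiteu'} follows at once from the identity in the first paragraph.

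The main obstacle is the tedious but crucial verification, inside the ODE derivation, that the coefficient $A(t)$ of $\dot v$ tends to zero; this hinges on the special arithmetic of the critical exponent $q = 2^*_\alpha - 1$ (namely $\alpha(n-1)(q-1) = q+3$), which is also what makes the limit ODE conservative and reflects the same structural cancellation underlying the Pohozaev function \eqref{poho} at criticality.
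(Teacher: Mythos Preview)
Your argument is correct and complete: the Emden--Fowler substitution $v(t)=r^{2/(q-1)}u(r)$, $t=\log r$, produces (after the computation you outline) the equation $\ddot v + A(t)\dot v + B(t)v + v^q=0$ with $A(t)=(n-1)\tfrac{r\psi'}{\psi}-(2\rho+1)\to 0$ and $B(t)\to -\rho^2$, where $\rho=2/(q-1)=\tfrac{\alpha(n-1)-1}{2}$; the three steps (boundedness of $\dot v$ via Lemma~\ref{lemma3.2}, $\ddot v\to 0$ by passing to the limit, and $\dot v\to 0$ by integrating $\ddot v$ over unit intervals) are all sound.

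This is, however, a genuinely different route from the paper's. The paper never introduces the new variable $v$; instead it observes that, under \eqref{hp 4 psi}, the Pohozaev function $P_u$ is monotone, so $P_u(r)\to\ell$, and combines this with the assumed limit \eqref{L} to conclude that a certain quadratic expression in the unknown $\lambda=\lim r^{(\alpha(n-1)+1)/2}|u'|$ has a limit~$\mu$; any subsequential limit of $r^{(\alpha(n-1)+1)/2}|u'|$ then solves a fixed quadratic equation, and a continuity argument rules out oscillation between two roots, whence the limit exists, and l'H\^opital identifies it. Your approach is arguably more elementary: it dispenses entirely with the Pohozaev function and in particular does \emph{not} use assumption~\eqref{hp 4 psi}, so it proves a slightly stronger statement; it also yields, as you note, the identity $L\in\{0,L_\ast\}$, which partially anticipates Lemma~\ref{lemma2.5garcia}. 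The paper's method, on the other hand, keeps everything within the Pohozaev framework already set up for the section, and is shorter once that machinery is in place.
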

\begin{proof}
It suffices to prove that the limit on the left-hand side of \eqref{limiteu'} exists; afterwards, the thesis will follow by l'H\^{o}pital's rule. To this end, we observe that assumption \eqref{hp 3 psi}, the existence of both $L$ and the limit of $P_u(r)$ as $r \to +\infty$, imply in turn the existence of 
\begin{equation}\label{mu}
        \mu:= \lim_{r\to +\infty} r^{\alpha(n-1)+1} \left[ \frac{|u'(r)|^2}{2(\alpha(n-1)+1)}  +\frac{ 1}{2^*_\alpha} \cdot \frac{u(r) \, u'(r)}{r}\right].
    \end{equation}
By Lemma \ref{lemma3.2}, we have that $\mu$ is finite and  any limit point of $r \mapsto r^{\frac{\alpha(n-1)+1}{2}} \left| u'(r) \right|$ is a nonnegative real number. Let $r_k \to +\infty$ be any sequence such that
\[
\lim_{k \to \infty} r_k^{\frac{\alpha(n-1)+1}{2}} \left| u'(r_k)\right| = \lambda \ge 0 \, .
\]
By taking the limit in \eqref{mu} along the sequence $\{r_k\}$, using \eqref{L}, we can assert that the number $\lambda$ is a solution to 
\beq \label{eq-2-g}
\mu = \frac{1}{2(\alpha(n-1)+1)} \, \lambda^2 - \frac{1 }{2^*_\alpha} \, L \, \lambda \, .
\eeq
This equation has at most two distinct solutions (say $\lambda_1 < \lambda_2$); therefore, if the limit in \eqref{limiteu'} did not exist, we would have at least two sequences $s_k, t_k \to +\infty$ such that
\[
 s_k^{\frac{\alpha(n-1)+1}{2}} \left| u'(s_k) \right| \to \lambda_1 \quad \text{and} \quad t_k^{\frac{\alpha(n-1)+1}{2}} \left|u'(t_k)\right| \to \lambda_2 \qquad \text{as } k \to \infty \, .
\]
However, by the continuity of $ r \mapsto r^{\frac{\alpha(n-1)+1}{2}} \left| u'(r) \right| $, any value $ \lambda_3 \in (\lambda_1,\lambda_2) $ would also be a limit point and therefore a third solution to \eqref{eq-2-g}, a contradiction. This means that the limit in \eqref{limiteu'} must exist, which, as observed, completes the proof.
\end{proof}

\begin{lemma}\label{lem: existence L}
Let $q=2^*_\alpha-1$ and assumptions \eqref{hp 3 psi} and \eqref{hp 4 psi} hold. If $u$ is a global positive solution to \eqref{pb Cauchy}, then the limit defined in \eqref{L} does exist and $ L \in (0,+\infty) $. 
\end{lemma}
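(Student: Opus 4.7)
The plan is to reduce \eqref{pb Cauchy} to an asymptotically autonomous planar Hamiltonian system via an Emden--Fowler substitution, read off the limiting energy from the Pohozaev function $P_u$, and then use phase-plane reasoning to rule out both oscillation and vanishing of the rescaled solution.

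Let $\gamma := \tfrac{\alpha(n-1)-1}{2}$ (which equals $\tfrac{2}{q-1}$ precisely because $q = 2_\alpha^\ast-1$), set $t := \log r$, and define $v(t) := r^{\gamma} u(r)$. By Lemma \ref{lemma3.2}, both $v$ and $\dot v := dv/dt$ are bounded on $[0,+\infty)$. A direct computation starting from the ODE in \eqref{pb Cauchy} and using \eqref{hp 3 psi} (in particular its consequence $r(n-1)\psi'(r)/\psi(r) \to \alpha(n-1) = 2\gamma+1$) shows that $v$ satisfies an asymptotically autonomous equation of the form $\ddot v + a(t)\dot v + b(t)\,v + v^q = 0$ with $a(t) \to 0$ and $b(t) \to -\gamma^2$, whose autonomous limit $\ddot v - \gamma^2 v + v^q = 0$ admits the conserved energy
\[
\mathcal{E}(v,\dot v) := \tfrac12 \dot v^2 - \tfrac{\gamma^2}{2} v^2 + \tfrac{v^{q+1}}{q+1}.
\]
Moreover, the ODE and the boundedness of its coefficients force $\ddot v$ to be bounded, so $\dot v$ is globally Lipschitz in $t$.

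I would then rewrite $P_u(r)$ in the variables $(v,\dot v)$. Exploiting \eqref{hp 3 psi}, the decay estimates of Lemma \ref{lemma3.2}, and the critical-case algebraic identity $\gamma(q+1) = \alpha(n-1)+1$, the cross-term $\psi^{n-1} u u'/(q+1)$ exactly cancels the $-\gamma v \dot v$ contribution produced by expanding $(\dot v - \gamma v)^2$, yielding
\[
P_u(r) = A_\infty\,\mathcal{E}(v(t),\dot v(t)) + o(1) \qquad \text{as } r \to +\infty, \qquad A_\infty := \frac{\kappa^{n-1}}{\alpha(n-1)+1}.
\]
Combined with Lemma \ref{lem: P_u cresce}, which supplies a strictly positive finite limit $\ell := \lim_{r \to +\infty} P_u(r) > 0$, this gives $\mathcal{E}(v(t),\dot v(t)) \to \bar\ell := \ell/A_\infty > 0$.

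The core step is to show that $v(t)$ admits a positive limit as $t \to +\infty$. Suppose, for contradiction, that $v$ is not eventually monotone, so there is a sequence of critical points $t_n \to +\infty$ with $\dot v(t_n)=0$. Then $V(v(t_n)) = \mathcal{E}(v(t_n),0) \to \bar\ell$, with $V(s) := -\gamma^2 s^2/2 + s^{q+1}/(q+1)$. A quick inspection of $V$ on $[0,+\infty)$ (it vanishes at $0$, decreases to its negative minimum at $v_\ast := \gamma^{2/(q-1)}$, returns to $0$ at $v_0 := [(q+1)\gamma^2/2]^{1/(q-1)}$, and then strictly increases to $+\infty$) shows that for $\bar\ell > 0$ the equation $V(s) = \bar\ell$ has a unique non-negative solution $L_0$, and $L_0 > v_0 > v_\ast$. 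Monotonicity of $v$ on each interval between consecutive critical points then forces $v(t) \to L_0$ as $t \to +\infty$. Lemma \ref{lemma2.4garcia} now applies and yields $r^{\gamma+1}|u'(r)| \to \gamma L_0$, whence $\dot v(t) \to 0$; substituting back into the ODE for $v$ produces $\ddot v(t) \to \gamma^2 L_0 - L_0^q$, and boundedness of $\dot v$ forces this limit to vanish, so $L_0 \in \{0,v_\ast\}$, contradicting $L_0 > v_\ast$. Therefore $v$ is eventually monotone, and being bounded converges to some $L \in [0,+\infty)$. The case $L = 0$ is excluded by the same mechanism: Lemma \ref{lemma2.4garcia} would give $\dot v \to 0$, hence $\mathcal{E}(v,\dot v) \to 0$, contradicting $\bar\ell > 0$. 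The upper bound $L < +\infty$ comes directly from Lemma \ref{lemma3.2}, so $L \in (0,+\infty)$.

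The main obstacle will be the careful derivation of the asymptotic identity $P_u(r) = A_\infty \mathcal{E}(v(t),\dot v(t)) + o(1)$, which depends critically on the exact scaling $\gamma(q+1)=\alpha(n-1)+1$ available only at $q=2_\alpha^\ast-1$ and on a uniform (rather than merely pointwise) control of the remainders generated by \eqref{hp 3 psi}. The remaining ingredients---the monotonicity/oscillation dichotomy, the identification of candidate limits with rest points of the autonomous limit system, and the exclusion of $L=0$---are standard phase-plane analyses once the Lipschitz bound on $\dot v$ and the above asymptotic expansion are in hand.
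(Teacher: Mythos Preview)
Your proof is correct and takes a genuinely different route from the paper. The paper works directly with $r^{\gamma} u(r)$, without any change of variable: it first shows $\liminf r^\gamma u > 0$ by analyzing a sequence of local minima (using the first-order condition $u'(r_k) = -\gamma u(r_k)/r_k$ together with the lower bound \eqref{P dal basso}), and then rules out non-existence of the limit by taking a sequence of local minima $r_k$ with $r_k^\gamma u(r_k) \to A \in (0,+\infty)$, combining the second-order condition with the ODE at $r_k$ to deduce $A^{q-1} \le \gamma^2$, and finally computing $P_u(+\infty)$ explicitly along this sequence to obtain a strictly negative value, contradicting $\ell>0$.

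Your Emden--Fowler substitution is more conceptual: it identifies $P_u$ (up to $o(1)$) with the Hamiltonian energy $\mathcal{E}$ of the autonomous limit system $\ddot v - \gamma^2 v + v^q = 0$, and the oscillation/monotonicity dichotomy is the phase-plane counterpart of the paper's local-extrema analysis. A bonus of your approach is that the rest-point condition $\gamma^2 L = L^q$ obtained at the end essentially yields Lemma~\ref{lemma2.5garcia} for free, whereas the paper proves that lemma separately via an integrated energy identity. The paper's approach, on the other hand, is slightly more elementary in that it avoids setting up the change of variable and verifying the asymptotically autonomous structure. One minor remark: the ``uniform control of remainders'' you flag as the main obstacle is not really an issue---the $o(1)$ term in $P_u = A_\infty \mathcal{E}(v,\dot v) + o(1)$ follows directly from the pointwise asymptotics in \eqref{hp 3 psi} combined with the boundedness of $v$ and $\dot v$ supplied by Lemma~\ref{lemma3.2}; there is no additional parameter over which uniformity must be established.
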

\begin{proof}
Since $P_u(r) \to \ell>0$ as $r \to +\infty$,  also recalling \eqref{stima int}, we have that there exists some $C>0$ such that
\beq\label{P dal basso}
r^{\alpha(n-1)+1} \left(\left|u'(r)\right|^2 + u^{2^*_\alpha}(r) \right) \geq C \qquad \text{for all $r$ large enough} \, .
\eeq
Next, we claim that 
\begin{equation}\label{stima dal basso}
    \liminf_{r \to +\infty} r^{\frac{\alpha(n-1)-1}{2}} \, u(r)>0 \, .
\end{equation}
In order to prove it, let us first show that 
\begin{equation}\label{limsup}  
    \limsup _{r\to +\infty} r^{\frac{\alpha(n-1)-1}{2}} \, u(r)>0 \, . 
\end{equation}
Indeed, if by contradiction \eqref{limsup} fails, we have 
\[
    \lim_{r\to +\infty} r^{\frac{\alpha(n-1)-1}{2}} \, u(r) = 0 \, .
\]
Therefore, from \eqref{P dal basso} we can infer that 
\begin{equation*}\label{lodevocontraddire}
 r^{\alpha(n-1)+1}\left|u'(r)\right|^2 \geq \frac C 2 \qquad \text{for all $r$ large enough} \, ,
\end{equation*} 
but on the other hand Lemma \ref{lemma2.4garcia} (in the special case $L=0$) ensures that 
\[
r^{\alpha(n-1)+1} \left|u'(r)\right|^2 \to 0 \qquad \text{as $r \to +\infty$} \, ,
\]
a contradiction. Now that \eqref{limsup} is proved, we concentrate on establishing \eqref{stima dal basso}. To such a purpose, let us assume by contradiction that the $\liminf$ is equal to $0$. This fact, together with \eqref{limsup}, entails that there exists a sequence $r_k\to +\infty$ of local minimum points for the function $r^{\frac{\alpha(n-1)-1}{2}} u(r)$ such that
\begin{equation}\label{ciportaaunassurdo}
    r_k^{\frac{\alpha(n-1)-1}{2}} \, u(r_k) \to 0 \qquad \text{as $k\to \infty$} \, ;
\end{equation}
in particular, the first and second order conditions for local minima ensure that, for all $ k \in \mathbb{N}$, 
\begin{equation}\label{u'u}
   0= \left. \left( r^{\frac{\alpha(n-1)-1}{2}} \, u(r)\right)'\right|_{r=r_k} = r_k^{\frac{\alpha(n-1)-1}{2}} \left[  u'(r_k) +\frac{\alpha(n-1)-1}{2} \cdot \frac{u(r_k)}{r_k} \right] 
\end{equation}
and 
\begin{equation}\label{u''u}
\begin{aligned}
   0 \le & \left. \left( r^{\frac{\alpha(n-1)-1}{2}} \, u(r)\right)''\right|_{r=r_k} \\
   = & \, r_k^{\frac{\alpha(n-1)-1}{2}} \left[  u''(r_k) +\left(\alpha(n-1)-1\right)  \frac{u'(r_k)}{r_k} + \frac{\left(\alpha(n-1)-1\right)\left(\alpha(n-1)-3\right)}{4} \cdot \frac{u(r_k)}{r_k^2}\right] . 
   \end{aligned}
\end{equation}
Hence, formulas \eqref{ciportaaunassurdo} and \eqref{u'u} yield 
\begin{equation*}
\begin{gathered}
r_k^{\alpha(n-1)+1} \left(\left|u'(r_k)\right|^2 + u^{2^*_\alpha}(r_k) \right) = \frac{\left(\alpha(n-1)-1\right)^2}{4}
 \left(r_k^{\frac{\alpha(n-1)-1}{2}} \, u(r_k) \right)^2 + \left(r_k^{\frac{\alpha(n-1)-1}{2}}\, u(r_k)\right)^{2^*_\alpha} \to 0
\end{gathered}
\end{equation*}
as $k \to \infty$, which is however inconsistent with \eqref{P dal basso}, and \eqref{stima dal basso} is thereby proved.

Estimate \eqref{stima dal basso} having been established, it remains to prove the existence of $L$ (recall from Lemma \ref{lemma3.2} that the $\limsup$ in \eqref{limsup} is always bounded). Again by contradiction, suppose that this is not the case. Upon reasoning as before, also exploiting \eqref{stima dal basso}, we can assert that there exists a sequence $r_k\to +\infty$ such that 
\begin{equation}\label{limiteA}
    r_k^{\frac{\alpha(n-1)-1}{2}} \, u(r_k) \to A \in (0,+\infty) \qquad \text{as $k\to \infty$} 
\end{equation}
and, in addition, \eqref{u'u}--\eqref{u''u} hold. Now, we reframe the differential equation in \eqref{pb Cauchy} as follows:
        \begin{equation}\label{reframed}
            -u''(r)-(n-1) \, \frac{\psi'(r)}{\psi(r)} \,  u'(r)= u^{2^*_\alpha-1}(r) \qquad \forall r>0 \,  .
        \end{equation}
By evaluating \eqref{reframed} at $r=r_k$ and using \eqref{u'u}--\eqref{u''u}, after some algebraic manipulations we deduce that 
\begin{equation*}
   \left[ r_k^{\frac{\alpha(n-1)-1}{2}} \, u(r_k) \right]^{2^*_\alpha-2} = r_k^2 \, u^{2^*_\alpha-2}(r_k) \leq \frac{\alpha(n-1)-1}{2}\left[(n-1) \, \frac{r_k \, \psi'(r_k)}{\psi(r_k)}- \frac{\alpha(n-1)+1}{2}\right]. 
\end{equation*}
At this point, we can take the limit as $k \to \infty$, exploiting \eqref{psi'psi} (consequence of \eqref{hp 3 psi}) and \eqref{limiteA}, to conclude that
\begin{equation}\label{Aq-1}
    A^{2^*_\alpha-2}\leq \left(\frac{\alpha(n-1)-1}{2}\right)^2. 
\end{equation}
Finally, let us show that \eqref{Aq-1} entails $P_u(+\infty)<0$, which is the desired contradiction. Indeed, at first we point out that by \eqref{u'u} and \eqref{limiteA} we have  
\begin{equation}\label{directly}
\begin{aligned}
   \lim_{k \to \infty} r_k^{\alpha(n-1)+1} \, u^{2^*_\alpha}(r_k) & = A^{2^*_\alpha} \, , \\
  \lim_{k \to \infty}  r_k^{\alpha(n-1)+1} \left|u'(r_k)\right|^{2} & = \left(\frac{\alpha(n-1)-1}{2}\right)^2 A^2 \, , \\ 
     \lim_{k \to \infty}  r_k^{\alpha(n-1)} \, u(r_k) \, u'(r_k) & = - \frac{\alpha(n-1)-1}{2} \, A^2 \, .
    \end{aligned}
\end{equation}
As a result, computing $P_u$ along $r_k$, taking the limit as $k\to \infty$ (note that \eqref{stima int} holds as an asymptotic identity with $ \kappa_1=\kappa_2=\kappa $ owing to \eqref{hp 3 psi}), and exploiting \eqref{Aq-1}--\eqref{directly}, we arrive at
\begin{equation}\label{P u inf =}
\begin{aligned}
  \frac{\alpha^{n-1}}{\kappa^{n-1}} \, P_u(+\infty) & = \frac{1}{\alpha(n-1)+1} \left[ \frac12 \left(\frac{\alpha(n-1)-1}{2}\right)^2A^2 + \frac{A^{2^*_\alpha}}{2^*_\alpha} \right] -\frac{\alpha(n-1)-1}{2} \cdot \frac{A^2}{2^*_\alpha} \\
   & \le \frac{1}{\alpha(n-1)+1} \left(\frac{\alpha(n-1)-1}{2}\right)^2A^2 \left[\frac12+\frac1{2^*_\alpha}-1\right] < 0 \, ,
   \end{aligned}
\end{equation}
which is clearly incompatible with $ P_u(+\infty) = \ell>0$.
\end{proof}

\begin{lemma}\label{lemma2.5garcia}
Let $q=2^*_\alpha-1$, assumptions \eqref{hp 3 psi} and \eqref{hp 4 psi} hold, and $u$ be a global positive solution to \eqref{pb Cauchy}. Then 
    \begin{equation*}\label{L^q-1}
        L^{2^*_\alpha-2} = \left(\frac{\alpha(n-1)-1}{2}\right)^2 ,
    \end{equation*}
    where $L$ has been defined in \eqref{L}.
    \end{lemma}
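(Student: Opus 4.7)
The plan is to recast the problem via an Emden--Fowler change of variables and read off the required algebraic identity from the resulting autonomous ODE. Set $\beta := (\alpha(n-1)-1)/2$ and introduce the logarithmic variable $t = \log r$, defining
\[
w(t) := e^{\beta t}\, u(e^t) = r^\beta\, u(r).
\]
By Lemma \ref{lem: existence L}, $w(t)\to L\in(0,+\infty)$ as $t\to +\infty$. A direct computation gives $w'(t) = e^{(\beta+1)t}\,u'(e^t) + \beta\, w(t)$, and Lemma \ref{lemma2.4garcia} yields $r^{\beta+1}\,u'(r)\to -\beta L$, so in turn $w'(t)\to 0$ as $t\to +\infty$.

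Next I would plug this change of variables into the Lane--Emden equation. Writing
\[
(n-1)\,\frac{\psi'(r)}{\psi(r)} = \frac{\alpha(n-1)}{r}\bigl(1+\epsilon(\log r)\bigr),
\]
with $\epsilon(t)\to 0$ as $t\to +\infty$ thanks to \eqref{psi'psi} (a consequence of \eqref{hp 3 psi}), using the identity $2\beta+1 = \alpha(n-1)$ in the computation of $u''(r)$ and exploiting the critical exponent relation $\beta(2^*_\alpha-1) = \beta+2$, after multiplying the transformed equation by $e^{(\beta+2)t}$ and simplifying, the coefficient of $w'$ cancels exactly and the coefficient of $w$ collapses to $\beta^2$, leaving
\[
-w''(t) + \beta^2\, w(t) + \alpha(n-1)\,\epsilon(t)\bigl(\beta\, w(t) - w'(t)\bigr) = w^{2^*_\alpha-1}(t).
\]

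To conclude, I would pass to the limit as $t\to +\infty$: since $w\to L$, $w'\to 0$, and $\epsilon\to 0$, the identity above forces $w''(t)\to c := \beta^2 L - L^{2^*_\alpha-1}$. If $c\neq 0$, then for $t$ large enough $w''$ has constant sign bounded away from zero, so integration would force $|w'(t)|\to +\infty$, contradicting the established $w'(t)\to 0$. Hence $c=0$, i.e., $L^{2^*_\alpha-2} = \beta^2 = \bigl((\alpha(n-1)-1)/2\bigr)^2$, as desired.

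The main obstacle is Step 2, namely verifying the miraculous cancellation of the $w'$ coefficient together with the collapse of the $w$ coefficient to $\beta^2$; this is not accidental but reflects the fact that $q=2^*_\alpha-1$ is precisely the critical exponent for the scaling dictated by the polynomial growth of the model, so the Emden--Fowler substitution yields a limit equation whose only admissible positive constant solution is the one prescribed in the statement. The soft integration argument producing the contradiction in the last step, by contrast, is standard once the ODE and the limits of $w$ and $w'$ are in hand.
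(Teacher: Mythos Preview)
Your proof is correct and takes a genuinely different route from the paper. The paper multiplies the equation by $u'$ and integrates over $(r,+\infty)$ to obtain an energy-type identity, then applies l'H\^{o}pital's rule to the integral term $\int_r^{+\infty}|u'|^2\psi'/\psi\,ds$ (using \eqref{psi'psi} and Lemmas \ref{lemma2.4garcia}--\ref{lem: existence L}), and finally multiplies through by $r^{\alpha(n-1)+1}$ and passes to the limit to extract the algebraic relation. Your Emden--Fowler substitution is more direct: the critical scaling $\beta=(\alpha(n-1)-1)/2$ makes the transformed equation asymptotically autonomous, and the ``miraculous cancellation'' you mention is simply the identity $2\beta+1=\alpha(n-1)$ together with $\beta(2^*_\alpha-1)=\beta+2$; the soft argument forcing $w''\to 0$ then immediately yields $L^{2^*_\alpha-2}=\beta^2$. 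Both proofs rest on the same preliminary input (existence of $L$ and the limit for $r^{\beta+1}u'$), but yours avoids the integral computation and l'H\^{o}pital step, at the cost of a slightly more involved change-of-variables calculation. The paper's approach, on the other hand, stays closer to the Pohozaev-function machinery already in use throughout the section.
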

    \begin{proof}
    Since $u$ solves \eqref{reframed}, multiplying by $u'$ and integrating over $(r,+\infty)$ we deduce that
        \begin{equation}\label{E1}
            -\frac{\left[u'(r)\right]^2}{2}+(n-1) \int_r^{+\infty} \left|u'\right|^2 \frac{\psi'}{\psi}\,ds = \frac{u^{2^*_\alpha}(r)}{2^*_{\alpha}} \qquad  \forall r>0 \, .
        \end{equation}
Note that, in view of the estimates of Lemma \ref{lemma3.2} and \eqref{psi'psi}, we have $[u']^2 \, \psi'/\psi \in L^1([1,+\infty))$; in particular, by l'H\^{o}pital's rule, we may infer that
   \begin{equation}\label{E2}
\lim_{r \to +\infty}  r^{\alpha(n-1)+1}\int_r^{+\infty} \left|u'\right|^2 \frac{\psi'}{\psi}\,ds = \frac{\alpha}{\alpha(n-1)+1} \left(\frac{\alpha(n-1)-1}{2}\right)^2 L^2 \, ,
         \end{equation}
where we have taken advantage of Lemmas \ref{lemma2.4garcia}--\ref{lem: existence L} as well. Now, the idea is to multiply both sides of \eqref{E1} by $r^{\alpha(n-1)+1}$ and take the limit as $r \to +\infty$; again using Lemmas \ref{lemma2.4garcia}--\ref{lem: existence L}, along with \eqref{E2}, we end up with  
       \[
       - \frac12\left(\frac{\alpha(n-1)-1}{2}\right)^2 L^2 +\frac{\alpha(n-1)}{\alpha(n-1)+1} \left(\frac{\alpha(n-1)-1}{2}\right)^2 L^2 = \frac{L^{2^*_\alpha}}{2^*_\alpha} \, ,
       \]
      which is readily seen to be equivalent to the thesis.
    \end{proof}
    
    Now we are in the position to prove Theorem \ref{thm: non-ex int} also in the critical case $q=2^*_\alpha-1$. 
\begin{proof}[Proof of Theorem \ref{thm: non-ex int} for $q=2^*_\alpha-1$]
%
%
Assume by contradiction that a positive radial solution $u$ to \eqref{LE} exists. Recall that $P_u(+\infty)=\ell>0$, thanks to Lemma \ref{lem: P_u cresce}. Taking advantage of Lemmas \ref{lemma2.4garcia}--\ref{lem: existence L}, and proceeding exactly as in the proof of the first equality in \eqref{P u inf =}, we obtain: 
\[
\begin{aligned}
0<\frac{\alpha^{n-1}}{\kappa^{n-1}} \, \ell & = \frac{1}{\alpha(n-1)+1} \left[ \frac12 \left(\frac{\alpha(n-1)-1}{2}\right)^2 L^2 + \frac{L^{2^*_\alpha}}{2^*_\alpha} \right] -\frac{\alpha(n-1)-1}{2} \cdot \frac{L^2}{2^*_\alpha}  \\
& = \frac{L^2}{2(\alpha(n-1)+1)} \left[ \left(\frac{\alpha(n-1)-1}{2}\right)^2+ \frac{\alpha(n-1)-1}{\alpha(n-1)+1} \, L^{2^*_\alpha-2} - \frac{(\alpha(n-1)-1)^2}{2}\right].
\end{aligned}
\]
However, owing to Lemma \ref{lemma2.5garcia}, this would give
\[
0<\frac{\alpha^{n-1}}{\kappa^{n-1}} \, \ell  = \frac{(\alpha(n-1) -1)^2 \, L^2}{8(\alpha(n-1)+1)} \left[1+ \frac{\alpha(n-1)-1}{\alpha(n-1)+1} -2\right] = \frac{(\alpha(n-1) -1)^2 \, L^2}{8(\alpha(n-1)+1)} \left[ \frac{2}{2^*_\alpha} - 1  \right] < 0 \, ,
\]
a contradiction.
\end{proof}

\subsection{Examples of model functions satisfying the assumptions of Theorem \ref{thm: non-ex int}}\label{sub: examples non-ex}

At first, we provide a sufficient condition for the validity of \eqref{hp 4 psi}, which turns out to be very useful for the examples we will construct.

\begin{lemma}\label{lem: suff cond non-ex}
Let $\alpha>1$ and $\psi$ be a model function satisfying
\beq\label{hp 5 psi}
\psi(r) \, \psi''(r) \le \frac{\alpha-1}{\alpha} \left[ \psi'(r)\right]^2 \qquad \forall r>0 \, .
\eeq
Then \eqref{hp 4 psi} holds.
\end{lemma}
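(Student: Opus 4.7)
\medskip

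\noindent\textbf{Proof proposal.} The key observation is that \eqref{hp 5 psi} is \emph{equivalent} to concavity of the power $\phi := \psi^{1/\alpha}$. Indeed, a direct computation gives
\[
\left(\psi^{1/\alpha}\right)''(r) = \frac{1}{\alpha}\, \psi^{1/\alpha-2}(r) \left[ \psi(r) \, \psi''(r) - \frac{\alpha-1}{\alpha}\, [\psi'(r)]^2 \right] ,
\]
so that \eqref{hp 5 psi} is exactly the condition $\phi'' \le 0$ on $(0,+\infty)$. Since $\phi(0)=0$ and $\phi'>0$ near $0$, this means $\phi'$ is non-increasing on $(0,+\infty)$.

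The plan is then to rewrite \eqref{hp 4 psi} in the $\phi$-variable. Setting $N:=\alpha(n-1)+1$, one has $\psi^{n-1} = \phi^{N-1}$, $\psi^{n} = \phi^{N-1}\, \phi^{\alpha} $, and $\psi' = \alpha \, \phi^{\alpha-1} \phi'$, so
\[
(n-1)\, \frac{\psi'(r)}{\psi^{n}(r)} \int_0^r \psi^{n-1}\, ds = \alpha(n-1) \cdot \frac{\phi'(r)}{\phi^{N}(r)} \int_0^r \phi^{N-1}(s)\, ds\, .
\]
Hence it suffices to prove the pointwise inequality
\begin{equation}\label{eq:keyineq}
\phi'(r) \int_0^r \phi^{N-1}(s)\, ds \le \frac{\phi^{N}(r)}{N} \qquad \forall r>0 \, ,
\end{equation}
since the right-hand side of \eqref{hp 4 psi} is minimized at $q = 2^*_\alpha-1$, where it equals $\frac{\alpha(n-1)}{\alpha(n-1)+1}=\alpha(n-1)/N$; the case $q < 2^*_\alpha-1$ then follows a fortiori.

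To establish \eqref{eq:keyineq}, I would write
\[
\frac{\phi^N(r)}{N} = \int_0^r \phi'(s)\, \phi^{N-1}(s)\, ds \, ,
\]
so that \eqref{eq:keyineq} becomes
\[
\int_0^r \left[ \phi'(s) - \phi'(r) \right] \phi^{N-1}(s)\, ds \ge 0 \, ,
\]
which is immediate because $\phi'$ is non-increasing (so $\phi'(s) \ge \phi'(r)$ for all $s \in [0,r]$) and $\phi^{N-1} \ge 0$. I expect no real obstacle here; the only minor point to address is the mild regularity issue near $r=0$, where $\phi(r) \sim r^{1/\alpha}$ so $\phi'$ blows up, but since $\phi^{N-1}$ vanishes at a faster rate all integrals are convergent and the monotonicity argument applies verbatim.
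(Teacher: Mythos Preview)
Your argument is correct. The substitution $\phi=\psi^{1/\alpha}$ does turn \eqref{hp 5 psi} into the concavity $\phi''\le 0$, and the resulting inequality $\phi'(r)\int_0^r \phi^{N-1}\,ds \le \phi^N(r)/N$ follows immediately from the monotonicity of $\phi'$ together with $\phi^N(r)/N=\int_0^r \phi'\,\phi^{N-1}\,ds$. The regularity worry you flag is indeed harmless: although $\phi'(s)\sim \tfrac{1}{\alpha}s^{1/\alpha-1}$ blows up at the origin, the product $\phi'\,\phi^{N-1}$ behaves like $s^{N/\alpha-1}=s^{(n-1)+1/\alpha-1}$, which is bounded (even vanishing) for $n\ge 2$.

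This is a genuinely different route from the paper's. The paper rewrites \eqref{hp 5 psi} (assuming first $\psi'>0$) as $(\psi/\psi')'\ge 1/\alpha$ and then performs an integration by parts directly on $\int_0^r \psi^{n-1}\,ds$, inserting $\psi'/\psi\cdot\psi/\psi'$ and using the lower bound on $(\psi/\psi')'$. The two computations are equivalent after the change of variables (your inequality is exactly what the paper's integration by parts yields once translated), but your packaging has two concrete advantages: it makes the geometric content of \eqref{hp 5 psi} transparent (concavity of a power), and it avoids altogether the need to divide by $\psi'$. In the paper this forces a separate discussion to rule out the scenario where $\psi'$ vanishes and later becomes positive again; in your approach, if $\phi'(r)\le 0$ the left-hand side of \eqref{hp 4 psi} is automatically $\le 0$, and if $\phi'(r)>0$ the monotonicity argument applies, so no case split is needed.
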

\begin{proof}
Without loss of generality, we may assume that $\psi'>0$ everywhere (we will come back to this at the end of the proof). Therefore, we can rewrite \eqref{hp 5 psi} as
\beq\label{hp 5 psi - equiv}
\left(\frac{\psi(r)}{\psi'(r)}\right)' \ge \frac1{\alpha} \qquad \forall r >0 \, .
\eeq 
At this point, integrating by parts, we obtain
\[
\begin{split}
\int_0^r \psi^{n-1} \,ds &= \int_0^r \psi^{n-1} \, \frac{\psi'}{\psi} \, \frac{\psi}{\psi'} \, ds = \frac{\psi^{n-1}(r)}{n-1} \cdot \frac{\psi(r)}{\psi'(r)} - \int_0^r \frac{\psi^{n-1}}{n-1} \left(\frac{\psi}{\psi'}\right)' ds \\
& \le \frac{\psi^n(r)}{(n-1) \, \psi'(r)} - \frac1{\alpha(n-1)} \int_0^r \psi^{n-1}\,ds \, .
\end{split}
\]
As a result,
\[
\frac{\alpha(n-1)+1}{\alpha(n-1)}\int_0^r \psi^{n-1}\,ds \le \frac{\psi^n(r)}{(n-1) \,\psi'(r)} \, ,
\]
whence it follows that
\beq\label{poho-ex}
\frac{(n-1) \, \psi'(r)}{\psi^n(r)} \int_0^r \psi^{n-1}\,ds \le \frac{\alpha(n-1)}{\alpha(n-1)+1}= \frac12 + \frac{1}{2^*_\alpha} \, .
\eeq
Thus, condition \eqref{hp 4 psi} is satisfied for any $q\le 2^*_\alpha-1$.

To complete the proof, suppose that $\psi'$ is not everywhere positive. Then, since $\psi'(0)=1$, there exists some interval $[0,r_0]$ such that $ \psi'>0 $ in $ [0,r_0) $ and $ \psi'(r_0)=0 $. Upon arguing exactly as in the first part, we can assert that \eqref{poho-ex} holds at least in $[0,r_0]$. Now there are two possibilities: either $ \psi' \le 0 $ in $ (r_0,+\infty) $ or there exists some $ r_2>r_0 $ such that $ \psi'(r_2)>0 $. In the first case, it is apparent that \eqref{poho-ex} also holds in $ (r_0,+\infty) $. Hence, to complete the proof, let us rule out the second case. Still from the continuity of $ \psi' $, if that occurs, there necessarily exists some  $ r_1 \in [r_0,r_2) $ such that $\psi'(r_1)=0$ and $ \psi'>0 $ in $ (r_1,r_2] $. On such an interval, we can then rewrite \eqref{hp 5 psi} as
$$
\left[\log\!\left(\psi'(r)\right) \right]' \le \frac{\alpha-1}{\alpha} \left[\log\!\left(\psi(r)\right) \right]'  \qquad \forall r \in (r_1,r_2) \, ;
$$
however, a simple integration of the above inequality leads to the contradiction 
\[
+\infty = \log\!\left(\psi'(r_2)\right) - \lim_{r \downarrow r_1}\log\!\left(\psi'(r)\right) \le \frac{\alpha-1}{\alpha} \left[\log\!\left(\psi(r_2)\right) - \log\!\left(\psi(r_1)\right) \right]  . \qedhere
\]
\end{proof}

The first example we propose is a very simple explicit model.

\begin{example}
Let $\alpha>1$, and let 
\[
\psi(r) = \frac{(1+r)^\alpha}{\alpha}-\frac{1}{\alpha} \, .
\]
Then $\psi$ satisfies \eqref{hp 1 psi}, \eqref{hp 3 psi}, and \eqref{hp 4 psi} for every $q \in \left(1,2^*_\alpha-1\right]$. Indeed, with regards to the latter property, it is immediate to check that \eqref{hp 5 psi} holds, so that Lemma \ref{lem: suff cond non-ex} applies.
\end{example}

The manifold $\Mb^n$ associated with the above choice of $\psi$ is $C^1$, however, since $\psi''(0) \neq 0$, it is not $C^2$ (of course, smoothness is lost at the pole $o$ only). In what follows we provide a class of examples of $C^\infty$ Cartan-Hadamard model manifolds satisfying \eqref{hp 5 psi} in which, according to Theorem \ref{thm: non-ex int} and Lemma \ref{lem: suff cond non-ex}, we have a non-existence result for radial solutions of \eqref{LE}.

\begin{lemma}\label{c-inf}
Let $F \in C^\infty([0,+\infty))$ with $ F(0)=F'(0)=0 $. Then $ G(r):=F(r)/r^2 \in C^\infty([0,+\infty))$. 
\end{lemma}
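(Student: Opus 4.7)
The plan is to reduce the claim to the following auxiliary fact, which is the standard ``Hadamard division lemma'': if $H \in C^\infty([0,+\infty))$ satisfies $H(0) = 0$, then $H(r)/r$ extends to a $C^\infty$ function on $[0,+\infty)$, and its value at $r=0$ equals $H'(0)$. This auxiliary fact follows from the integral representation
\[
H(r) = \int_0^1 \frac{d}{dt} H(tr) \, dt = r \int_0^1 H'(tr)\,dt \qquad \forall r \ge 0 \, ,
\]
so that $H(r)/r = \int_0^1 H'(tr)\,dt$; this right-hand side is smooth on $[0,+\infty)$ by standard differentiation under the integral sign, and it equals $H'(0)$ at $r=0$.

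First I would apply the auxiliary fact to $H := F$, which is permissible since $F(0) = 0$. This yields a function $F_1 \in C^\infty([0,+\infty))$ with $F_1(r) = F(r)/r$ for $r>0$ and $F_1(0) = F'(0) = 0$. Then, since $F_1(0) = 0$, the auxiliary fact applies once more with $H := F_1$, giving that $F_1(r)/r$ extends to a $C^\infty$ function on $[0,+\infty)$. But $F_1(r)/r = F(r)/r^2 = G(r)$ for $r>0$, so $G \in C^\infty([0,+\infty))$ as desired.

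No step here is truly difficult; the only subtle point, and the one worth writing out carefully, is to justify why the integral representation of the auxiliary fact is indeed smooth in $r$ up to and including $r=0$, and why all derivatives may be computed by differentiating under the integral sign. This is standard and follows from the fact that, for any fixed $k \in \mathbb{N}$, the function $(t,r) \mapsto t^k H^{(k+1)}(tr)$ is continuous on $[0,1] \times [0,+\infty)$ and locally uniformly bounded in $r$, which legitimates the iterated application of Leibniz's rule.
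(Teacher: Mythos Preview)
Your proof is correct and in fact cleaner than the paper's. You use the Hadamard division lemma (via the integral representation $H(r)=r\int_0^1 H'(tr)\,dt$) twice, which immediately gives smoothness up to $r=0$ once differentiation under the integral sign is justified. The paper instead writes out explicitly
\[
G^{(k)}(r)=\sum_{j=0}^{k} a_j\,\frac{F^{(k-j)}(r)}{r^{j+2}}
\]
and argues by induction on $k$, using Taylor expansions of $F$ and its derivatives at $0$ to rule out negative-power blow-up of $G^{(k)}$ near the origin. Your approach is shorter, conceptually standard, and avoids tracking the combinatorics of repeated quotient differentiation; the paper's approach is more explicit about the structure of $G^{(k)}$ but requires a somewhat ad hoc dichotomy argument. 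Both reach the same conclusion, and nothing in the rest of the paper depends on the specific form of the argument, so your version is a perfectly good substitute.
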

\begin{proof}
It is plain that $ G \in C^\infty((0,+\infty)) $, so we only need to prove that $G$ and all of its derivatives are continuous down to $r=0$. By using iteratively the product rule for the derivative of a function, one can easily verify that $ G^{(k)} $, for any $ k \in \mathbb{N} \setminus\{ 0 \} $, has the following form:
\beq\label{exp-1}
G^{(k)}(r) = \sum_{j=0}^k a_j \, \frac{F^{(k-j)}(r)}{r^{j+2}} \qquad \forall r>0 \, ,
\eeq
for suitable real coefficients $ \{ a_j \}_{j=0\ldots k} $ that we do not need to compute explicitly. Because $F \in  C^\infty([0,+\infty)) $, in view of the Taylor expansions of $F$ and its derivatives centered at $r=0$, from \eqref{exp-1} we deduce that either
\beq\label{c1}
G^{(k)}(r) = o(r^m) \qquad \text{as } r \to 0^+ \, , \ \forall m \in \mathbb{N} \, ,
\eeq
or there exist a real number $ b \neq 0 $ and an integer $ J \le k+2 $ such that 
\beq\label{c2}
\lim_{r \to 0^+} r^{J} \, G^{(k)}(r)  = b \, .
\eeq
Next, let us complete the proof by induction, \emph{i.e.}, assuming that $ G^{(k-1)} $ is continuous at $r=0$, we aim at showing that $ G^{(k)} $ is also continuous at $r=0$. Note that the base case is true since, in view of the assumptions, $ G $ is continuous at $r=0$, with $ G(0) = F''(0)/2 $. As concerns the induction step, there are two possibilities. If \eqref{c1} holds, then it is apparent that $ G^{(k)} $ is continuous at $r=0$. If, on the contrary, \eqref{c2} holds, we need to make sure that $ J $ cannot be positive; on the other hand, if $ J $ were a positive integer, then 
$$
\left[G^{(k-1)}\right]'(r) = G^{(k)}(r) \sim \frac{b}{r^J} \qquad \text{as } r \to 0^+ \, ,
$$
which is clearly incompatible with the assumed continuity of $ G^{(k-1)} $ at $r=0$. Hence, $J$ is necessarily nonpositive, ensuring that $ G^{(k)} $ is continuous at $r=0$ as desired.
\end{proof}

\begin{example}\label{eee}
Integrating twice the differential equation
\begin{equation}\label{condizione}
    \left(\frac{\psi(r)}{\psi'(r)}\right)' =\frac{1}{\alpha} + \frac{\alpha -1}{\alpha}\cdot\frac{1}{1+r^2}
\end{equation}
we get 
\begin{equation}\label{psi}
    \psi (r)= r\cdot\exp\left[(\alpha-1)\int_0^r\frac{s-\arctan s}{s[s+(\alpha-1)\arctan s]} \, ds\right],
\end{equation}
and it is easy to check that \eqref{condizione} implies \eqref{hp 5 psi}. Also, a model manifold associated with such a function $\psi$ turns out to be Cartan-Hadamard: indeed, using again \eqref{condizione}, it holds 
\begin{equation*}
     \mathrm{Sect}_{\omega}(r)= -\frac{\psi''(r)}{\psi(r)} = -\left(\frac{\psi'(r)}{\psi(r)}\right)^2\cdot\frac{\alpha-1}{\alpha}\cdot \frac{r^2}{1+r^2}\leq 0 \, .
\end{equation*}
Furthermore, $\psi$ is $C^\infty([0,+\infty))$ and it complies with both \eqref{hp 1 psi'} for all $k \in \mathbb{N}$ and \eqref{hp 3 psi} (we will justify these properties more in general in the next example). Hence, the model manifold $ \mathbb{M}^n $ associated with such a $\psi$ is of class $C^\infty$ and satisfies all of the hypotheses of Theorem \ref{thm: non-ex int}.
\end{example}

\begin{example}
Now we aim at generalizing Example \ref{eee}. To this end, let us consider, in the place of \eqref{psi}, the following type of model function: 
\begin{equation*}\label{psigenerale}
     \psi (r)= r\cdot\exp\left[(\alpha-1)\int_0^r\frac{s-f(s)}{s\left[s+(\alpha-1) f(s)\right]} \, ds\right],
\end{equation*}
where $ f \in C^\infty([0,+\infty)) $ is an arbitrary  satisfying the following assumptions:
\beq\label{cond-f-1}
f^{(2k)}(0)=0 \quad \forall k \in \mathbb{N} \, , \qquad f'(0)=1 \, , \qquad 0 \le f'(r) \le 1 \quad \forall r \ge 0 \, , 
\eeq
and 
\beq\label{cond-f-2}
\qquad f(r) = o(r) \quad \text{as } r \to +\infty  \, , \qquad \int_1^{+\infty} \frac{f(s)}{s^2} \, ds < +\infty  \, .
\eeq
First of all, let us verify that $ \psi \in C^\infty([0,+\infty)) $. To this end, it is readily seen that both the functions
$$
r \mapsto r-f(r) \qquad \text{and} \qquad  r \mapsto r\left[r+(\alpha-1) f(r)\right]
$$
satisfy the hypotheses of Lemma \ref{c-inf}, whence
$$
F(r) := \frac{r-f(r)}{r^2} \in C^\infty([0,+\infty)) \qquad \text{and} \qquad G(r):= \frac{r \left[r+(\alpha-1) f(r)\right]}{r^2} \in C^\infty([0,+\infty)) \, .
$$
Furthermore, since $G(r) > 0$ for all $r \ge 0$ (simple consequence of $ f'(0) = 1 $ and the monotonicity of $f$), we can deduce that
$$
H(r) := \frac{r-f(r)}{r\left[r+(\alpha-1) f(r)\right]} = \frac{F(r)}{G(r)} \in  C^\infty([0,+\infty)) \, ,
$$
which clearly yields the claim. Also, from the leftmost assumption in \eqref{cond-f-1} it is not difficult to check that \eqref{hp 1 psi'} holds for all $m \in \mathbb{N}$, so that the model manifold $\mathbb{M}^n$ associated with $\psi$ is indeed smooth. Next, let us confirm that the hypotheses of Theorem \ref{thm: non-ex int} are met. An explicit computation gives
\begin{equation*}
    \left(\frac{\psi(r)}{\psi'(r)}\right)' =\frac{1}{\alpha} + \frac{\alpha -1}{\alpha} \, f'(r) \, ,
\end{equation*}
which, owing to $f' \le 0$, ensures the validity of \eqref{hp 5 psi - equiv} (and therefore of \eqref{hp 5 psi}). In addition, the condition $ f' \le 1 $ yields
\begin{equation*}
    \mathrm{Sect}_{\omega}(r)= -\frac{\psi''(r)}{\psi(r)} = \left(\frac{\psi'(r)}{\psi(r)}\right)^2\cdot\frac{\alpha-1}{\alpha}\left[-1+f'(r) \right] \leq 0 \, ,
\end{equation*}
so that $ \mathbb{M}^n $ is actually a Cartan-Hadamard model. Finally, as a consequence of \eqref{cond-f-2}, one can easily check that \eqref{hp 3 psi} is also met.
\end{example}
 
\subsection{Existence of solutions on certain Cartan-Hadamard manifolds}
In this subsection, we fix once for all $\alpha>1$ and $q \in \left(\tilde 2_\alpha, 2^*_\alpha-1 \right)$. Our strategy of proof of Theorem \ref{thm: ex int} consists in explicitly constructing a model function $\psi$, that is, a positive function $ \psi \in C^\infty([0,+\infty)) $ satisfying \eqref{hp 1 psi} and \eqref{hp 1 psi'}, such that \eqref{hp 3 psi} holds and \eqref{LE} admits at least a radial solution on the associated manifold $\Mb^n$. Moreover, we are able to make sure that $ \psi $ is convex, \emph{i.e.}~$ \Mb^n $ is a \emph{Cartan-Hadamard} manifold. The proof is divided into three main steps. The first one, which is the content of the next lemma, provides the basic idea of the construction, but provides only a \emph{Lipschitz} continuous model function, which needs to be carefully smoothened in the subsequent steps. 

\begin{lemma}\label{lemmaLip}
There exist a locally Lipschitz continuous and convex function $\bar \psi:[0,+\infty) \to [0,+\infty)$, which is positive in $ (0,+\infty) $,  $C^\infty$ in $\left[0,\bar r_0\right) \cup \left(\bar r_0, +\infty\right)$ for some $\bar r_0>0$, satisfies \eqref{hp 1 psi}, \eqref{hp 1 psi'}, and \eqref{hp 3 psi}, and a corresponding $C^1$ function $\bar u:[0,+\infty) \to (0,+\infty)$ such that 
\beq\label{eq psi lip}
-\bar u'' - (n-1) \, \frac{\bar \psi'}{\bar \psi} \, \bar u' = \bar u^q \qquad \text{in $(0,+\infty) \setminus \{\bar r_0\}$} \, ,
\eeq
with $\bar u'(0)=0$ and $\bar u(0)>0$.
\end{lemma}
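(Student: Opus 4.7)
The plan is to construct $\bar u$ by gluing an explicit smooth inner profile on $[0,\bar r_0]$ to the Emden--Fowler-type outer function on $[\bar r_0,+\infty)$, and to recover $\bar\psi$ from the algebraic rearrangement
\beq\label{plan-rearrange}
\frac{\bar\psi'(r)}{\bar\psi(r)}\;=\;-\,\frac{\bar u''(r)+\bar u(r)^q}{(n-1)\,\bar u'(r)}
\eeq
of \eqref{eq psi lip}, which is meaningful whenever $\bar u'<0$ and $\bar u''+\bar u^q>0$. The outer ansatz is provided by the observation that, since $q>\tilde 2_\alpha$, the constant
\[
c:=\left(\frac{2\bigl[(n-1)\alpha(q-1)-(q+1)\bigr]}{(q-1)^2}\right)^{\!\!1/(q-1)}
\]
is strictly positive, and a direct substitution shows that for every $\kappa>0$ the pair $\bar u_\infty(r):=c\,r^{-2/(q-1)}$, $\bar\psi_\infty(r):=\kappa\,r^\alpha$ solves \eqref{eq psi lip} on $(0,+\infty)$. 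I will use this pair on $[\bar r_0,+\infty)$, for some $\bar r_0>0$ to be chosen.

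For the inner piece, given parameters $a>c\,\bar r_0^{-2/(q-1)}$, I select (via Hermite interpolation) a $C^\infty$ function $\bar u$ on $[0,\bar r_0]$ whose odd-order derivatives at $r=0$ all vanish and which satisfies
\[
\bar u(0)=a,\quad \bar u'(0)=0,\quad \bar u''(0)=-\tfrac{a^q}{n},\qquad \bar u(\bar r_0)=c\,\bar r_0^{-\frac{2}{q-1}},\quad \bar u'(\bar r_0)=-\tfrac{2c}{q-1}\,\bar r_0^{-\frac{2}{q-1}-1},
\]
together with the strict inequalities $\bar u'<0$ and $\bar u''+\bar u^q>0$ on $(0,\bar r_0]$. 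These inequalities hold at the endpoints by direct computation---at $r=0$ one has $\bar u''(0)+\bar u(0)^q=a^q(n-1)/n>0$; at $r=\bar r_0$ they are inherited from the outer ansatz---so they can be propagated to the whole interval by keeping the interpolant close to the monotone interpolation between the endpoint data. Given such a $\bar u$, I define $\bar\psi$ on $[0,\bar r_0]$ as the unique positive solution of \eqref{plan-rearrange} with $\bar\psi(r)/r\to 1$ as $r\to 0^+$. The normalization $\bar u''(0)=-a^q/n$ is precisely what forces the right-hand side of \eqref{plan-rearrange} to behave as $1/r+O(r)$ near the origin, so $\bar\psi$ is smooth up to $r=0$ with $\bar\psi(0)=0$ and $\bar\psi'(0)=1$. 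Moreover, the evenness of $\bar u$ in $r$ makes $\bar u''+\bar u^q$ even and $\bar u'$ odd, hence the right-hand side of \eqref{plan-rearrange} is odd in $r$; its primitive is therefore even, yielding $\bar\psi(r)=r\,H(r^2)$ for some smooth positive $H$ with $H(0)=1$, and in particular $\bar\psi^{(2k)}(0)=0$ for every $k\geq 1$, i.e.~\eqref{hp 1 psi'}.

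I then extend $\bar\psi$ to $[\bar r_0,+\infty)$ by $\bar\psi(r):=\kappa\,r^\alpha$ with $\kappa:=\bar\psi(\bar r_0)/\bar r_0^\alpha$, so that $\bar\psi$ is continuous at $\bar r_0$, globally Lipschitz and $C^\infty$ on each side (with a corner at $\bar r_0$ since $\bar u''$ jumps there). The $C^1$ matching of $\bar u$ at $\bar r_0$ follows from the prescribed endpoint values, and \eqref{eq psi lip} holds on $(0,+\infty)\setminus\{\bar r_0\}$ by construction. The asymptotics \eqref{hp 3 psi} is immediate from $\bar\psi(r)=\kappa\,r^\alpha$ for $r\geq\bar r_0$. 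As for convexity: on $[\bar r_0,+\infty)$ it follows from $\bar\psi''(r)=\kappa\alpha(\alpha-1)r^{\alpha-2}>0$ because $\alpha>1$; across the corner, $\bar\psi'(\bar r_0^+)>\bar\psi'(\bar r_0^-)$ as long as the interpolation is arranged so that $\bar u''(\bar r_0^-)<\bar u_\infty''(\bar r_0)$ (which can be done, since $\bar u_\infty''(\bar r_0)>0$ while $\bar u''(\bar r_0^-)$ can be kept negative); on $[0,\bar r_0]$ convexity is an open condition which, for instance, can be enforced near the origin by choosing the next Taylor coefficient of $\bar u$ at $0$ large enough (a short computation reduces $\bar\psi'''(0)\geq 0$ to such a choice) and then propagated to the whole interval.

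The main obstacle is therefore the simultaneous enforcement of all the open conditions---the two sign inequalities on $\bar u$ and the convexity of $\bar\psi$ on the inner interval---together with the prescribed Hermite endpoint data. This is an elementary but slightly delicate exercise in smooth interpolation, feasible precisely because each inequality is strict at the endpoints, so the set of valid interpolants is non-empty and stable under small $C^k$ perturbations.
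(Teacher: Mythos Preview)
Your construction inverts the paper's logic: you build $\bar u$ first (by Hermite interpolation) and recover $\bar\psi$ from \eqref{plan-rearrange}, whereas the paper fixes $\bar\psi$ on each piece and lets $\bar u$ come from known ODE solutions. Concretely, the paper takes $\bar\psi=\sinh r$ on the inner interval, with $w_1$ the unique positive radial solution of the homogeneous Dirichlet problem in the unit geodesic ball of $\mathbb{H}^n$; on the outer piece it uses the \emph{shifted} Emden--Fowler pair $w_2(r)=c\,(r-\tilde r_0)^{-2/(q-1)}$, $\bar\psi(r)=\kappa\,(r-\tilde r_0)^\alpha$. The free shift $\tilde r_0$ is then slid until the graphs of $w_1$ and $w_2$ touch tangentially at some $\bar r_0\in(0,1)$, which simultaneously yields $\bar u\in C^1$ and the second-order inequality $w_1''(\bar r_0)\le w_2''(\bar r_0)$ (so $\bar\psi'$ has a nonnegative jump at $\bar r_0$). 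Convexity of $\bar\psi$ is then \emph{free}: both $\sinh r$ and $\kappa\,(r-\tilde r_0)^\alpha$ are convex by inspection.

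This is precisely where your argument has a genuine gap. When $\bar\psi$ is the derived object, its convexity on $[0,\bar r_0]$ amounts to $(\bar\psi'/\bar\psi)'+(\bar\psi'/\bar\psi)^2\ge 0$, which through \eqref{plan-rearrange} is a \emph{nonlinear third-order} pointwise constraint on $\bar u$. Saying it is ``an open condition'' that can be ``enforced near the origin'' via a Taylor coefficient and then ``propagated to the whole interval'' is not a proof: Hermite interpolation with prescribed two-point data gives no mechanism to control such an expression in the interior, and the constraints pull against one another. The coefficient of $\bar u'''$ in $(\bar\psi'/\bar\psi)'$ equals $-1/[(n-1)\bar u']>0$, so pushing $\bar u'''$ large helps convexity but drives $\bar u''$ upward, threatening both $\bar u''(\bar r_0^-)<\bar u_\infty''(\bar r_0)$ and $\bar u'<0$. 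That each inequality is strict at the endpoints does not imply the set of interpolants satisfying \emph{all} of them simultaneously is nonempty. To salvage this route you would have to exhibit an explicit interpolant and verify convexity of the resulting $\bar\psi$ by hand, or---as the paper does---reverse the roles and prescribe a convex $\bar\psi$ on the inner piece from the outset.
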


\begin{proof}
Let $w_1$ be the unique positive solution to 
\beq \label{eq:w1}
-w_1'' - (n-1) \, \coth(r) \, w_1' = w_1^q \qquad \text{in $(0,1)$} \, ,
\eeq
with $w_1'(0)=0$ and $w_1(1)=0$, namely $w_1$ is the positive radial solution of the homogeneous Dirichlet problem \eqref{loc-dir} in the geodesic unit ball of the hyperbolic space (for the existence and uniqueness of such a solution we refer to \cite{MaSa} or \cite{BeFeGr}). Also, let 
\begin{equation}\label{w_2}
 w_2(r):= \frac{c}{(r-r_0)^{\frac{2}{q-1}}} \qquad \text{for $r>r_0$} \, ,   
\end{equation}
where $r_0 \in \R $ will be determined in the sequel and
\[
c \equiv c\!\left(\alpha, n, q\right) := \left[\frac{2}{q-1} \left( \alpha(n-1)-\frac{q+1}{q-1}\right) \right]^\frac{1}{q-1} > 0 \, .
\]
We point out that $r_0$, hence also $r$, may be negative in \eqref{w_2}. By a direct calculation, one finds that $w_2$ satisfies
\beq\label{eq:w2}
-w_2'' - (n-1) \, \frac{\alpha}{(r-r_0)} \, w_2' = w_2^q \qquad \text{in $\left(r_0,+\infty\right)$} \, .
\eeq
The idea is to glue $w_1$ and $w_2$ in a convenient way. To this end, we observe that by definition the graphs of $w_1$ and $w_2$ do not intersect for $r_0 \ge 1 $; on the other hand, since $ w_2 $ vanishes uniformly in $ (0,+\infty) $ as $ r_0 \to -\infty $, they do intersect if $r_0 \ll -1$. Thus, we have that 
\[
\tilde r_0:= \inf \left\{r_0 \in \R: \ \text{the graphs of $w_1$ and $w_2$ do not intersect}\right\}
\]
is a real number. Upon choosing $r_0=\tilde r_0$ in \eqref{w_2}, from the definition of infimum we infer that the graphs of $w_1$ and $w_2$ necessarily intersect \emph{tangentially} at some point $\bar r_0>\max\{\tilde r_0,0\}$ (see Figure \ref{w1-w2}). 
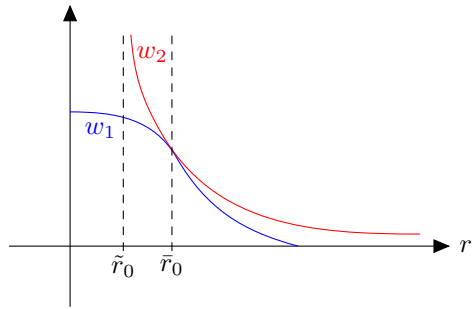
\begin{figure}[ht]
\begin{tikzpicture}[line cap=round,line join=round,>=triangle 45, scale=2]
\draw [->] (-0.4,0)-- (2.5,0) node [right] {$r$};
\draw [->] (0,-0.4)-- (0,1.6) node [right] {};
\draw[color=black] (0.2,0.77) node {{\color{blue}$w_1$}};
\draw [blue]  (0,0.89) to [out=0, in=120] (0.7,0.588) to [out=300, in=165] (1.5,0.0);
\draw[color=red] (0.54,1.27) node {{\color{red}$w_2$}};
\draw [red]  (0.4,1.4) to [out=275, in=120] (0.6,0.75) 
to [out=300, in=180] (2.3, 0.08);
		\node [below] at (0.67,0) {$\bar{r}_0$};
        \draw [dashed] (0.67,-0.03)-- (0.67,1.4) node [right] {};
        \node [below] at (0.35,0) {$\tilde r_0$};
         \draw [dashed] (0.35,-0.03)-- (0.35,1.4) node [right] {};
\end{tikzpicture}
   \caption{\small A representation of the graphs of $w_1$ (blue) and $w_2$ (red).}\label{w1-w2}
\end{figure}
This implies that the function
\begin{equation*}
    \bar u(r):= \begin{cases} w_1(r) & \text{if $r \in [0,\bar r_0]$} \, , \\
w_2(r) & \text{if $r \in \left(\bar r_0,+\infty\right)$} \, ,
\end{cases}
\end{equation*}
is at least of class $C^1([0,+\infty))$, and moreover
\begin{equation}\label{disderivate}
 \bar u''\!\left(\bar r_0^-\right) := w_1''(\bar r_0) \le w_2''(\bar r_0) = \bar u''\!\left(\bar r_0^+\right)  ,
\end{equation}
with a possible \emph{positive} jump for the second derivative. Note that the jump cannot be negative, otherwise the graph of $w_2$ would lie strictly below that of $w_1$ in a right neighborhood of $ \bar r_0 $, allowing for further intersections when $ r_0 $ is slightly larger than $ \tilde{r}_0 $, which is again a contradiction to the definition of $ \tilde{r}_0 $. Now, due to \eqref{eq:w1} and \eqref{eq:w2}, we observe that $\bar u$ satisfies \eqref{eq psi lip} with
\beq \label{psi-k}
\bar \psi(r):= 
\begin{cases}
\sinh(r) & \text{if } r \in [0,\bar r_0] \, , \\
\kappa (r-\tilde r_0)^\alpha & \text{if } r \in \left( \bar r_0 , +\infty \right)  ,
\end{cases}
\eeq 
for any $\kappa>0$. Also, $\bar u'(0) = w_1(0)= 0 $ and $\bar u(0)=w_1(0)>0$. Clearly, we can choose $\kappa$ in such a way that $\bar \psi$ is continuous in $[0,+\infty)$. Moreover, we note that \eqref{eq psi lip} can be written as follows: 
\begin{equation*}\label{barpsi'barpsi}
   \frac{\bar \psi'}{\bar \psi} = -\frac{\bar u^q+\bar u''}{(n-1) \, \bar u'} \qquad \text{in $(0,+\infty) \setminus \{\bar r_0\}$} \, ;
\end{equation*}
in particular, by virtue of \eqref{disderivate} and the fact that $ \bar u $ is $C^1$ with $\bar u'<0$, it follows that
\[
\bar \psi'\!\left(\bar r_0^-\right) \le \bar \psi'\!\left(\bar r_0^+\right) ,
\]
\emph{i.e.}\ also $ \bar \psi' $ has (at most) a positive jump at $\bar r_0$. To sum up, $ \bar \psi $ is continuous in $ [0,+\infty) $, positive in $ \left( 0 ,+\infty \right) $, $C^\infty $ in $ \left[ 0 , \bar r_0 \right) \cup \left( \bar r_0 , +\infty \right)  $, satisfies \eqref{hp 1 psi}, \eqref{hp 1 psi'}, and \eqref{hp 3 psi} (due to \eqref{psi-k}), and $\bar \psi'$ is continuous and increasing in $ \left[0,\bar r_0 \right) \cup \left(\bar r_0,+\infty\right)$ with at most a positive jump at $\bar r_0$; as a result, $\bar \psi$ is (locally) Lipschitz continuous and convex in the whole of $[0,+\infty)$, as desired.
\end{proof}

In the next lemma, which is by far the most technical part of the construction, we improve the regularity of the above model function $\bar \psi$ from Lipschitz to $C^1$, by carefully removing the singularity at $ \bar r_0 $ via a suitable modification of the solution $\bar u$. It is plain to see that if in \eqref{disderivate} the identity holds then one can directly move on to the proof of Theorem \ref{thm: ex int}, as the forthcoming Lemma  \ref{lemmaintermedio} becomes redundant. For this reason, we will implicitly assume, without loss of generality, that in \eqref{disderivate} the strict inequality holds. 

\begin{lemma} \label{lemmaintermedio}
Let $\bar r_0>0$ be as in Lemma \ref{lemmaLip}. For any $\varepsilon>0$ sufficiently small there exist a $C^1$ and convex function $\bar \psi_\varepsilon:[0,+\infty)\to [0,+\infty)$, which is positive in $ (0,+\infty) $, $C^\infty$ in $\left[0,\bar r_0\right) \cup \left(\bar r_0,\bar r_0+\varepsilon \right) \cup \left(\bar r_0+\varepsilon, +\infty \right)$, satisfies \eqref{hp 1 psi}, \eqref{hp 1 psi'} and \eqref{hp 3 psi}, and a corresponding $C^2$ function $\bar u_\varepsilon:[0,+\infty)\to (0,+\infty)$ such that
    \begin{equation}\label{equeps}
        -\bar u_\varepsilon''-(n-1) \, \frac{\bar \psi_\varepsilon'}{\bar \psi_\varepsilon} \, \bar u_\varepsilon'=\bar u_\varepsilon^q \qquad \text{in $(0,+\infty)$} \, .
    \end{equation}
with $ \bar u_\varepsilon''' $ exhibiting two jumps at $ \bar r_0 $ and $ \bar r_0 + \varepsilon $. Moreover, there exists a constant $ K >0 $ such that 
\beq\label{const-conv}
\frac{\bar \psi''_\varepsilon(r)}{\bar \psi_\varepsilon(r)} \ge \frac{K}{r^2+1} \qquad \forall r \in [0,+\infty) \setminus \left\{ \bar r_0  , \bar r_0 + \varepsilon \right\} .
\eeq
\end{lemma}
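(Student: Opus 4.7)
The strategy is to modify $\bar u$ on the transition interval $[\bar r_0, \bar r_0 + \varepsilon]$ to obtain a $C^2$ function $\bar u_\varepsilon$, and then to recover $\bar \psi_\varepsilon$ from \eqref{equeps}. The outer pieces are set as
\[
\bar u_\varepsilon := w_1 \ \text{on} \ [0, \bar r_0] \, , \qquad \bar u_\varepsilon(r) := \frac{c}{(r-r_0^\varepsilon)^{2/(q-1)}} \ \text{on} \ [\bar r_0+\varepsilon, +\infty) \, ,
\]
where $r_0^\varepsilon$ is a parameter to be chosen close to $\tilde r_0$. On $[\bar r_0, \bar r_0 + \varepsilon]$, I would prescribe $\bar u_\varepsilon''$ to be a smooth function \emph{monotonically increasing} from $A := w_1''(\bar r_0)$ to $B_\varepsilon := w_{2, r_0^\varepsilon}''(\bar r_0+\varepsilon)$, of the form
\[
\bar u_\varepsilon''(r) := A + (B_\varepsilon - A)\, \phi_\theta\!\left(\tfrac{r-\bar r_0}{\varepsilon}\right) ,
\]
for a smooth one-parameter family $\{\phi_\theta\}$ of monotone diffeomorphisms of $[0,1]$ with $\phi_\theta(0)=0$, $\phi_\theta(1)=1$; then define $\bar u_\varepsilon', \bar u_\varepsilon$ by integration from $\bar u_\varepsilon'(\bar r_0) = w_1'(\bar r_0)$ and $\bar u_\varepsilon(\bar r_0) = w_1(\bar r_0)$. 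Correspondingly, set $\bar\psi_\varepsilon := \sinh$ on $[0,\bar r_0]$, $\bar\psi_\varepsilon(r) := \kappa_\varepsilon (r-r_0^\varepsilon)^\alpha$ on $[\bar r_0+\varepsilon, +\infty)$, and on the transition interval
\[
\bar\psi_\varepsilon(r) := \sinh(\bar r_0)\, \exp\!\left(-\frac{1}{n-1}\int_{\bar r_0}^r\frac{\bar u_\varepsilon''(s) + \bar u_\varepsilon^q(s)}{\bar u_\varepsilon'(s)}\,ds\right) ,
\]
with $\kappa_\varepsilon>0$ chosen so that $\bar\psi_\varepsilon$ is continuous at $\bar r_0+\varepsilon$.

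The two free parameters $(r_0^\varepsilon, \theta)$ are to be determined so that $\bar u_\varepsilon$ and $\bar u_\varepsilon'$ match the outer right piece at $\bar r_0+\varepsilon$ (note that $\bar u_\varepsilon''$ already matches by construction). This yields a $2\times 2$ system which, at $\varepsilon=0$, is trivially solved by $r_0^0 = \tilde r_0$ for any fixed reference $\theta_0$. An implicit function theorem argument, contingent on choosing $\{\phi_\theta\}$ so that the Jacobian of the matching system is non-degenerate at the limit, yields a unique solution $(r_0^\varepsilon, \theta_\varepsilon)$ for all $\varepsilon>0$ sufficiently small. By construction $\bar u_\varepsilon \in C^2([0,+\infty))$; since $\bar u_\varepsilon, \bar u_\varepsilon', \bar u_\varepsilon''$ match the corresponding values of $w_1$ at $\bar r_0$ and of the outer right piece at $\bar r_0+\varepsilon$ (each satisfying its own ODE), the log-derivative $(\log \bar\psi_\varepsilon)' = -(\bar u_\varepsilon''+\bar u_\varepsilon^q)/((n-1)\bar u_\varepsilon')$ is continuous at both junctions, so $\bar\psi_\varepsilon \in C^1([0,+\infty))$ and is smooth on the interior of each piece. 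The jumps of $\bar u_\varepsilon'''$ at $\bar r_0$ and $\bar r_0+\varepsilon$ are automatic, since the prescribed interior profile is independent of $w_1''', w_2'''$. Finally, \eqref{hp 1 psi}, \eqref{hp 1 psi'}, and \eqref{hp 3 psi} are inherited from the unchanged outer pieces.

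For the lower bound \eqref{const-conv}: on $[0,\bar r_0]$, $\bar\psi_\varepsilon''/\bar\psi_\varepsilon = 1$; on $[\bar r_0+\varepsilon, +\infty)$, $\bar\psi_\varepsilon''/\bar\psi_\varepsilon = \alpha(\alpha-1)/(r-r_0^\varepsilon)^2$; both are bounded below by $K/(r^2+1)$ for a suitable $K>0$ independent of $\varepsilon$. On the transition interval, using the identity $\bar\psi_\varepsilon''/\bar\psi_\varepsilon = [(\log\bar\psi_\varepsilon)']' + [(\log\bar\psi_\varepsilon)']^2$ and differentiating the explicit formula for $(\log\bar\psi_\varepsilon)'$, I would obtain
\[
\frac{\bar\psi_\varepsilon''}{\bar\psi_\varepsilon} = -\frac{\bar u_\varepsilon'''}{(n-1)\,\bar u_\varepsilon'} + \bigl[(\log\bar\psi_\varepsilon)'\bigr]^2 + O(1),
\]
uniformly in $\varepsilon$. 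The first term is non-negative by the monotonicity of $\bar u_\varepsilon''$ and the sign of $\bar u_\varepsilon'<0$; the squared term is bounded below by a positive constant, since at $\bar r_0$ the log-derivative equals $\coth(\bar r_0)>0$ and remains close to it on the short interval; the $O(1)$ remainder is uniformly controlled. Combining the three cases yields \eqref{const-conv} with $K>0$ independent of $\varepsilon$. The main obstacle is twofold: verifying the non-degeneracy of the Jacobian in the implicit function theorem step (which requires an explicit choice of $\{\phi_\theta\}$ and a finite-dimensional linear-algebra computation), and checking that the positive contributions from the first two terms dominate the $O(1)$ remainder uniformly on the transition interval, which amounts to quantitative estimates on $\bar u_\varepsilon$ and its derivatives under the constructed profile.
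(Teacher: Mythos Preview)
Your route and the paper's diverge at a key structural choice. The paper introduces the substitution $F := \bar u^{-(q-1)}$, under which the outer right piece $w_2$ becomes an explicit \emph{quadratic} $F_2(r)=\tfrac{A}{2}(r-\bar r_0)^2+B(r-\bar r_0)+C$, with $A=F_2''(\bar r_0)<F_1''(\bar r_0)=:\tilde A$. The transition is then built by prescribing $F_\varepsilon'''\equiv -(\tilde A-A)/\varepsilon$ on $(\bar r_0,\bar r_0+\varepsilon)$ and integrating from the $C^2$ data of $F_1$ at $\bar r_0$; the result is automatically $C^2$, and for $r>\bar r_0+\varepsilon$ one simply keeps whatever quadratic comes out of the integration (no exact shifted power is imposed). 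Thus no matching system and no implicit function theorem are needed, and \eqref{hp 3 psi} follows from a direct expansion of $\bar\psi_\varepsilon'/\bar\psi_\varepsilon$ in terms of $F_\varepsilon$. The substitution also produces an exact formula for $(n-1)^2\,\bar\psi_\varepsilon''/\bar\psi_\varepsilon$ in terms of $F_\varepsilon,F_\varepsilon',F_\varepsilon'',F_\varepsilon'''$, in which the contribution $-(n-1)F_\varepsilon'''/F_\varepsilon'$ is of order $1/\varepsilon$ on the transition interval and forces $\bar\psi_\varepsilon''/\bar\psi_\varepsilon\ge M$ there for any prescribed $M>0$ once $\varepsilon$ is small.

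Your scheme, by insisting that the outer right piece be an exact shifted power $c/(r-r_0^\varepsilon)^{2/(q-1)}$, creates the $2\times 2$ matching system. As written, the IFT step is degenerate: at $\varepsilon=0$ the transition interval collapses and the solution set is the whole line $\{r_0=\tilde r_0\}$, so the Jacobian in the $\theta$-direction vanishes; making this rigorous would require at least the rescaling $r_0=\tilde r_0+\varepsilon\rho$ and an expansion to second order in $\varepsilon$, which you have not carried out. Your convexity argument on the transition interval has a second gap: you only use $-\bar u_\varepsilon'''/((n-1)\bar u_\varepsilon')\ge 0$ together with a lower bound on $[(\log\bar\psi_\varepsilon)']^2$, but the $O(1)$ remainder $-\tfrac{q\bar u_\varepsilon^{q-1}}{n-1}+\tfrac{(\bar u_\varepsilon''+\bar u_\varepsilon^q)\bar u_\varepsilon''}{(n-1)(\bar u_\varepsilon')^2}$ can be negative and there is no reason the squared term alone dominates it (indeed, at $\bar r_0^-$ their sum equals $1+w_1'''(\bar r_0)/((n-1)w_1'(\bar r_0))$, whose sign you do not control). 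The correct observation---which is implicit in your construction since $\phi_\theta$ is a diffeomorphism of $[0,1]$, hence $\phi_\theta'\ge\delta>0$---is that $\bar u_\varepsilon'''\gtrsim (B_\varepsilon-A)/\varepsilon$ on the whole transition interval, so the first term is not merely nonnegative but \emph{large}; this is what actually carries the bound, and it is exactly the mechanism the paper exploits in the $F$-formulation. Both gaps are repairable along these lines, but the $F$-substitution bypasses them entirely.
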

\begin{proof}
Let $w_1$, $w_2$, and $\bar u$ be defined as in the proof of Lemma \ref{lemmaLip}. Before continuing, it is convenient to introduce the following change of variables:
 \[
     w_1 =: F_1^{-\frac{1}{q-1}} \qquad \text{and} \qquad w_2 =: F_2^{-\frac{1}{q-1}} \, .
 \]
Note that, given the explicit expression \eqref{w_2} of $w_2$, the function $ F_2 $ is purely quadratic: 
 \begin{equation}\label{F2}
    F_2(r):= \left[\frac{2}{q-1}\left(\alpha(n-1)-\frac{q+1}{q-1}\right)\right]^{-1} \left(r-\tilde r_0 \right)^2
    \qquad \forall r>\tilde r_0 \, ,
 \end{equation}
a property that we will exploit shortly. Thus, by defining 
 \[
     F(r):= \begin{cases} F_1(r) & \text{if $r \in [0,\bar r_0]$} \, , \\
F_2(r) & \text{if $r \in (\bar r_0,+\infty)$} \, ,
\end{cases}
 \]
it holds that
\begin{equation*}\label{ubarF}
    \bar u = F^{-\frac{1}{q-1}} \, .
\end{equation*}
In particular, due to \eqref{F2} and the fact that $\bar r_0 > \tilde r_0$, we may rewrite $F_2$ as 
\begin{equation}\label{w2alternativa}
    F_2(r)=\frac{A}{2}\,(r-\bar r_0)^2 + B\,(r-\bar r_0) +C  \qquad \forall r>\tilde r_0 \, ,
\end{equation}
for suitable positive constants $A \equiv A\!\left(\alpha,n,q\right)$, $B \equiv B \!\left(\alpha,n,q,\tilde r_0, \bar r_0 \right)$, and $ C \equiv C\!\left(\alpha,n,q,\tilde r_0, \bar r_0 \right)$. Using \eqref{w2alternativa} and the $C^1$ regularity of $\bar u$ (which is therefore inherited by $F$), we necessarily have that 
\begin{equation}\label{id-F12}
    C= F_2(\bar r_0)= F_1(\bar r_0) \qquad \text{and} \qquad B= F'_2(\bar r_0)= F'_1(\bar r_0) \, . 
\end{equation}
Moreover, we observe that \eqref{disderivate} (assumed to hold with strict inequality) and \eqref{id-F12} entail
\begin{equation}\label{disderivateequiv}
     A= F_2''(\bar r_0) < F_1''(\bar r_0)=: \tilde A \, ,
\end{equation}
hence $F'''$ exhibits a negative Dirac delta at $\bar r_0$. 

Next, we proceed by modifying $F_2$ in $ (\bar r_0 , +\infty ) $ in a proper way, without touching $ F_1 $ in $ [0,\bar r_0] $. Namely, for any $\varepsilon>0$, let us introduce the function $F_{2,\varepsilon}$ defined as 
\begin{equation*}\label{F2eps}
 F_{2,\varepsilon}(r):= \frac{\tilde A}{2}\,(r-\bar r_0)^2 +B\,(r-\bar r_0) +C -\frac{\tilde A-A}{\varepsilon} \int_{\bar r_0}^r\int_{\bar r_0}^t\int_{\bar r_0}^s \chi_{[\bar r_0,\bar r_0+\varepsilon]} (y) \,dy \,ds \,dt \qquad  \forall r> \bar r_0 \, .
\end{equation*}
From the above definition, we notice that 
\begin{equation}\label{derivataterza}
    F_{2,\varepsilon}'''(r)= -\frac{\tilde A -A}{\varepsilon}\,\chi_{[\bar r_0,\bar r_0+\varepsilon]}(r) \qquad  \forall r> \bar r_0 \, ;
\end{equation}
also, recalling \eqref{id-F12} and \eqref{disderivateequiv}, we have that 
\begin{equation}\label{siraccordaC2conF1}
F_{2,\varepsilon}(\bar r_0)= F_1(\bar r_0) \, , \qquad  F_{2,\varepsilon}'(\bar r_0)= F_1'(\bar r_0) \, , \qquad F_{2,\varepsilon}''(\bar r_0)= F_1''(\bar r_0) \, ,
\end{equation}
so that $ F_{2,\varepsilon} $ is joined to $ F_1 $ with a $C^2$ connection at $\bar r_0$. Even more so, by direct computations we can obtain the explicit expression of $F_{2,\varepsilon}$: 
\begin{equation}\label{F2esplit}
     F_{2,\varepsilon}(r)= \begin{cases} \frac{\tilde A}{2}\,(r-\bar r_0)^2 +B\,(r-\bar r_0) +C -\frac{\tilde A-A}{\varepsilon} \cdot \frac{(r-\bar r_0)^3}{6} & \text{if $r \in (\bar r_0, \bar r_0+\varepsilon]$} \, , \\
\frac{A}{2}\,(r-\bar r_0)^2 + \left(B+\frac{\tilde A -A}{2} \, \varepsilon\right) (r-\bar r_0) +C-\frac{\tilde A-A}{6} \, \varepsilon^2 & \text{if $r \in (\bar r_0+\varepsilon,+\infty)$} \, .
\end{cases}
\end{equation}
As a result, it is evident that $F_{2,\varepsilon}$ is of class $C^2$ in the whole of $(\bar r_0,+\infty)$ and thus, also taking into account \eqref{derivataterza} and \eqref{siraccordaC2conF1}, it turns out that the function 
\begin{equation}\label{Fe}
F_\varepsilon(r):=\begin{cases} F_1(r) & \text{if $r \in [0,\bar r_0]$} \, , \\
F_{2,\varepsilon}(r) & \text{if $r \in (\bar r_0,+\infty)$} \, ,
\end{cases}
\end{equation}
is of class $C^2([0,+\infty))$, with its third derivative having two jumps at $\bar r_0$ and $\bar r_0+\varepsilon$. Furthermore, owing to \eqref{F2esplit}, it is not difficult to check that $ F_{2,\varepsilon} $ is always everywhere positive.

At this point, we can introduce 
\beq \label{uFeps}
\bar u_\varepsilon 
:= 
F_\varepsilon^{-\frac{1}{q-1}} 
\eeq
to obtain a positive $C^2$ function, complying with $ \bar u_\varepsilon(0)>0 $ and $ \bar u_\varepsilon'(0)=0 $, that solves \eqref{equeps} for a model function $\bar\psi_\varepsilon$ implicitly defined by 
\begin{equation}\label{psieps}
    \frac{\bar \psi_\varepsilon'}{\bar \psi_\varepsilon}= -\frac{\bar u_\varepsilon^q+ \bar u_\varepsilon''}{(n-1) \, \bar u_\varepsilon'}= \frac{1}{n-1}\left(\frac{q-1}{F_\varepsilon'}+ \frac{q}{q-1}\cdot\frac{F_\varepsilon'}{F_\varepsilon}-\frac{F_\varepsilon''}{F'_\varepsilon}\right),
\end{equation}
and therefore of class $C^1$ (note that the denominators on the rightmost side never vanish thanks to \eqref{F2esplit} and the fact that $ F_\varepsilon = F_1  $ in $ [0,\bar r_0] $). Since $ \bar u_\varepsilon = w_1 $ in $ [0,\bar r_0] $, integrating from $ r=0 $ to an arbitrary $ r>0 $, it is plain that there exists a unique model function satisfying \eqref{psieps}, which is actually $ C^\infty $ in $ [0, \bar r_0) \cup (\bar r_0 , \bar r_0 + \varepsilon) \cup ( \bar r_0 + \varepsilon , +\infty ) $ and fulfills \eqref{hp 1 psi} and \eqref{hp 1 psi'}. In order to complete the proof, it remains to show that $\bar \psi_\varepsilon$ is convex in the whole $(0,+\infty)$ and complies with the desired growth estimate \eqref{hp 3 psi}. To this aim, we will need to pick $\varepsilon>0$ small enough.

To begin with, let us focus our attention on the interval $(\bar r_0,+\infty)$.
According to \eqref{F2esplit} and \eqref{Fe}, we can assert that there exists $\bar \varepsilon>0 $ such that 
\begin{equation}\label{beh}
  F_\varepsilon(r) = \frac{A\,r^2}{2} + O(r) \, , \quad 
F'_{\varepsilon}(r) = A\,r+O(1) \, , \quad  F''_{\varepsilon}(r) = A \qquad \text{as $r\to +\infty$, uniformly in $\eps \in (0,\bar \eps]$} \, . 
\end{equation} 
We point out that, when writing $g_\eps(r)=O(r^m)$ as $r \to +\infty$ (for some $ m \in \mathbb Z $), uniformly in $\eps \in (0,\bar \eps]$, we mean that there exist $ \mathsf{C}, R>0$ (independent of $\varepsilon$) such that
\[
\sup_{r \ge R} \, \sup_{\eps \in (0,\bar \eps]} \left|\frac{g_\eps(r)}{r^m}\right|  \le  \mathsf{C} \, .
\]
Therefore, \eqref{psieps} yields 
\begin{equation*}
\frac{\bar\psi_\varepsilon'(r)}{\bar\psi_\varepsilon(r)}= \frac{1}{n-1}\left(\frac{q-1}{A}+\frac{q+1}{q-1}\right)\frac{1}{r} +O\!\left(\frac{1}{r^2}\right) \qquad \text{as $r\to +\infty$, uniformly in $\varepsilon \in (0, \bar \varepsilon]$} \, .
\end{equation*}
Remarkably, this is equivalent to  
\begin{equation}\label{psi' su psi 8 ott}
\frac{\bar\psi_\varepsilon'(r)}{\bar\psi_\varepsilon(r)} = \frac{\alpha}{r} +O\!\left(\frac{1}{r^2}\right) \qquad \text{as $r\to +\infty$, uniformly in $\varepsilon \in (0,\bar \varepsilon ]$} \, , 
\end{equation}
since, thanks to \eqref{F2} and \eqref{disderivateequiv}, we have that
\begin{equation}\label{A}
    A= F''_2(\bar r_0) =\frac{\left(q-1\right)^2}{\alpha(n-1)\left(q-1\right)-\left(q+1\right)} \, .
\end{equation}
Upon integrating \eqref{psi' su psi 8 ott} from $ R \gg \bar r_0 $ to an arbitrary $ r>R $, we readily deduce that \eqref{hp 3 psi} holds (for some $\kappa>0$ possibly depending on $\varepsilon$). 

With a view to establishing convexity, we observe that by combining \eqref{psieps} and the identity
\begin{equation*}
    \frac{\bar \psi_\varepsilon''}{\bar \psi_\varepsilon} = \left(\frac{\bar\psi_\varepsilon'}{\bar\psi_\varepsilon}\right)' + \left(\frac{\bar\psi_\varepsilon'}{\bar\psi_\varepsilon}\right)^2 
\end{equation*}
we arrive at 
\begin{equation}\label{interminidiF}
\begin{aligned}
  (n-1)^2  \, \frac{\bar \psi_\varepsilon''}{\bar \psi_\varepsilon} = & \, \frac{\left(q-1\right)^2+n \left(F_\varepsilon''\right)^2-(n+1)\left(q-1\right)F_\varepsilon''-(n-1) \, F_\varepsilon' \, F_\varepsilon'''}{\left(F_\varepsilon'\right)^2} \\
  & + \frac{2q \, F_\varepsilon+\frac{q(n-3)}{q-1} \, F_\varepsilon \, F_\varepsilon''+\frac{q}{q-1}\left(\frac{q}{q-1}-(n-1)\right)\left(F_\varepsilon'\right)^2}{F_\varepsilon^2} \, ,
  \end{aligned}
\end{equation}
for every $r\in (0,\bar r_0)\cup (\bar r_0,\bar r_0+\varepsilon)\cup (\bar r_0+\varepsilon,+\infty)$. Next, we exploit the uniform asymptotic expansions  \eqref{beh} in \eqref{interminidiF}, along with the fact that $ F'''_\varepsilon = 0 $ in $ (\bar r_0 + \varepsilon, +\infty) $, to deduce that 
\begin{multline*}
    (n-1)^2  \, \frac{\bar \psi_\varepsilon''(r)}{\bar \psi_\varepsilon(r)} = \left[ \frac{\left(q-1\right)^2 + n A^2 - (n+1)\left(q-1\right)A}{A^2} \right. \\ \left.+ \frac{q A+ \frac{q(n-3)}{2\left(q-1\right)} \, A^2+ \frac{q}{q-1}\left(\frac{q}{q-1}-(n-1)\right)A^2}{\frac{A^2}{4}}\right] \frac1{r^2}+O\!\left(\frac1{r^3}\right)
\end{multline*}
as $r \to +\infty$, uniformly in $\eps \in (0,\bar \eps]$. The coefficient in front of $1/r^2$ on the right-hand side can be rewritten by using \eqref{A} as
\[
\begin{aligned}
&\frac{\left[\alpha (n-1)\left(q-1\right)-\left(q+1\right)\right]^2}{\left(q-1\right)^2}  + n -\frac{n+1}{q-1} \left[\alpha (n-1)\left(q-1\right)-\left(q+1\right) \right] \\ 
& \quad \qquad \qquad + \frac{4q}{\left(q-1\right)^2} \left[\alpha(n-1)\left(q-1\right)-\left(q+1\right)\right]+\frac{2q}{q-1}\,(n-3)+\frac{4q}{q-1}\left(\frac{q}{q-1}-(n-1)\right) \\
& \qquad \qquad =\frac{1}{(q-1)^2} \Big[\alpha^2 (n-1)^2 \left(q-1\right)^2 +\alpha(n-1)\left(q-1\right) \left( 4q-(n+1)\left(q-1\right)-2\left(q+1\right)\right)\\
& \qquad \qquad  \hphantom{ =\frac{1}{\left(q-1\right)^2} \Big[} \ + \left(q+1\right)^2 -\left(q+1\right) \left(4q-(n+1)\left(q-1\right)\right) + 4q^2 -2q(n+1)\left(q-1\right)+n\left(q-1\right)^2\Big] \\
& \qquad \qquad = (n-1)^2 \, \alpha(\alpha-1) \, , 
\end{aligned}
\]
whence
\[
\frac{\bar \psi_\varepsilon''(r)}{\bar \psi_\varepsilon(r)} = \frac{\alpha(\alpha-1)}{r^2} + O\!\left(\frac{1}{r^3}\right) \qquad \text{as $r\to +\infty$, uniformly in $\varepsilon \in (0,\bar \varepsilon ]$} \, .
\]
In particular, this ensures the existence of some $ R \gg \bar r_0$, independent of $ \eps \in (0,\bar \eps] $, such that
\beq\label{conv inf}
\frac{\bar \psi_\varepsilon''(r)}{\bar \psi_\varepsilon(r)} \ge \frac{\alpha(\alpha-1)}{2r^2} \qquad \forall r >R \, , \ \forall \eps  \in (0,\bar \eps] \, ,
\eeq
which yields the convexity of $\bar \psi_\eps$ in $(R,+\infty)$. 

Regarding the interval $(\bar r_0+\eps,R]$, here we can exploit the estimates
\[
\sup_{r \in (\bar r_0+\eps,R]} \left|F_{\eps}(r)-F_2(r) \right| \le  \mathsf{C} \eps \, , \qquad \sup_{r \in (\bar r_0+\eps,R]} \left|F_{\eps}'(r)-F_2'(r)\right| \le  \mathsf{C} \eps \, ,
\]
for some $ \mathsf{C}>0$ independent of $\eps \in (0,\bar \eps]$, which can be easily deduced from \eqref{F2esplit}. Moreover, we have that $F_{\eps}''=F_2''$ and $F_{\eps}'''=0$ therein. As a consequence, by \eqref{interminidiF} there exists another constant $ \mathsf{C}>0 $ (that we do not relabel), independent of $\eps \in (0,\bar \eps] $,  such that
\[
\sup_{r \in (\bar r_0+\eps,R]}\left| \frac{\bar \psi_\eps''(r)}{\bar \psi_\eps(r)} - \frac{\bar \psi''(r)}{\bar \psi(r)} \right| \le \mathsf{C} \eps \, ,
\]
which implies that
\beq\label{conv inf 2}
\frac{\bar\psi_\eps''(r)}{\bar\psi_\eps(r)} \ge \frac{\bar\psi''(r)}{\bar\psi(r)}-\mathsf{C}\eps =
\frac{\alpha(\alpha-1)}{\left( r-\tilde r_0 \right)^2} -\mathsf{C}\eps \ge \frac{\alpha(\alpha-1)}{R^2} -\mathsf{C}\eps >0 \qquad \forall r \in  (\bar r_0+\eps,R] \, ,
\eeq 
provided $ \eps>0 $ is small enough depending on $R$ as well (which is consistent since $ R $ only depended on the previous upper bound $ \bar \eps $). 

Hence, so far we have shown that for $\eps>0$ sufficiently small, the function $\bar \psi_\eps$ is convex in $(\bar r_0+\eps,+\infty)$; more precisely, it satisfies \eqref{conv inf} and \eqref{conv inf 2}. Furthermore, it is clearly convex in $(0,\bar r_0)$ as well, since it coincides with the original function $\bar \psi$ there, which is precisely the hyperbolic sine (recall \eqref{psi-k}), so that 
\beq\label{conv inf 2 bis}
\frac{\bar\psi_\eps''(r)}{\bar\psi_\eps(r)}  = 1 \qquad \forall r \in (0,\bar r_0) \, .
\eeq

It remains to determine what happens in the intermediate interval $(\bar r_0, \bar r_0+\eps)$. To this end, we preliminarily notice that 
\[
\sup_{r \in (\bar r_0,\bar r_0+\eps)} \left|F_{\eps}(r)-F_2(r)\right| \le \bar{\mathsf{C}} \eps^2 \, , \qquad \sup_{r \in (\bar r_0, \bar r_0+\eps)} \left|F_{\eps}'(r)-F_2'(r)\right| \le \bar{\mathsf{C}} \eps \, ,
\]
for some $ \bar{\mathsf{C}}>0$ independent of $\eps \in (0,\bar \eps]$, which can still be deduced from \eqref{F2esplit}. Moreover, by construction we have that
$$
F''_{\eps}(r) \in \big(A, \tilde A\big) \quad \text{and} \quad F_{\eps}'''(r) = - \frac{\tilde A-A}{\eps}  \qquad \forall r \in (\bar r_0 , \bar r_0 + \eps) \, .
$$
In particular, in such an interval $ -F_\varepsilon''' $ diverges uniformly to $ +\infty $ as $ \eps \to 0^+ $, $F'_\eps $ stays positively bounded away from zero, and all of the other quantities stay bounded. Therefore, owing to formula \eqref{interminidiF}, we see that for any $M>0$ there exists a small enough $\eps'>0$ such that 
\beq\label{conv inf 3}
\inf_{r \in (\bar r_0,\bar r_0+\eps)} \frac{\bar\psi_\eps''(r)}{\bar\psi_\eps(r)} \ge M \qquad \forall \eps \in \left(0,\eps'\right)  .
\eeq
To conclude, thanks to \eqref{conv inf}, \eqref{conv inf 2}, \eqref{conv inf 2 bis}, and \eqref{conv inf 3}, we observe that for every small enough $\eps>0$ the function $\bar \psi_\eps$, which is $C^1$ and piecewise $ C^\infty $, complies with estimate \eqref{const-conv} for a suitable constant $ K>0 $.
In particular, it is globally convex, which completes the proof.
\end{proof}

We are finally in the position to prove the main result of the present subsection, namely Theorem \ref{thm: ex int}. In order to do that, we take advantage of the existence-type result in Lemma  \ref{lemmaintermedio}, along with a further technical correction aimed at removing the discontinuity of the second derivative of $ \bar \psi_\varepsilon $. 

\begin{proof}[Proof of Theorem \ref{thm: ex int}] 
For the sake of notational simplicity, let us set $ \bar \psi \equiv \bar \psi_\eps $ and $ F \equiv F_\varepsilon $, where $ \bar \psi_\eps $ is the $ C^1 $ and piecewise $ C^\infty $ convex model function provided by Lemma \ref{lemmaintermedio}, and $ F_\varepsilon $ is related to the corresponding solution $ \bar u_\eps $ of equation \eqref{equeps} by formula \eqref{uFeps}. Note that $ F''' $ exhibits two jumps: our goal is to remove both of them by smoothening $F$ via an approximation that preserves the convexity of $\bar \psi$. However, it is  not restrictive to assume that $F'''$ has one jump only (which we will do), say at some point $r_1>0$, as the proposed approximation procedure can be carried out in two subsequent steps. 


To begin with, let us introduce the notation 
    \begin{equation*}
    \begin{aligned}
        G\!\left(F,F',F'',F'''\right):=\, &  \frac{\left(q-1\right)^2+n \left(F''\right)^2-(n+1)\left(q-1\right)F''-(n-1) \, F' \, F'''}{\left(F'\right)^2} \\
  & + \frac{2q \, F+\frac{q(n-3)}{q-1} \, F \, F''+\frac{q}{q-1}\left(\frac{q}{q-1}-(n-1)\right)\left(F'\right)^2}{F^2} \, .
  \end{aligned}
    \end{equation*}
    We can immediately observe that $G$ continuously depends on all of its arguments (where $ F,F'>0 $); in particular, it is \emph{linear} with respect to the last one. Owing to the fact that $ \psi''(r)/\psi(r) > K/\left( r^2+1 \right) $ in $ (0,r_1) \cup (r_1,+\infty) $, recalling \eqref{interminidiF} we deduce that  
    \[
     G\!\left(F(r_1),\,F'(r_1),\,F''(r_1),\,F'''\!\left( r_1^{\pm} \right)\right) >0 \, ,
    \]
    since $ F $ is $C^2\!\left( \left[ 0 , +\infty \right) \right) \cap C^\infty\!\left( \left[ 0 , r_1 \right) \cup \left( r_1 , +\infty \right)  \right)  $ and $ F''' $ has a jump at $ r_1 $. Thanks to the just observed continuity of $G$ and $ F,F'>0 $, we can assert that there exist $\varepsilon , \delta >0 $ such that 
       \[
     G\!\left(F(r_1)+t_0,\,F'(r_1)+t_1,\,F''(r_1)+t_2,\,F'''\!\left( r_1^{\pm} \right) + t_3 \right) > \varepsilon \qquad \forall t_0,t_1,t_2,t_3 \in (-\delta,\delta) \, .
    \]
Now, because the coefficient in front of $F'''$, in the expression of $G$, is given by $-(n-1)/F'(r_1)<0$, and we know that $G$ is linear in $ F''' $, it actually follows that
    \begin{equation}\label{G}
    \begin{gathered}
       G\!\left(F(r_1)+t_1, \, F'(r_1)+t_2, \,F''(r_1)+t_3, \,S\right)>\varepsilon \qquad \forall t_0,t_1,t_2 \in (-\delta,\delta) \, , \ \forall S < \mathsf{M}+\delta \, , \\ 
       \text{where } \mathsf{M} := \max\!\left\{ F'''\!\left( r_1^{+} \right) , \, F'''\!\left( r_1^{-} \right)  \right\} . 
       \end{gathered}
    \end{equation} 
    Given the properties of $F$, we are in a position to apply a suitable regularization $ \left\{ F_k \right\} $ of smooth and positive functions such that 
    \begin{equation}\label{G1}
        F_k \to F \, , \quad F_k' \to F'\, , \quad F''_k \to F'' \qquad\text{as $k\to \infty$} \, , \ \text{uniformly in } [0,+\infty) \, ,
    \end{equation}
and 
    \begin{equation}\label{G2}
        F_k''' \to F''' \qquad \text{as $k \to \infty$} \, , \ \text{uniformly in $[0,r_1-\eta] \cup [r_1+\eta,+\infty)$ for every $\eta \in (0,r_1)$} \, ;
    \end{equation}
    moreover, we can make sure that $ \eta $ is so small that 
    \beq \label{G3}
    F_k'''(r) < \mathsf{M} + \delta \qquad \forall r \in (r_1-\eta , r_1 + \eta)
    \eeq
    for all $ k \in \mathbb{N} $ large enough, since the jump discontinuity of $ F''' $ at $r_1$ guarantees that $ F''' $ strictly less than $ \mathsf{M}+\delta $ in a suitable neighborhood of $r_1$. Note that such a construction can be achieved, for instance, by combining a local cutoff argument (near $r_1$) and a mollification; in particular, it is possible to leave the original function $F$ unmodified away from $ r_1 $.   
    
    Putting together \eqref{G1} and \eqref{G2} with the fact that $G\left(F, F', F'', F'''\right)>0$ in $ (0,r_1)\cup (r_1,+\infty)$, we thus have that  
    \begin{equation}\label{G-conv-1}
        G\!\left(F_k(r), \, F_k'(r), \, F_k''(r), \,F_k'''(r)\right)>0 \qquad \forall r\in(0,r_1-\eta] \cup [r_1+\eta,+\infty) \, ,
    \end{equation}
    for every $\eta \in (0 , r_1) $ and for all $k$ sufficiently large (depending on $\eta$). In order to conclude the proof, we need to also cover the intermediate region $(r_1-\eta, r_1+\eta)$. To this end, we claim that 
    \begin{equation}\label{G-conv-2}
    G\!\left(F_k(r), \, F_k'(r), \, F_k''(r), \,F_k'''(r)\right)>0 \qquad  \forall r \in (r_1-\eta, r_1+\eta) \, ,
    \end{equation}
for any small enough $\eta>0$ and for all $k$ sufficiently large (depending on $\eta$). To show the claim, we first observe that by virtue of \eqref{G1} and $ F \in C^2 $ we can always pick $ \eta $ so small that
    \begin{equation*}
        F_k(r)-F(r_1) \, , \  F'_k(r)-F'(r_1) \, , \  F''_k(r)-F''(r_1)\in (-\delta,\delta) \qquad \forall r \in (r_1-\eta, r_1+\eta) \, .
    \end{equation*}
On the other hand, for a possibly smaller $\eta$ and a larger $k$, inequality \eqref{G3} holds. At this point, the claim readily follows from \eqref{G}. 

Now, as in \eqref{uFeps}, we set $ u := F_k^{-1/(q-1)} $ and reconstruct the model function $\psi$ upon integrating the analogue of \eqref{psieps} with $ \bar u_\varepsilon $ replaced by $u$ and $ F_\eps $ by $F_k$. In this way, we obtain a pair $ (u,\psi) $ satisfying 
$$
- u'' - (n-1) \, \frac{\psi'}{\psi} \, u' = u^q \qquad \text{in $(0,+\infty)$} \, ,
$$
where $ \psi \in C^\infty([0,+\infty)) $ is convex thanks to \eqref{G-conv-1} and \eqref{G-conv-2}; moreover, it still complies with \eqref{hp 1 psi}, \eqref{hp 1 psi'}, and \eqref{hp 3 psi} (recall that $ F_\varepsilon $ has not been modified outside of a small neighborhood of $r_1$). Finally, since $ q<2^*_\alpha-1 $ and we know that (up to constants)
$$
\psi (r) \sim r^{\alpha} \, , \quad u(r) \sim r^{-{2}/{(q-1)}} \, , \quad  u'(r) \sim r^{-{(q+1)}/{(q-1)}}  \qquad \text{as }  r \to +\infty \, ,
$$
by means of a standard cutoff argument it is not difficult to check that $ u $ also belongs to the energy space $ D^{1,2}_{\mathrm{rad}}\!\left( \Mb^n \right) $.
\end{proof}

\subsection{Non-uniqueness for the Dirichlet problem in geodesics balls}\label{sub:nonuniq}

\begin{proof}[Proof of Theorem \ref{non-uniq-loc}]
    We argue by contradiction, assuming that \eqref{loc-dir} admits a unique radial solution $u_R$ for all $R>0$. Taking it for granted from now on, we will proceed by stating and proving a number of claims.  

\medskip
\noindent \textbf{Claim 1:} \it The embedding of $ D^{1,2}_{\mathrm{rad}}\!\left( \Mb^n \right) $ into $ L^p\!\left( \Mb^n \right) $ fails for all $ p < 2^*_\alpha  $. \rm 

\smallskip

\noindent Thanks to assumption \eqref{hp 00 psi}, this is a direct consequence of Proposition \ref{KO}.


\medskip
\noindent \textbf{Claim 2:} \it For every $R>0$, the radial solution $u_R$ to \eqref{loc-dir} is the unique positive Sobolev minimizer of the quotient 
\beq \label{def-IR}
I_R:=\inf_{f \in H^1_{0,\mathrm{rad}}(B_R) \setminus \{ 0 \} } \frac{\left(\int_0^{R} \left| f' \right|^2 \psi^{n-1} \, dr \right)^{\frac 12}}{\left(\int_0^{R} |f|^{q+1} \, \psi^{n-1} \, dr \right)^{\frac{1}{q+1}}}
\eeq
such that
\beq\label{MR to IR}
\int_0^R u_R^{q+1} \, \psi^{n-1} \, dr  =: M_R = I_R^{2\frac{q+1}{q-1}} \, .
\eeq
\rm 
Since the embedding of $ H^1_0\!\left( B_R \right) $ into $ L^p\!\left(B_R\right) $ is compact for all $ p<2^* $, the existence of a positive minimizer for $I_R$ follows via the direct method of the calculus of variations, similarly to the proof of Theorem \ref{da capire}. Indeed, to this end, one needs to fix the ``mass'' 
\beq \label{sobo-mass}
\int_0^{R} |f|^{q+1} \, \psi^{n-1} \, dr = M>0
\eeq
and minimize the squared gradient norm in \eqref{def-IR} subject to \eqref{sobo-mass}, obtaining, by compactness, a certain function $ v \in H^{1}_{0,\mathrm{rad}}\!\left( B_R \right)  $ such that
\beq \label{sobo-mass-2}
\int_0^{R} |v|^{q+1} \, \psi^{n-1} \, dr = M \qquad \text{and} \qquad \int_0^{R} \left( v' \right)^2 \psi^{n-1} \, dr = I_R^2 \, M^{\frac{2}{q+1}} \, .
\eeq
Due to the structure of the functional in \eqref{def-IR}, it is plain that $ v $ can be taken nonnegative without loss of generality; moreover, the first-order optimality conditions entail that $v$ is a radial solution of  
\begin{equation}\label{loc-dir-lambda}
\begin{cases}
-\Delta v = \lambda \, v^q \, , \quad v>0  & \text{in } B_R \, , \\ 
v = 0  & \text{on } \partial B_R \, ,
\end{cases}
\end{equation}
for some $ \lambda>0 $, so that, in particular, it is $C^1 $ in $ \overline{B}_R $ and positive in $ B_R $. In fact, multiplying the differential equation in \eqref{loc-dir-lambda} by $v$ and integrating by parts, recalling \eqref{sobo-mass-2}, we find the relation 
$$
 I_R^2 \, M^{\frac{2}{q+1}} = \lambda \, M  \qquad \iff \qquad M = \lambda^{-\frac{q+1}{q-1}} \, I_R^{2 \frac{q+1}{q-1}} \, .
$$
Because $M>0$ is arbitrarily fixed, choosing it precisely as 
$$
M=M_R:=I_R^{2 \frac{q+1}{q-1}} 
$$
ensures that $ \lambda=1 $, that is, the Sobolev minimizer $v$ with mass $ M_R $ is a radial solution to \eqref{loc-dir}. On the other hand, we are assuming that \eqref{loc-dir} admits a unique radial solution, whence $ v=u_R $. Moreover, since \emph{any} positive Sobolev minimizer with mass $M_R$ satisfies  \eqref{loc-dir}, the thesis follows.

\medskip
\noindent \textbf{Claim 3:} \it The function $R \mapsto I_R$ is nonincreasing, with
\beq\label{lim-IR}
\lim_{R \to 0} I_R = +\infty \qquad \text{and} \qquad \lim_{R \to +\infty} I_R = 0 \, . 
\eeq
\rm 
The monotonicity follows directly from the definition. From Claim 1 we know that
$$
I_\infty:=\inf_{ f \in D^{1,2}_{\mathrm{rad}}(\Mb^n) \setminus \{ 0 \} } \frac{\left(\int_0^{+\infty} \left| f' \right|^2 \psi^{n-1} \, dr \right)^{\frac 12}}{\left(\int_0^{+\infty} |f|^{q+1} \, \psi^{n-1} \, dr \right)^{\frac{1}{q+1}}} = 0 \, .
$$
Thus, by the definitions of $I_R$ and $I_\infty$, it is easy to deduce that 
$$
\lim_{R \to +\infty} I_R = I_\infty \, ,
$$
whence the right identity in \eqref{lim-IR}. Next, assume by contradiction that $I_R$ has a finite limit $ \mathsf{I}>0 $ as $ R \to 0 $. Then, recalling \eqref{MR to IR}, the sequence $ \left\{ u_{1/k} \right\} $ (extended to zero outside $B_{1/k}$) would be bounded in $ H^1_0(B_1) $ with an $ L^{q+1}(B_1) $ norm convergent to some positive constant; however, this is incompatible with the compactness of the embedding $ H^1_0(B_1) \hookrightarrow L^{q+1}(B_1) $, as the a.e.~pointwise limit is $0$. Therefore, the left identity in \eqref{lim-IR} holds too.

\medskip
\noindent \textbf{Claim 4:} \it The function
\beq \label{def-AR}
A(R) := u_R(0) \qquad \forall R>0
\eeq 
is continuous. \rm 

\smallskip
\noindent First of all, we observe that the function $ R \mapsto I_R $ is continuous. Indeed, left continuity is a simple consequence of the fact that $I_R$ can equivalently be written as 
$$
I_R = \inf_{S \in (0,R)} \, \inf_{ f \in H^{1}_{0,\mathrm{rad}}(B_S) \setminus \{0\} } \frac{\left(\int_0^{S} \left| f' \right|^2 \psi^{n-1} \, dr \right)^{\frac 12}}{\left(\int_0^{S} |f|^{q+1} \, \psi^{n-1} \, dr \right)^{\frac{1}{q+1}}} \, .
$$
With regards to right continuity, take any decreasing sequence $ R_k \searrow R $, and denote by $ \{ v_k \} \subset H^1_{0,\mathrm{rad}}\!\left(B_{R_k}\right) $ the corresponding sequence of (positive) Sobolev minimizers of fixed mass $M$ (recall \eqref{sobo-mass}). Then
\beq \label{Ik-def}
I_{R_k} = \frac{\left(\int_0^{R_k} \left| v_k' \right|^2 \psi^{n-1} \, dr \right)^{\frac 12}}{\left(\int_0^{R_k} v_k^{q+1} \, \psi^{n-1} \, dr \right)^{\frac{1}{q+1}}} = \frac{\left(\int_0^{R_k} \left| v_k' \right|^2 \psi^{n-1} \, dr \right)^{\frac 12}}{M^{\frac{1}{q+1}}} \qquad \forall k \in \mathbb{N} \, ,
\eeq 
so that $ \{ v_k \} $ is bounded, say, in $ H^1_0(B_{R+1}) $ since $ I_{R_k} \le I_R $. By compactness, $ \{ v_k \} $ converges (up to subsequences) weakly in $ H^1_0(B_{R+1}) $ and thus strongly in $ L^{q+1}(B_{R+1}) $ to some $ \overline{v} $ which, by construction, actually belongs to $ H^1_{0,\mathrm{rad}}\!\left( B_R \right) $. Hence, taking the limit of \eqref{Ik-def} as $k \to \infty$, we end up with 
$$
I_R \ge  \limsup_{k \to \infty} I_{R_k} \ge  \frac{\left(\int_0^{R} \left| \overline{v}' \right|^2 \psi^{n-1} \, dr \right)^{\frac 12}}{\left(\int_0^{R} \overline{v}^{q+1} \, \psi^{n-1} \, dr \right)^{\frac{1}{q+1}}} \ge I_R \, , 
$$
which yields the thesis. Note that, \emph{a posteriori}, convergence is also strong in $ H^1_0(B_{R+1}) $. Now, let $ R \in (0,+\infty) $, take any sequence $ R_k \to R $, and denote by $ u_k \equiv u_{R_k} $ the corresponding sequence of radial solutions to \eqref{loc-dir} which, as ensured by Claim 2, are also Sobolev minimizers of \eqref{def-IR} (with $ R \equiv R_k $) satisfying
\beq\label{MR to IR - k}
\int_0^{R_k} u_k^{q+1} \, \psi^{n-1} \, dr  = I_{R_k}^{2\frac{q+1}{q-1}} \qquad \forall k \in \mathbb{N} \, .
\eeq
Taking advantage of the same compactness argument exploited above, along with the continuity of $ R \mapsto I_R $, we have that $ \{ u_k \} $ converges (up to subsequences) strongly in $ H^1_0(B_{R+1}) $ to some $ \overline{u} \in H^1_{0,\mathrm{rad}}\!\left(B_R\right) $, which is therefore a positive Sobolev minimizer of \eqref{def-IR}; moreover, taking the limit of \eqref{MR to IR - k} as $k \to \infty$, we find that $ \overline{u} $ also satisfies  
$$
\int_0^{R} \overline{u}^{q+1} \, \psi^{n-1} \, dr  = I_{R}^{2\frac{q+1}{q-1}} \, .
$$
Hence, still by Claim 2, it follows that $ \overline{u}=u_R $, and the limit being independent of the chosen subsequence, the whole sequence $ \{ u_k \} $ converges to $u_R$. Finally we observe that, by elliptic regularity, the sequence $ \{ u_k\} $ is in fact locally uniformly H\"older continuous, whence 
$$
\lim_{k \to \infty} u_k(0) = u_R(0) \, ,
$$
namely the desired continuity of $A_R$. Indeed, the classical Calder\'on-Zygmund theory ensures that $ u_k \in W^{2,r}\!\left(B_{S}\right) $ for $r=(q+1)/q$ and all $S \in (0,R)$ (with uniform estimates in~$k$); then, after a finite number of iterations, we can infer that $ u_k \in C^{\beta}\!\left(B_{S}\right) $ for some $ \beta \in (0,1) $, still uniformly in $k$.
 
\medskip
\noindent \textbf{Claim 5:} \it The function $A(R)$ defined in \eqref{def-AR} is a bijection of $ (0,+\infty) $ onto itself.\rm 

\smallskip
\noindent Owing to Claim 4, it is enough to show that $A(R)$ is injective and that
\beq\label{lim-A}
\lim_{R \to 0} A(R) = +\infty \qquad \text{and} \qquad \lim_{R \to +\infty} A(R) = 0 \, .
\eeq
Injectivity is a direct consequence of the well-posedness (in particular, local uniqueness) of the ODE Cauchy problem \eqref{pb Cauchy}: if $ A(R_1) = A(R_2) $ for two different radii $0 < R_1 < R_2 $, then this would necessarily imply $ u_{R_1}(r) = u_{R_2}(r) $ for every $ r \in [0,R_1) $; however, since $ u_{R_1}(R_1) = 0 $ and $ u_{R_1}'(R_1)<0 $, the solution $u_{R_2}$ would necessarily change sign at $ r=R_1 $, which contradicts its positivity in $ [0,R_2) $. Hence, we have that $ A(R) $ is strictly monotone, so that the limits in \eqref{lim-A} do exist. As concerns the left one, it follows from the left identity in \eqref{lim-IR} along with \eqref{MR to IR} and the fact that $u_R$ is radially decreasing:
$$
A(R)^{q+1} \ge \frac{\int_0^R u_R^{q+1} \, \psi^{n-1} \, dr}{\int_0^R \psi^{n-1} \, dr}  = \frac{I_R^{2 \frac{q+1}{q-1}}}{ \int_0^R \psi^{n-1} \, dr } \rightarrow + \infty \qquad \text{as } R \to 0 \, .
$$
As a byproduct, we infer that $A(R)$ is decreasing: therefore, $ \{ u_R \} $ stays uniformly bounded as $R \to +\infty$. Note that, by combining \eqref{MR to IR}, \eqref{sobo-mass-2}, and the right identity in \eqref{lim-IR}, we find  
$$
\int_0^R \left| u_R' \right|^2 \psi^{n-1} \, dr = I_R^{2 \frac{q+1}{q-1}} \rightarrow 0 \qquad \text{as } R \to +\infty \, .
$$
In particular, along subsequences, it follows that $ u'_R \to 0 $ almost everywhere. Now, recalling the integrated form of the equation  \eqref{id-1}, we may deduce that
\beq\label{int-zero}
\lim_{R \to +\infty} \int_0^r u_R^q \, \psi^{n-1} \, ds = 0 \qquad \text{for a.e. } r>0 \, .
\eeq
Integrating further  \eqref{id-1} from $0$ to a fixed arbitrary $r>0$, we end up with
\beq\label{int-zero-2}
A(R) = u_R(r) +  \int_0^r \frac{\int_0^s u_R^q \, \psi^{n-1} \, dt}{\psi^{n-1}(s)} \, ds  \, .
\eeq
Using \eqref{int-zero} and the uniform boundedness of $ \{ u_R \} $, we can make sure that the integral in \eqref{int-zero-2} vanishes as $ R \to +\infty $. On the other hand, still \eqref{int-zero} guarantees that $ u_R \to 0 $ almost everywhere, up to subsequences; hence, by letting $ R \to +\infty $ in \eqref{int-zero-2} along any such subsequence, we obtain the right limit in \eqref{lim-A}.

\medskip
\noindent \textbf{Conclusion.} The global positive radial solution $u$ to \eqref{loc-dir} necessarily satisfies the local Cauchy problem \eqref{pb Cauchy} for some $a>0$. However, from Claim 5, we know that there exists some $R_\ast>0$ such that $ u_{R_\ast}(0)=a $, so that $ u=u_{R_*} $ in $ [0,R_*) $, but this is inconsistent with the positivity of $u$. A contradiction having been achieved, the thesis is proved.
\end{proof}

\section*{Acknowledgments} 
The authors thank the INdAM-GNAMPA research group. 
A. De Luca and N. Soave are partially supported by the INdAM-GNAMPA 2025 Project ``PDE ellittiche che degenerano su variet\`{a} di dimensione bassa e frontiere libere molto sottili'', CUP E5324001950001. N. Soave is partially supported by the PRIN Project no.~2022R537CS ``$NO^3$ -- Nodal optimization, NOnlinear elliptic equations, NOnlocal geometric problems, with a focus on regularity'', CUP D53D23005930006. M. Muratori is partially supported by the INdAM-GNAMPA 2025 Project ``Propriet\`a qualitative e regolarizzanti di equazioni ellittiche e paraboliche'', CUP E5324001950001, and by the PRIN Project no.~2022SLTHCE ``GAMPA -- Geometric-Analytic Methods for PDEs and Applications'', CUP E53D23005880006. Both of the PRIN grants are funded by the European Union - Next Generation EU within the PRIN 2022 program (D.D. 104 - 02 02 2022 Ministero dell'Universit\`a e della Ricerca, Italy).


\begin{bibdiv}
\begin{biblist}

\bib{AlDaGe}{article}{
      author={Almeida, Lu\'{\i}s},
      author={Damascelli, Lucio},
      author={Ge, Yuxin},
       title={A few symmetry results for nonlinear elliptic {PDE} on noncompact
  manifolds},
        date={2002},
        ISSN={0294-1449},
     journal={Ann. Inst. H. Poincar\'{e} Anal. Non Lin\'{e}aire},
      volume={19},
      number={3},
       pages={313\ndash 342},
         url={https://doi.org/10.1016/S0294-1449(01)00091-9},
      review={\MR{1956953}},
}

\bib{BeFeGr}{article}{
      author={Berchio, Elvise},
      author={Ferrero, Alberto},
      author={Grillo, Gabriele},
       title={Stability and qualitative properties of radial solutions of the
  {L}ane-{E}mden-{F}owler equation on {R}iemannian models},
        date={2014},
        ISSN={0021-7824},
     journal={J. Math. Pures Appl. (9)},
      volume={102},
      number={1},
       pages={1\ndash 35},
         url={https://doi.org/10.1016/j.matpur.2013.10.012},
      review={\MR{3212246}},
}

\bib{BG}{article}{
      author={Boccardo, Lucio},
      author={Gallou\"et, Thierry},
       title={Nonlinear elliptic and parabolic equations involving measure
  data},
        date={1989},
        ISSN={0022-1236,1096-0783},
     journal={J. Funct. Anal.},
      volume={87},
      number={1},
       pages={149\ndash 169},
         url={https://doi.org/10.1016/0022-1236(89)90005-0},
      review={\MR{1025884}},
}

\bib{BoGaGrVa}{article}{
      author={Bonforte, Matteo},
      author={Gazzola, Filippo},
      author={Grillo, Gabriele},
      author={V\'{a}zquez, Juan~Luis},
       title={Classification of radial solutions to the {E}mden-{F}owler
  equation on the hyperbolic space},
        date={2013},
        ISSN={0944-2669},
     journal={Calc. Var. Partial Differential Equations},
      volume={46},
      number={1-2},
       pages={375\ndash 401},
         url={https://doi.org/10.1007/s00526-011-0486-8},
      review={\MR{3016513}},
}

\bib{BaKr}{article}{
      author={Balogh, Zolt{\'a}n~M.},
      author={Krist{\'a}ly, Alexandru},
       title={Sharp isoperimetric and {Sobolev} inequalities in spaces with
  nonnegative {Ricci} curvature},
    language={English},
        date={2023},
        ISSN={0025-5831},
     journal={Math. Ann.},
      volume={385},
      number={3-4},
       pages={1747\ndash 1773},
}

\bib{BS}{article}{
      author={Bianchi, Davide},
      author={Setti, Alberto~G.},
       title={Laplacian cut-offs, porous and fast diffusion on manifolds and
  other applications},
        date={2018},
        ISSN={0944-2669,1432-0835},
     journal={Calc. Var. Partial Differential Equations},
      volume={57},
      number={1},
       pages={Paper No. 4, 33},
         url={https://doi.org/10.1007/s00526-017-1267-9},
      review={\MR{3735744}},
}

\bib{CiFaPo}{misc}{
      author={Ciraolo, Giulio},
      author={Farina, Alberto},
      author={Polvara, Camilla~Chiara},
       title={Classification results, rigidity theorems and semilinear {PDEs}
  on {Riemannian} manifolds: a {P}-function approach},
         how={Preprint, {arXiv}:2406.13699 [math.{AP}] (2024)},
        date={2024},
         url={https://arxiv.org/abs/2406.13699},
}

\bib{CaGiSp}{article}{
      author={Caffarelli, Luis~A.},
      author={Gidas, Basilis},
      author={Spruck, Joel},
       title={Asymptotic symmetry and local behavior of semilinear elliptic
  equations with critical {S}obolev growth},
        date={1989},
        ISSN={0010-3640},
     journal={Comm. Pure Appl. Math.},
      volume={42},
      number={3},
       pages={271\ndash 297},
         url={https://doi.org/10.1002/cpa.3160420304},
      review={\MR{982351}},
}

\bib{CheLi}{article}{
      author={Chen, Wenxiong},
      author={Li, Congming},
       title={Classification of solutions of some nonlinear elliptic
  equations},
    language={English},
        date={1991},
        ISSN={0012-7094},
     journal={Duke Math. J.},
      volume={63},
      number={3},
       pages={615\ndash 622},
}

\bib{CaMo}{misc}{
      author={Catino, Giovanni},
      author={Monticelli, Dario~Daniele},
       title={Semilinear elliptic equations on manifolds with nonnegative
  {Ricci} curvature. \emph{Preprint 2022.} {J}. {E}ur. {M}ath. {S}oc.,
  \emph{published online first}},
         how={Preprint, {arXiv}:2203.03345 [math.{AP}] (2022)},
        date={2024},
         url={https://arxiv.org/abs/2203.03345},
}

\bib{FoMaMa}{article}{
      author={Fogagnolo, Mattia},
      author={Malchiodi, Andrea},
      author={Mazzieri, Lorenzo},
       title={A note on the critical {Laplace} equation and {Ricci} curvature},
    language={English},
        date={2023},
        ISSN={1050-6926},
     journal={J. Geom. Anal.},
      volume={33},
      number={6},
       pages={17},
        note={Id/No 178},
}

\bib{GaItQu}{article}{
      author={Garc{\'{\i}}a-Meli{\'a}n, Jorge},
      author={Iturriaga, Leonelo},
      author={Quaas, Alexander},
       title={Liouville theorems for radial solutions of semilinear elliptic
  equations},
    language={English},
        date={2019},
        ISSN={1747-6933},
     journal={Complex Var. Elliptic Equ.},
      volume={64},
      number={6},
       pages={933\ndash 949},
}

\bib{GMV}{article}{
      author={Grillo, Gabriele},
      author={Muratori, Matteo},
      author={V\'{a}zquez, Juan~Luis},
       title={The porous medium equation on {R}iemannian manifolds with
  negative curvature. {T}he large-time behaviour},
        date={2017},
        ISSN={0001-8708},
     journal={Adv. Math.},
      volume={314},
       pages={328\ndash 377},
         url={https://doi.org/10.1016/j.aim.2017.04.023},
      review={\MR{3658720}},
}

\bib{GiSp}{article}{
      author={Gidas, B.},
      author={Spruck, J.},
       title={Global and local behavior of positive solutions of nonlinear
  elliptic equations},
    language={English},
        date={1981},
        ISSN={0010-3640},
     journal={Commun. Pure Appl. Math.},
      volume={34},
       pages={525\ndash 598},
}

\bib{GrWu}{book}{
      author={Greene, R.~E.},
      author={Wu, Hung-Hsi},
       title={Function theory on manifolds which possess a pole},
    language={English},
      series={Lect. Notes Math.},
   publisher={Springer, Cham},
        date={1979},
      volume={699},
}

\bib{MuRo}{article}{
      author={Muratori, Matteo},
      author={Roncoroni, Alberto},
       title={Sobolev-type inequalities on {Cartan}-{Hadamard} manifolds and
  applications to some nonlinear diffusion equations},
    language={English},
        date={2022},
        ISSN={0926-2601},
     journal={Potential Anal.},
      volume={57},
      number={1},
       pages={129\ndash 154},
}

\bib{MRS}{book}{
      author={Mastrolia, Paolo},
      author={Rigoli, Marco},
      author={Setti, Alberto~G.},
       title={Yamabe-type equations on complete, noncompact manifolds},
      series={Progress in Mathematics},
   publisher={Birkh\"auser/Springer Basel AG, Basel},
        date={2012},
      volume={302},
        ISBN={978-3-0348-0375-5},
         url={https://doi.org/10.1007/978-3-0348-0376-2},
      review={\MR{2962687}},
}

\bib{MaSa}{article}{
      author={Mancini, Gianni},
      author={Sandeep, Kunnath},
       title={On a semilinear elliptic equation in {{\(\mathbb H^{n}\)}}},
    language={English},
        date={2008},
        ISSN={0391-173X},
     journal={Ann. Sc. Norm. Super. Pisa, Cl. Sci. (5)},
      volume={7},
      number={4},
       pages={635\ndash 671},
}

\bib{MuSo}{article}{
      author={Muratori, Matteo},
      author={Soave, Nicola},
       title={Some rigidity results for {Sobolev} inequalities and related
  {PDEs} on {Cartan}-{Hadamard} manifolds},
    language={English},
        date={2023},
        ISSN={0391-173X},
     journal={Ann. Sc. Norm. Super. Pisa, Cl. Sci. (5)},
      volume={24},
      number={2},
       pages={751\ndash 792},
}

\bib{MuSoSys}{article}{
      author={Muratori, Matteo},
      author={Soave, Nicola},
       title={The {L}ane-{E}mden system on {C}artan-{H}adamard manifolds:
  asymptotics and rigidity of radial solutions},
        date={2024},
        ISSN={1073-7928,1687-0247},
     journal={Int. Math. Res. Not. IMRN},
      number={12},
       pages={9910\ndash 9935},
         url={https://doi.org/10.1093/imrn/rnae079},
      review={\MR{4761784}},
}

\bib{NiNu}{article}{
      author={Ni, Wei-Ming},
      author={Nussbaum, Roger~D.},
       title={Uniqueness and nonuniqueness for positive radial solutions of
  {$\Delta u+f(u,r)=0$}},
        date={1985},
        ISSN={0010-3640,1097-0312},
     journal={Comm. Pure Appl. Math.},
      volume={38},
      number={1},
       pages={67\ndash 108},
         url={https://doi.org/10.1002/cpa.3160380105},
      review={\MR{768105}},
}

\bib{OK}{book}{
      author={Opic, B.},
      author={Kufner, A.},
       title={Hardy-type inequalities},
      series={Pitman Research Notes in Mathematics Series},
   publisher={Longman Scientific \& Technical, Harlow},
        date={1990},
      volume={219},
        ISBN={0-582-05198-3},
      review={\MR{1069756}},
}

\bib{QS}{book}{
      author={Quittner, Pavol},
      author={Souplet, Philippe},
       title={Superlinear parabolic problems},
      series={Birkh\"{a}user Advanced Texts: Basler Lehrb\"{u}cher.
  [Birkh\"{a}user Advanced Texts: Basel Textbooks]},
   publisher={Birkh\"{a}user Verlag, Basel},
        date={2007},
        ISBN={978-3-7643-8441-8},
        note={Blow-up, global existence and steady states},
      review={\MR{2346798}},
}

\bib{RuSt}{article}{
      author={Rupflin, Melanie},
      author={Struwe, Michael},
       title={Supercritical elliptic equations},
    language={English},
        date={2012},
        ISSN={1536-1365},
     journal={Adv. Nonlinear Stud.},
      volume={12},
      number={4},
       pages={877\ndash 887},
}

\end{biblist}
\end{bibdiv}

\end{document}